\let\mathcal\mathscr
\def\B{{\mathbb B}}
\def\Z{{\mathbb Z}}
\def\N{{\mathbb N}} 
\def\P{{\mathbb P}}
\def\Q{{\mathbb Q}}
\def\C{{\mathbb C}}
\def\ev{\mathop{\rm ev}\nolimits}
\def\codim{\mathop{\rm codim}\nolimits}
\def\Pic{\mathop{\rm Pic}\nolimits}
\def\tilde{\widetilde}
\def\phi{\varphi}
\def\cC{{\mathcal C}}
\def\cH{{\mathcal H}}
\def\cM{{\mathcal M}}
\def\cO{{\mathcal O}}
\def\cX{{\mathcal X}}
\def\cY{{\mathcal Y}}
\def\Pic{\mathop{\rm Pic}\nolimits}
\def\dim{\mathop{\rm dim}\nolimits}
\def\div{\mathop{\rm div}\nolimits}
\newcommand{\Bl}{{\mathrm{Bl}}}
\newcommand{\ra}{\rightarrow}
\theoremstyle{plain}
\newtheorem{thm}{Theorem}[section]
\newtheorem{claim}[thm]{Claim}
\newtheorem{example}[thm]{Example}
\theoremstyle{definition}
\newtheorem{definition}[thm]{Definition}
\newtheorem{cor}[thm]{Corollary}
\newtheorem{corollary}[thm]{Corollary}
\newtheorem{prop}[thm]{Proposition}
\newtheorem{proposition}[thm]{Proposition}
\newtheorem{lem}[thm]{Lemma}
\newtheorem{lemma}[thm]{Lemma}
\theoremstyle{remark}
\newtheorem{rmk}[thm]{Remark}
\newtheorem{remark}[thm]{Remark}
\newtheorem{setting}[thm]{Setting}
\newcommand{\abs}[1]{{\left|#1\right|}}
\newcommand{\Bs}{{\operatorname{Bs}}}
\newcommand{\ch}{{\rm ch}}
\newcommand{\chern}{{\rm c}}
\newcommand{\Def}{{\operatorname{Def}}}
\newcommand{\Hdg}{\operatorname{Hdg}}
\newcommand{\id}{{\rm id}}
\newcommand{\lt}{{\rm{lt}}}
\newcommand{\NS}{\operatorname{NS}}
\renewcommand{\O}{{\rm O}}
\newcommand{\ol}[1]{{\overline{#1}}}
\newcommand{\ratl}{\dashrightarrow}
\newcommand{\reg}{{\operatorname{reg}}}
\newcommand{\rk}{{\rm rk}}
\newcommand{\sing}{{\operatorname{sing}}}
\renewcommand{\to}[1][]{\xrightarrow{\ #1\ }}
\newcommand{\tensor}{\otimes}
\newcommand{\tf}{{\rm tf}}
\newcommand{\vphi}{\varphi}
\newcommand{\wt}[1]{{\widetilde{#1}}}
\newcommand{\sC}{{\mathcal C}}
\newcommand{\sD}{{\mathcal D}}
\newcommand{\sH}{{\mathcal H}}
\newcommand{\sJ}{{\mathcal J}}
\newcommand{\sM}{{\mathcal M}}
\newcommand{\sS}{{\mathcal S}}
\newcommand{\sX}{{\mathcal X}}
\newcommand{\sY}{{\mathcal Y}}
\newcommand{\gothv}{{\mathfrak v}}
\newcommand{\gothM}{{\mathfrak M}}
\title[Deformations of rational curves on primitive symplectic varieties]{Deformations of rational curves on primitive symplectic varieties and applications}
\author{Christian Lehn}
\address{Christian Lehn\\ Fakult\"at f\"ur Mathematik\\ Technische Universit\"at Chemnitz\\
Reichenhainer Stra\ss e 39, 09126 Chemnitz, Germany}
\email{christian.lehn@mathematik.tu-chemnitz.de}
\author{Giovanni Mongardi}
\address{Giovanni Mongardi\\ Alma Mater studiorum Universit\'a di Bologna
Dipartimento di Matematica,
Piazza di Porta San Donato 5,
Bologna, 40126 Italia}
\email{giovanni.mongardi2@unibo.it}
\author{Gianluca Pacienza}
\address{Gianluca Pacienza\\ Institut Elie Cartan de Lorraine,
Universit\'e de Lorraine,
B.P. 70239, F-54506 Vandoeuvre-l\'es-Nancy Cedex
 France}
\email{gianluca.pacienza@univ-lorraine.fr}
\begin{document}

%
\begin{abstract}
We study the deformation theory of rational curves on primitive symplectic varieties and show that if the rational curves cover a divisor, then, as in the smooth case, they deform along their Hodge locus in the universal locally trivial deformation. As applications, we extend Markman's deformation invariance of prime exceptional divisors along their Hodge locus to this singular framework and provide existence results for uniruled ample divisors on primitive symplectic varieties which are locally trivial deformations of any moduli space of semistable objects on a projective $K3$ or fibers of the Albanese map of those on an abelian surface. We also present an application to the existence of prime exceptional divisors.
\end{abstract}
%
%
%
%

\makeatletter
\@namedef{subjclassname@2020}{
	\textup{2020} Mathematics Subject Classification}
\makeatother

\subjclass[2020]{14J42, 14D20 (primary), 14D15, 14J10, 14J17 (secondary).}
\keywords{Rational curves; deformation theory; primitive symplectic varieties; moduli spaces of sheaves; Bridgeland stability conditions.}






\maketitle
{\let\thefootnote\relax
\footnotetext{
CL was supported by the DFG through the research grants Le 3093/2-2 and  Le 3093/3-1. GM is part of INdAM research group ``GNSAGA'' and was partially supported by it. GP was partially supported by the Projet ANR-16-CE40-0008  ``Foliage''. 
}}
\numberwithin{equation}{section}

%
\section{Introduction}
%

In recent years, ``singular'' symplectic varieties have attracted increasing attention. 
On the one hand, this is certainly due to the fact that \emph{irreducible} symplectic varieties appear as factors of the singular Beauville--Bogomolov decomposition for mildly singular K\"ahler spaces with trivial first Chern class, see \cite{GKP16,Dru18, GGK19,HP19,Ca20,BGL20}. On the other hand, in many fundamental aspects of the theory (deformation theory, lattice structure, projectivity, Torelli theorems etc.) even the larger class of \emph{primitive} symplectic varieties  behaves very similarly to their smooth analogs, see \cite{BL16, BL18, Men20}. It is worthwhile pointing out that both these classes of symplectic varieties coincide in the smooth case by a recent theorem of Schwald \cite{Sch20}, see Section~\ref{section symplectic varieties} for precise definitions and a more detailed discussion.

Moduli spaces of (semi)stable sheaves (with respect to a generic polarization) on $K3$ or abelian surfaces play a prominent r\^ole in the theory of {\it smooth} irreducible symplectic varieties, as all known examples of such varieties arise as deformations of moduli space of sheaves on those surfaces or of (crepant) desingularizations thereof. 
Recently, Perego and Rapagnetta showed in \cite{PR18} that ``singular'' moduli spaces of stable sheaves (with respect to a generic polarization) on projective $K3$ or abelian surfaces are irreducible symplectic varieties (see below for more details). Together with quotients by finite groups of symplectic automorphisms (see \cite[Proposition 2.4]{Bea00}), ``singular'' moduli space of stable sheaves then provide a vast source of examples of singular symplectic varieties.

A recent and very active line of research concerns rational curves on irreducible symplectic varieties, either in relation to their birational geometry (see e.g. \cite{BM14mmp, BM14projectivity, BHT15, AV15, AV17, AV19}) or to study their Chow groups (see e.g. \cite{Vo16, CMP19, MP, MPcorr, LP15, zal, shen2017derived, shen2017k3}). So far, these investigations have mainly been carried out in the smooth case and the purpose of the present paper is to initiate the systematic study of rational curves on singular symplectic varieties, a task which seems both natural and relevant.

There are three different main contexts giving rise to rational curves on symplectic varieties: degenerate fibers of Lagrangian fibrations, prime exceptional divisors, uniruled members of ample (or more generally movable) linear systems. These correspond to divisors of zero, negative, respectively positive square with respect to the Beauville--Bogomolov--Fujiki form, see below. It is also in complete analogy to what happens on $K3$ surfaces where the aforementioned contexts correspond to elliptic fibrations (obtained from square zero curves), to $(-2)$-curves, or generalize the Bogomolov--Mumford theorem.

While the definition of irreducible symplectic varieties is more involved (see Definition~\ref{def:ISV}), the geometric features of \emph{primitive} symplectic varieties are mainly their vanishing irregularity and the uniqueness of the symplectic form up to scalars (see Definition~\ref{def:PSV}). Our applications concern rational curves with non-zero square on primitive symplectic varieties. 
The deformation theory of rational curves is one of the key tools to deduce general results from special cases.   The main technical contribution of the present note extends to the most general singular, not necessarily projective setting a result established in \cite{AV15, CMP19} in the smooth case. 
   
\begin{thm}\label{thm:uniruleddivisor}
Let $X$  be a  primitive symplectic variety of dimension $2n$ and $f:C\to X$  a genus zero stable map. Let $M$ be an irreducible component of the space $\overline{{ M_0}}( X, f_*[C])$ of genus zero stable maps containing $[f]$ and suppose that the deformations of $f$ parametrized by $M$ cover a divisor $D\subset X$.
Then the following hold:
\begin{enumerate}
\item There is a unique irreducible component $\mathcal M$ of  the space $\overline{\mathcal{ M}_0}(\mathcal X/S, f_*[C])$ of relative genus zero stable maps in the local universal family $\mathcal X\to S:= \Def^\lt(X)$ of locally trivial deformations of $X$ that contains $M$. Moreover, $\mathcal M$ is smooth at the general point of $M$ and dominates the Hodge locus $B\subset S$ where the class $f_*[C]$ remains algebraic. 
\item For any point $b$ of $B$, the fiber $\mathcal X_b$ contains a uniruled divisor $D_b$ covered by the deformations of $f$ in $\mathcal X_b$. If furthermore $D_b$ is $\Q$-Cartier, then
its cohomology class is proportional to the dual of $f_*[C]$. 
\end{enumerate}
\end{thm}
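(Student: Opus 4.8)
The plan is to adapt the smooth-case argument of \cite{AV15, CMP19} to the smooth base $S=\Def^{\lt}(X)$, dealing with the singularities of $X$ and with the fact that one works with locally trivial deformations. I first reduce to a general point $[f]$ of $M$: the domain is then a smooth rational curve $C$, the stable map is generically immersed, and since $\Sing(X)$ has codimension $\ge 2$ the curves of $M$ meeting $\Sing(X)\cap D$ form a subfamily of dimension $<\dim M$, so the general member is disjoint from $\Sing(X)$; on such a $C$ the reflexive symplectic form $\sigma$ induces an isomorphism $f^{*}\mathcal T_{X}\cong f^{*}\Omega^{[1]}_{X}$ and the usual deformation theory of stable maps applies. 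Since the deformations parametrized by $M$ cover the divisor $D$, a computation with $\sigma$ (using as in the smooth case that $\sigma$ carries $T_{C}$ into the conormal sheaf $N_{f}^{\vee}$, so that the $\sigma$-orthogonal rank-$(2n-2)$ subsheaf of the normal sheaf $N_{f}$ is self-dual and, for the general member, nef) gives that $M$ is smooth of dimension $2n-2$ at $[f]$, with $T_{[f]}M=H^{0}(C,N_{f})$, and, by Riemann--Roch, $h^{1}(C,f^{*}\mathcal T_{X})=h^{1}(C,N_{f})=1$.

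The heart of the matter is the identification of the sideways obstruction. Relative deformation theory of stable maps over $S$ gives a linear map $o\colon T_{[X]}S\cong H^{1}(X,\mathcal T_{X})\to H^{1}(C,f^{*}\mathcal T_{X})$, namely $v\mapsto f^{*}v$, whose kernel is the image of the differential of $\overline{\mathcal M_0}(\mathcal X/S,f_{*}[C])\to S$ at $[f]$, the obstruction space of $\overline{\mathcal M_0}(\mathcal X/S,f_{*}[C])$ at $[f]$ being the one-dimensional $H^{1}(C,f^{*}\mathcal T_{X})$. Using $\sigma$ I would compose $H^{1}(C,f^{*}\mathcal T_{X})\xrightarrow{\ \sigma\ }H^{1}(C,f^{*}\Omega^{[1]}_{X})\xrightarrow{\ (df)^{\vee}\ }H^{1}(C,\omega_{C})\cong\C$, where $(df)^{\vee}$ is the codifferential; the second map is nonzero, being Serre dual to $H^{0}(C,\mathcal O_{C})\to H^{0}(C,f^{*}\mathcal T_{X}\otimes\omega_{C})$, $1\mapsto df$, so the composite is an isomorphism of one-dimensional spaces. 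Under it $o$ becomes the linear form $v\mapsto\int_{C}f^{*}\bar v=q\bigl(\bar v,(f_{*}[C])^{\vee}\bigr)$ with $\bar v=\sigma(v)\in H^{1}(X,\Omega^{[1]}_{X})$, because the composite $H^{1}(X,\Omega^{[1]}_{X})\to H^{1}(C,f^{*}\Omega^{[1]}_{X})\to H^{1}(C,\omega_{C})\cong\C$ is exactly $\bar v\mapsto\int_{C}f^{*}\bar v$. This is the one place where the symplectic structure is essential: closedness and uniqueness up to scalar of $\sigma$ turn the curve-theoretic obstruction into the Hodge-theoretic pairing defining $B$. Hence $\ker o=T_{[X]}B$ and $o$ has rank one, $B$ being a codimension-one locus, smooth at $[X]$, by the period theory for locally trivial deformations of primitive symplectic varieties.

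I would then pass from first order to smoothness by restricting the relative moduli problem to $\mathcal X_{B}\to B$, whose base is smooth. At the general $[f]\in M$ the tangent space of $\overline{\mathcal M_0}(\mathcal X_{B}/B,f_{*}[C])$ is an extension of $T_{[X]}B=\ker o$ by $T_{[f]}M=H^{0}(C,N_{f})$, of dimension $\dim M+\dim B$; and for a small extension $A'\twoheadrightarrow A$ of Artinian bases over $B$ the obstruction to lifting a deformation lies in $H^{1}(C,f^{*}\mathcal T_{X})\otimes I$, but its image under the above isomorphism $H^{1}(C,f^{*}\mathcal T_{X})\cong\C$ is, by the same identification, the obstruction to $f_{*}[C]$ remaining of Hodge type over $\Spec A'$, which vanishes because $\Spec A'$ maps to $B$; so the obstruction itself vanishes and $\overline{\mathcal M_0}(\mathcal X_{B}/B,f_{*}[C])$ is smooth of dimension $\dim M+\dim B$ at $[f]$. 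Since Hodge loci are closed analytic subsets, $\overline{\mathcal M_0}(\mathcal X/S,f_{*}[C])\to S$ factors through $B$, so this is already the whole relative moduli space; being smooth at the general point of the irreducible $M$ it contains a unique irreducible component $\mathcal M\supset M$, and since the differential of $\mathcal M\to B$ is surjective there, $\mathcal M$ dominates $B$. This gives (1).

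For (2), let $\mathcal D$ be the (proper, irreducible) image of the universal curve over $\mathcal M$ under evaluation; it surjects onto $B$, over $[X]$ it contains $D$, and as evaluation is generically finite on the universal curve, $\dim\mathcal D=\dim\mathcal M+1=\dim\mathcal X_{B}-1$. Upper semicontinuity of fibre dimension forces every fibre $D_{b}:=\mathcal D_{b}\subset\mathcal X_{b}$ to have dimension $\ge 2n-1$, and being a proper closed subset it is a prime divisor, tautologically uniruled by the images of the rational curves parametrized by $\mathcal M_{b}$. Finally, if $D_{b}$ is $\Q$-Cartier then $[D_{b}]\in H^{2}(\mathcal X_{b},\Q)$ makes sense, and parallel transport of second cohomology along $B$ preserves the Beauville--Bogomolov--Fujiki form and the everywhere-Hodge flat class $f_{*}[C]$, as well as the class of the flat family $\mathcal D$ where it is $\Q$-Cartier, so the proportionality $[D_{b}]\in\Q\cdot(f_{*}[C])^{\vee}$ reduces to the corresponding statement on $X$ itself, namely that a prime divisor swept out by a covering family of curves of fixed class has cohomology class proportional to the dual of that class. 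The step I expect to be delicate is precisely the identification of $o$ — and of the higher-order obstructions over $B$ — with the Hodge-theoretic pairing: this is a semiregularity-type statement that has to be carried out in the locally trivial, possibly singular and non-projective setting; granting it, the dimension counts and the geometry of part (2) are routine.
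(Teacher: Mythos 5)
There is a genuine gap at the very first step, and it is precisely the point the paper singles out as the main difficulty of the singular case. You claim that, because $\Sing(X)$ has codimension $\ge 2$, the curves of $M$ meeting $\Sing(X)\cap D$ form a subfamily of dimension $<\dim M$, so that the general ruling curve avoids $\Sing(X)$ and the smooth-case deformation theory applies verbatim. This dimension count is wrong: $\Sing(X)$ may have codimension exactly $2$ in $X$, hence codimension $1$ in $D$, and then the incidence variety $\{(t,x)\,:\,x\in C_t\cap\Sing(X)\}$ can perfectly well dominate $M$, i.e.\ \emph{every} curve of the ruling may meet $\Sing(X)$ (think of a divisor in a quotient of a hyperk\"ahler manifold by a symplectic involution, swept out by curves through the image of the codimension-two fixed locus). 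What is actually true --- and already requires a nontrivial argument with the symplectic form, namely Theorem~\ref{theorem:term} --- is only that the ruling curves cannot all meet a subvariety of codimension $\ge 3$; the codimension-$2$ part of $\Sing(X)$ must be handled separately. The paper does this by combining the local structure of symplectic singularities in codimension $2$ (transversal ADE, Proposition~\ref{prop:kaledin}) with the fact that the functorial simultaneous resolution is \emph{crepant} over that locus, so that the lifted curve $g$ satisfies $\deg g^*K_Y=0$ and the expected-dimension count of Proposition~\ref{prop:defcurves} still closes. Without a substitute for Theorem~\ref{theorem:term} and this crepancy argument, your reduction to the smooth case --- and with it the identification $f^*\cT_X\cong f^*\Omega^{[1]}_X$, the computation $h^1(C,f^*\cT_X)=1$, and the entire semiregularity step --- does not get off the ground.

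Beyond this, your route is genuinely different from the paper's: you identify the sideways obstruction $H^1(X,\cT_X)\to H^1(C,f^*\cT_X)$ with the Hodge-theoretic pairing cutting out $B$ (a Ran--Voisin-type semiregularity argument), whereas the paper computes no obstruction map at all; it bounds $\dim_{[f]}$ of the relative moduli space from below on a simultaneous resolution, bounds the fibre dimension from above via the finiteness of ruling curves through a general point of $D$ (Lemma~\ref{prop:voisin}, Proposition~\ref{prop:expdim}), and concludes that $\mathcal M$ dominates $B$ by semicontinuity of fibre dimension. Your semiregularity identification would in any case need to be justified in the locally trivial, singular, non-projective setting (that $H^1(X,\cT_X)$ is the tangent space of $\Def^\lt(X)$, contraction with the reflexive form $\sigma$, comparison of $H^1(X,\Omega_X^{[1]})$ with $H^2(X,\C)$), which you flag but do not supply. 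Finally, the last step of your part (2) is circular: the proportionality of $[D_b]$ with $(f_*[C])^\vee$ does not ``reduce to the corresponding statement on $X$ itself'' --- that statement is exactly what is in question at the special point; the paper proves it by specializing to a very general $b\in B$, where the N\'eron--Severi group of $\mathcal X_b$ is generated by the dual of $f_*[C]$, and transporting back.
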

  
 Here, the dual of a class in $H^2(X,\Q)$ is defined with respect to the Beauville--Bogomolov--Fujiki quadratic form (BBF form), see Definition~\ref{definition dual class}. 
The key (and at first sight maybe somewhat surprising) point in the proof of the above result can most handily be formulated for a primitive symplectic variety $X$ with \emph{terminal singularities}: in this case, the general rational curve ruling a divisor does not meet the singular locus of $X$ (cf. Corollary~\ref{corollary term}). In the projective case, it is standard to reduce to the terminal case via a $\Q$-factorial terminalization, whose existence is not known in the non-projective case. Instead, we use the  functorial resolution of Bierstone--Milman and Villamyor \cite{BM97,Vil89} 
in families and the local structure of symplectic singularities. 

 Theorem \ref{thm:uniruleddivisor} allows us first to generalize to the singular setup the following result concerning prime exceptional divisors (i.e. prime $\Q$-Cartier divisors whose square with respect to the BBF form is negative), which is due to Markman \cite{Mar13} in the smooth case. 
   
\begin{thm}\label{thm:exc-def}
Let $X$ be a projective and $\Q$-factorial primitive symplectic variety. 
Let $E\subset X$ be a prime {exceptional} divisor. Then the following holds.
\begin{enumerate}
\item[(1)] The divisor $E$ is contractible on a birational  $\Q$-factorial primitive symplectic variety $X'$ which is a locally trivial deformation of $X$. In particular, $E$ is uniruled and the contraction of its strict transform on $X'$ determines a distinguished ruling. The dual $E^\vee$ is proportional to the class of a general curve $R$ of this ruling, which is either a smooth rational curve or a union of two smooth rational curves meeting transversally in a single point. Moreover, either $\frac{1}{2}[E]$ or $[E]$ is a primitive class of $H^2(X,\Z)$.
\item[(2)] 
\begin{itemize}
\item[(a)] There is  a flat family of divisors over the Hodge locus $\Hdg_{[E]}(X)$ which specializes to a multiple of $E$ at the origin. In particular, $[E]$ is $\Q$-effective over all its Hodge locus. 
\item[(b)] There exists a non-empty open subset of the Hodge locus $\Hdg_{[E]}(X)$ over which $E$ deforms to a prime exceptional divisor. 
\end{itemize}
\end{enumerate}
\end{thm}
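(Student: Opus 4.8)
The plan is to deduce the statement from Theorem~\ref{thm:uniruleddivisor} together with the structure theory of primitive symplectic varieties and the deformation theory of the prime exceptional divisor $E$. I begin with part (1). Since $E$ is a prime $\Q$-Cartier divisor with $q(E)<0$, the class $E$ fails to be movable, and by the analogue of Markman's divisorial contraction criterion for $\Q$-factorial primitive symplectic varieties one finds a locally trivial deformation $X'$ of $X$ on which the strict transform of $E$ becomes contractible: concretely, one runs a minimal model program, or rather uses the Hodge-theoretic characterisation of the movable/birational cones (Men\-ge\-lat\-ti's and Bakker--Lehn's Torelli-type results) to produce a wall of the movable cone supported by $E^\vee$, and then a face contraction over a point $b$ in the Hodge locus of $[E]$ where $[E]^\vee$ becomes extremal. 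The divisorial contraction $X'\to Y$ then exhibits $E$ (its strict transform) as a divisor covered by the curves $R$ contracted by this morphism. Local-analytic models of symplectic divisorial contractions — which are étale-locally products of an ADE-surface singularity contraction with a polydisc, exactly as in the smooth case studied by Wierzba--Wi\'sniewski and Markman — force $R$ to be either a smooth $\PP^1$ or a chain of two such $\PP^1$'s meeting transversally at one point, and give the dual class statement $E^\vee \propto [R]$ by the projection formula applied to the contraction. The primitivity of $[E]$ or $\tfrac12[E]$ comes from the analysis of the lattice $H^2(X,\Z)$ together with the fact that a reflection in $E^\vee$ must be an integral isometry: the two cases correspond to whether $(E,E)/(R,R)$ equals $1$ or $2$, precisely as in \cite{Mar13}.

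For part (2)(a): the contraction in (1) realizes $E'\subset X'$ as a uniruled divisor swept out by a family of genus zero stable maps $f:R\to X'$. Apply Theorem~\ref{thm:uniruleddivisor} to $(X',f)$. Its conclusion (1) gives a component $\mathcal M$ of relative stable maps over $\Def^\lt(X')=\Def^\lt(X)=S$ dominating the Hodge locus $B$ of the class $f_*[R]$; since $f_*[R]$ is proportional to $(E')^\vee = (E)^\vee$ (identifying $H^2$ of deformations by local triviality and the period map), $B$ coincides with $\Hdg_{[E]}(X)$. Conclusion (2) of Theorem~\ref{thm:uniruleddivisor} then produces, for every $b\in B$, a uniruled divisor $D_b\subset \mathcal X_b$ swept by these deformed rational curves, and these fit into a flat family $\mathcal D\to B$ whose central fibre is supported on $E$; flatness over $B$ (or a suitable étale/finite cover) is obtained from properness of the relative stable map space and taking the image cycle, and the central fibre is a multiple $mE$ because the generic curve of the ruling sweeps $E$ with some multiplicity $m$. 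Hence $[E]$ is $\Q$-effective throughout $\Hdg_{[E]}(X)$.

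For part (2)(b): one must upgrade "there is an effective divisor in class $m[E_b]$" to "$E$ deforms to a \emph{prime exceptional} divisor" on an open subset of the Hodge locus. Here the key points are: (i) semicontinuity of $q$ shows $q([D_b])<0$ on all of $B$, so any component of $D_b$ of negative square is a candidate; (ii) on a dense open subset $B^\circ\subset B$ the Néron--Severi group has rank one (the class $[E_b]$ being the only algebraic class up to $\Q$), so $D_b$ must be a \emph{rational multiple of a single prime divisor} $E_b$, which is then automatically $\Q$-Cartier by $\Q$-factoriality of the very general locally trivial deformation (deformations of $\Q$-factorial primitive symplectic varieties are $\Q$-factorial along a dense subset, a statement of Bakker--Lehn), and has $q(E_b)<0$, i.e. is prime exceptional. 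The main obstacle is exactly this last step: controlling the \emph{irreducibility} of the limiting divisor, and ensuring $\Q$-factoriality persists, on a genuinely open (not merely very general) subset of $\Hdg_{[E]}(X)$; this is handled by combining the rank-one Néron--Severi locus, the flat family $\mathcal D\to B$ from (2)(a), and the openness of the $\Q$-factoriality locus together with the fact that "being contracted by a birational morphism" is an open condition in the relevant moduli of contractions — once $E_b$ has negative square and is the generator of $\NS$, the contraction criterion of part (1) applies to $X_b$ as well, producing a prime exceptional divisor. \cqfd
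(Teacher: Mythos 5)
Your overall strategy (contract $E$ on a birational model, read off the curve structure from local models, then feed the ruling into Theorem~\ref{thm:uniruleddivisor}) matches the paper's, but several of the steps you wave at are precisely where the paper has to work, and as written they are gaps. In part (1), the paper does not use a Hodge-theoretic description of the movable cone (which is not available for singular primitive symplectic varieties); it produces the contraction by the MMP directed by an ample divisor via \cite[Corollary~1.4.2]{BCHM10}, following \cite{BBP13, Dru11}, and gets $X'$ as a locally trivial deformation from $\Q$-factoriality and \cite[Theorem~6.17]{BL18}. More seriously, your primitivity argument assumes that the reflection in $E^\vee$ is an integral monodromy operator --- exactly the statement the paper says is \emph{not yet known} in this setting and is deferred to later work. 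The paper's argument is elementary: the normal bundle computation gives $E\cdot R=\deg\omega_R=-2$, which bounds the imprimitivity of $[E]$ by $2$.

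In part (2)(a) you assert that the flat limit at the origin ``is a multiple $mE$'' without argument. This is the actual content of the statement: a priori the limit is $mE+F$ with $F$ an extra effective divisor (the paper explicitly warns, in the proof of Theorem~\ref{thm:uniruleddivisor}(2), that the central fibre of the family of stable maps can be reducible and sweep out extra divisors $D_j$). The paper excludes $F\neq 0$ by an intersection computation: for very general $t$ one has $E_t\cdot R_t=\lambda q(R)<0$, while $R\cdot E_0=mR\cdot E+R\cdot F\geq 0$ since the deformations of $R$ cover both $E$ and $F$ --- a contradiction. In part (2)(b) your route yields primality only at the \emph{very general} point (the rank-one N\'eron--Severi locus is a countable intersection of opens, not an open set), and the proposed upgrades (``openness of the $\Q$-factoriality locus'', ``being contracted is an open condition'') do not address reducedness of the swept divisor. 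The paper instead proves that the differential of the evaluation map is injective along $C'$, using the splitting $N_{C'/X'}=\omega_{C'}\oplus\sO_{C'}^{\oplus(2n-2)}$ and a filtration argument as in \cite[Lemma~5.1]{Mar13} (this is also where Theorem~\ref{theorem:term} is needed, to know $ev^*T_{\sX}$ is locally free along $C'$); injectivity of the differential forces the divisor to be reduced, hence prime exceptional, over a genuinely open subset of the Hodge locus.
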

We refer to Definition~\ref{definition uniruled} for the precise notion of a ruling and to Remark~\ref{rmk:uniruled-np} for a discussion about uniqueness. Notice that item (1) does not require to control the deformation theory of the rational curves in the ruling and is essentially contained in \cite[Theorem A and its proof]{BBP13} and \cite[Theorem 3.3]{Dru11}. The above result is one of the ingredients involved in the proof of the fact that reflections in prime exceptional divisors are integral monodromy operators. We refer the reader to \cite{Mar11, Mar13} for details and applications. We plan to return to this topic in subsequent work. 

Moreover, we provide existence results for positive and negative uniruled divisors on primitive symplectic varieties deformation equivalent to a moduli space $M_v(S,\sigma)$  (respectively to a fiber $K_v(S,\sigma)$ of its Albanese map) of semi-stable objects on a projective $K3$ surface $S$ of Mukai vector $v$ which are Bridgeland $\sigma$-semistable with respect to a $v$-generic stability condition (respectively $K_v(S,\sigma)$) on a projective $K3$ (respectively abelian) surface $S$. 
 We refer the reader to Section \ref{ss:mod} for the precise definitions. It is well-known that, when smooth, these moduli spaces are irreducible symplectic manifolds of $K3^{[n]}$ (respectively of $Kum_n$) deformation type. In the singular case, by considering moduli spaces of sheaves of Mukai vector $v$  which are Gieseker $H$-semistable with respect to a $v$-generic polarization (on a projective $K3$, respectively abelian surface $S$)  admitting a crepant resolution, O'Grady discovered the OG10 and OG6 deformation types, cf. \cite{OG99,OG03}. 
In the remaining cases, by the recent results of \cite{PR18}, these moduli spaces are primitive (mostly irreducible) symplectic varieties (again, we refer the reader to Section \ref{ss:mod} for more details and the long history of contributions). 

 Ample uniruled divisors on irreducible symplectic manifolds of $K3^{[n]}$, $Kum_n$ or OG10 deformation types are investigated respectively in \cite{CMP19, MP, B21}. The OG6 deformation type is the object of an ongoing project by Bertini, Grossi, and Onorati. Here we use Theorem \ref{thm:uniruleddivisor} to  show the following. 

\begin{thm}\label{thm:amp}
Let $\mathfrak M$  be any moduli space (cf. Definition~\ref{definition polarized moduli}) of polarized primitive symplectic varieties locally trivially deformation equivalent (as unpolarized varieties) to a fixed moduli space $M_v(S,\sigma)$  of semistable objects on a projective $K3$ surface $S$ of Mukai vector $v$ which are Bridgeland $\sigma$-semistable with respect to a $v$-generic stability condition (or to a fibre $K_v(S,\sigma)$ of its Albanese morphism if $S$ is an abelian surface). 
Then $\mathfrak M$ possesses infinitely many connected components whose points correspond to polarized primitive symplectic varieties all containing an ample uniruled divisor proportional to the polarization.
\end{thm}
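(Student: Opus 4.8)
The plan is to reduce the statement to the existence, on a single good representative, of an ample uniruled divisor whose class has positive BBF square, and then to propagate it by Theorem~\ref{thm:uniruleddivisor}. First I would recall from \cite{PR18} that the relevant moduli spaces $M_v(S,\sigma)$ (resp.\ $K_v(S,\sigma)$) are primitive symplectic varieties whose second cohomology carries a Hodge structure and a BBF form for which the Mukai-type lattice description holds; in particular the polarization $H$ on such a moduli space (pulled back from a suitable $v$-generic stability/Gieseker chamber) can be arranged to have $q(H)>0$. The point is that on a \emph{projective} primitive symplectic variety $X$ with $q(L)>0$ for an ample $L$, a sufficiently high multiple $|mL|$ contains members which are uniruled divisors: this is the analog of the Bogomolov--Mumford theorem and, on a moduli space of sheaves on a $K3$ (resp.\ abelian) surface, can be produced by elementary modifications of sheaves along a fixed point of the surface, which sweep out a divisor ruled by rational curves $\mathbb P^1 \hookrightarrow X$ coming from the projectivization of $\Ext^1$-spaces (this is how the analogous statements are obtained in \cite{CMP19, MP, B21}). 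I would isolate such a pair $(X_0,D_0)$, with $D_0$ ample, uniruled by a family of genus zero stable maps $f\colon C\to X_0$, and $[D_0]$ proportional to $[H]$ (after replacing $H$ by a multiple so that $[D_0]=[H]$ on the nose).

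Next I would feed $(X_0,f)$ into Theorem~\ref{thm:uniruleddivisor}. Let $M$ be an irreducible component of $\overline{M_0}(X_0,f_*[C])$ through $[f]$ whose members cover $D_0$; since $[D_0]$ is $\Q$-Cartier (indeed Cartier, or at least $\Q$-Cartier after the reduction), part~(2) of the theorem tells us that over the Hodge locus $B\subset S=\Def^{\lt}(X_0)$ where $f_*[C]$ stays algebraic, every fiber $\mathcal X_b$ contains a uniruled divisor $D_b$ whose class is proportional to the dual $(f_*[C])^\vee$, hence (using that the BBF dual of an ample class of positive square is again of positive square, and is proportional to $[D_0]=[H]$ on $X_0$ by the smooth-case argument reproduced in the singular setting) proportional to the polarization class transported along $B$. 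The Hodge locus $B$ has positive dimension precisely because $q(f_*[C]) \neq 0$: the condition ``$f_*[C]$ algebraic'', i.e.\ its Poincaré dual lies in $H^{1,1}$, cuts out in the period domain a hypersurface-type locus of codimension equal to (essentially) $\rho^{2,0}$, which is $1$ for a primitive symplectic variety, so $\dim B = \dim S - 1 \geq 1$ once $\dim S \geq 2$; and the polarized moduli space $\mathfrak M$ locally about the chosen point is cut out inside $S$ by exactly the condition that the polarization stays $(1,1)$, so $B$ surjects onto (an open piece of) $\mathfrak M$ near $[(X_0,H)]$. Thus a non-empty open subset of one component of $\mathfrak M$ consists of polarized primitive symplectic varieties with an ample uniruled divisor proportional to the polarization; by a standard specialization/openness argument (ampleness is open, uniruledness of the divisor is preserved in the flat family of Theorem~\ref{thm:uniruleddivisor}) the whole component has this property.

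Finally, to get \emph{infinitely many} such components, I would vary the Mukai vector and the polarization: for each $k\gg 0$ the class $k[H]$ (or, more efficiently, classes obtained by applying distinct monodromy/parallel-transport operators, or by choosing infinitely many pairwise non-isometric polarized lattice embeddings $(\mathbb Z\,h \hookrightarrow H^2)$ with $q(h)>0$ but $h$ in different orbits) gives a polarized deformation class not isometric to the previous ones, hence a different connected component of $\mathfrak M$ in the sense of Definition~\ref{definition polarized moduli}; the construction of an ample uniruled divisor proportional to the polarization applies verbatim to each. The main obstacle I anticipate is the very first step — producing, \emph{in the singular case}, a uniruled member of $|mH|$ together with a genuine genus zero stable map covering it, since the elementary-modification construction on moduli of sheaves must be checked to land in the non-locally-free or strictly-semistable locus in a controlled way and to produce rational curves not entirely contained in $\Sing X_0$ (here Corollary~\ref{corollary term} and the terminalization-in-families input behind Theorem~\ref{thm:uniruleddivisor} are what make the deformation argument go through); everything downstream is then a formal consequence of Theorem~\ref{thm:uniruleddivisor} together with the lattice-theoretic bookkeeping of \cite{PR18}.
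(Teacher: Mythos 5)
Your overall propagation strategy --- locate one good polarized pair $(X_0,D_0)$ with $D_0$ ample, uniruled and of positive square, then spread the uniruled divisor over the whole component via Theorem~\ref{thm:uniruleddivisor} applied to the Hodge locus of the dual curve class --- is exactly the paper's, and that part of your argument is sound. The two remaining inputs, however, both have genuine gaps.

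The existence step is not closed. You propose to produce the uniruled ample divisor on the \emph{singular} moduli space directly by elementary modifications and $\P(\Ext^1)$-curves, citing \cite{CMP19, MP, B21}; but those references treat the smooth case, and you yourself flag that carrying this out on $M_v$ with $v=mw$, $m\geq 2$, is ``the main obstacle'' without resolving it. The paper takes a different route, which is the actual content of Section~\ref{section rational curves on moduli}: it first uses Corollary~\ref{cor:object_to_sheaves} to deform $M_v(S,\sigma)$ locally trivially to a moduli space of \emph{sheaves} $M_{(0,mH,0)}(S',H)$ with $\rho(S')=1$, then invokes the Perego--Rapagnetta generically finite dominant rational map \eqref{eq:PR} from a smooth \kntiposp manifold $M_u$ (where the existence results of \cite{CMP19, MPcorr} do apply), pushes ample uniruled divisors forward via Lemma~\ref{lemma rational big movable} to obtain big movable uniruled divisors on $M_v$, and runs an MMP with scaling to reach a model where the class is nef of positive square; terminality of $M_v$ plus Theorem~\ref{theorem:term} then guarantees the ruling avoids the singular locus so that Theorem~\ref{thm:uniruleddivisor} applies. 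None of this is replaceable by the elementary-modification sketch as written.

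The ``infinitely many components'' step fails as stated. Replacing $H$ by $kH$ produces nothing: by Definition~\ref{definition polarized moduli} polarizations are \emph{primitive} ample classes, so $kH$ is not admissible for $k\geq 2$. Applying monodromy/parallel-transport operators to a polarized pair keeps you in the \emph{same} connected component of $\gothM$ --- components correspond precisely to monodromy orbits of the polarization (cf.\ Proposition~\ref{proposition moduli space polarized}) --- so that produces nothing either. Choosing abstractly non-isometric embeddings $\Z h\hookrightarrow\Lambda$ does distinguish components, but only helps if one can \emph{realize} an ample uniruled divisor in each such class, which is exactly the missing construction. The paper obtains infinitude from the construction itself: $M_u$ has Picard rank at least two, hence carries primitive ample classes of arbitrarily large BBF square satisfying the numerical hypothesis of \cite[Proposition~4.6]{CMP19}, and the resulting uniruled divisors on $M_v$ have pairwise distinct positive squares; since the square of the polarization is locally constant on $\gothM$, these land in infinitely many distinct components.
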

The proof of the theorem above uses crucially  the analogous existence results proved in \cite{CMP19, MP} together with a rational map constructed in \cite[Lemma 3.19]{PR18} from a smooth moduli space \emph{of sheaves} $M_u(S,H)$ (respectively $K_u(S,H)$), where $u$ is primitive, onto $M_v(S,H)$ (respectively onto $K_v(S,H)$),  see \eqref{eq:PR}, and another important result contained in \cite{PR18} (cf. Theorem~1.19 therein). Notice that $\mathfrak M$  is of course not of finite type and the generality of our results comes at the price of not knowing whether the connected components of $\gothM$ possibly left out by our results are in finite number or not. Unlike in the smooth case (see \cite{CMP19, MP, B21}), we do not control here the numerical invariants of the connected components covered by our results. It is conceivable however, that our methods could be pushed further by studying the classes of divisors obtained via the Perego--Rapagnetta map \cite[Lemma 3.19]{PR18}. This together with a better knowledge on the monodromy groups in the singular case would allow us to obtain more precise statements, see Remark \ref{rmk:monodromy}.

In terms of marked moduli spaces (cf. Definition~\ref{definition marked moduli space}), Theorem~\ref{thm:amp} can be expressed by saying that every connected component of such a moduli space containing a variety of the form $M_v(S,\sigma)$, respectively $K_v(S,\sigma)$, contains infinitely many divisors parametrizing varieties with a uniruled divisor of positive square. Concerning the existence of exceptional divisors, we prove the following slightly weaker statement. 

\begin{thm}\label{thm:exc}
Let $\cM_\Lambda$  be any moduli space of marked primitive symplectic varieties locally trivially deformation equivalent to a moduli space $M_v(S,\sigma)$  of stable objects on a projective $K3$ surface $S$ of Mukai vector $v$ which are Bridgeland $\sigma$-semistable with respect to a $v$-generic stability condition.
Then any connected component of $\cM_\Lambda$ 
possesses a divisor whose points correspond to polarized primitive symplectic varieties containing a uniruled prime exceptional divisor.
\end{thm}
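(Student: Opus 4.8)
\emph{Sketch of proof.} The plan is to deduce the statement from Theorems~\ref{thm:uniruleddivisor} and~\ref{thm:exc-def} together with the Torelli theory for primitive symplectic varieties, once a single suitable projective model has been fed into each connected component of $\cM_\Lambda$. So fix a connected component $\cM_\Lambda^0$; it suffices to exhibit one marked pair $(X_0,\eta_0)\in\cM_\Lambda^0$ with $X_0$ projective, $\Q$-factorial, carrying a prime exceptional divisor $E_0$. Granting this, set $\ell:=\eta_0([E_0])\in\Lambda$, so $q(\ell)<0$ for the BBF form $q$. By Theorem~\ref{thm:exc-def}(2)(b) there is a non-empty Zariski open $U\subset\Hdg_{[E_0]}(X_0)\subset\Def^\lt(X_0)$ over which $E_0$ deforms to a prime exceptional divisor, and by Theorem~\ref{thm:exc-def}(1) together with Theorem~\ref{thm:uniruleddivisor} — the latter applied to the general ruling curve $R$ of $E_0$, whose dual is proportional to $[E_0]$, so that the locus on which $[R]$ stays algebraic is exactly $\Hdg_{[E_0]}(X_0)$ — this deformed divisor is moreover uniruled. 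Local Torelli (\cite{BL16,BL18}) identifies $\Def^\lt(X_0)$ with an open neighbourhood of the period point of $(X_0,\eta_0)$ in the period domain $\Omega_\Lambda$ and carries $\Hdg_{[E_0]}(X_0)$ to the trace of the irreducible hypersurface $H_\ell:=\{[\sigma]\in\Omega_\Lambda:q(\sigma,\ell)=0\}$. Since the period map $\cM_\Lambda^0\to\Omega_\Lambda$ is a local isomorphism and is surjective (\cite{BL18}), the preimage of $H_\ell$ is a divisor of $\cM_\Lambda^0$; the closure $\Sigma$ of the component of its smooth locus through $(X_0,\eta_0)$ is then a divisor, over a dense open subset of which (the image of $U$) every fibre carries a uniruled prime exceptional divisor of class $\ell$. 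This $\Sigma$ is the divisor asserted by the theorem.

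It remains to produce $(X_0,\eta_0)$ in an arbitrary component $\cM_\Lambda^0$, which is where the moduli-theoretic input is needed. Combining \cite[Theorem~1.19]{PR18} with the surjectivity of the period map, every component of $\cM_\Lambda$ contains — after suitably choosing a $K3$ surface $S$, a Mukai vector $v'$, a $v'$-generic polarization $H$ and a marking — a marked moduli space $M_{v'}(S,H)$ of Gieseker $H$-semistable sheaves; this is a projective primitive symplectic variety which, outside the O'Grady cases (where one replaces it by a $\Q$-factorial terminalization, available since it is projective), is locally factorial, hence $\Q$-factorial. On such an $X_0:=M_{v'}(S,H)$ I would produce the prime exceptional divisor in one of two ways. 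If $v'$ is primitive, $X_0$ is a smooth $K3^{[n]}$-type manifold and one replaces it, staying in the same component, by a Hilbert scheme $(S')^{[n]}$ carrying the divisor $2\delta$ contracted by the Hilbert--Chow morphism — such a marked Hilbert scheme sits in every component by Markman's classification of prime exceptional classes \cite{Mar13} and the transitivity of the monodromy action on $\delta$-type classes. If $v'$ is not primitive, one uses the dominant rational map $\psi\colon M_u(S,H)\dashrightarrow M_{v'}(S,H)$ of \cite[Lemma~3.19]{PR18} with $u$ primitive, $M_u(S,H)$ a smooth $K3^{[n]}$-type moduli space of sheaves: choosing $u$ of positive rank, the divisor $E_u\subset M_u(S,H)$ contracted by the Li--Gieseker--Uhlenbeck morphism onto the Uhlenbeck compactification is prime exceptional, and the closure $E_0$ of the locus swept on $M_{v'}(S,H)$ by the deformations of a general ruling curve of $E_u$ is a uniruled divisor; analysing the indeterminacy and contracted loci of $\psi$ and the compatibility of the BBF forms built into the construction of \cite{PR18}, one checks $q(E_0)<0$, so $E_0$ is prime exceptional. (Alternatively, in the primitive case a divisorial wall-crossing in the space of Bridgeland stability conditions directly produces a birational model in the same component on which a prime exceptional divisor is contracted.)

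The main obstacle, and the reason the theorem falls short of its smooth counterparts \cite{CMP19,MP}, lies entirely in the second paragraph. One must verify that the divisor carried along the Perego--Rapagnetta map stays \emph{prime} and of \emph{negative} BBF square: since $\psi$ is only rational, its exceptional behaviour has to be matched against the natural boundary strata of $M_{v'}(S,H)$ — non-locally-free sheaves, Brill--Noether loci — and the class of $E_0$ computed against the BBF form. And one must know that every connected component of $\cM_\Lambda$ is realised by a Gieseker moduli space, which rests on the monodromy theory for these singular deformation types, only partially available at present (cf.\ Remark~\ref{rmk:monodromy}); this is exactly why the conclusion is merely the existence of \emph{a} divisor, with no control of numerical invariants. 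The globalisation in the first paragraph is by contrast soft: the single point requiring care is that the dense open subset furnished by Theorem~\ref{thm:exc-def}(2)(b) automatically meets the component $\Sigma$ of the Hodge-locus preimage through $(X_0,\eta_0)$, which holds because near that point $\Sigma$ is identified, via local Torelli, with $\Hdg_{[E_0]}(X_0)$.
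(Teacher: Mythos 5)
Your first paragraph (the globalization step) is essentially the paper's argument: deform the prime exceptional divisor over the Hodge locus of its class using Theorems~\ref{thm:uniruleddivisor} and~\ref{thm:exc-def}, and identify that Hodge locus with a divisor in the connected component via local Torelli. (Two small remarks: you do not need surjectivity of the period map --- non-emptiness of the Hodge locus is automatic since it contains the chosen point --- and at special points of the Hodge locus the deformed divisor may fail to be prime, which the paper handles by extracting an irreducible component of negative square.) The genuine gap is in your second paragraph, i.e.\ in the production of the initial marked pair $(X_0,\eta_0)$ with a prime exceptional divisor, which is the actual content of the theorem. Your proposed mechanism --- push forward the Li--Gieseker--Uhlenbeck exceptional divisor from a smooth moduli space $M_u(S,H)$ with $u$ \emph{of positive rank} along the Perego--Rapagnetta map --- does not exist as described: the map of \cite[Lemma~3.19]{PR18} (recalled in \eqref{eq:PR}) goes from $M_u$ with $u=(0,mH,1-(g-1))$, a \emph{rank-zero} Mukai vector, to $M_v$ with $v=(0,mH,0)$; there is no Uhlenbeck contraction in that setting. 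Moreover, even with a correct source, you explicitly defer the two decisive verifications (that the image divisor is prime and that $q(E_0)<0$); note that the paper's Lemma~\ref{lemma rational big movable} controls pushforwards only in the movable/big direction, so negativity of the square is genuinely not automatic. The paper avoids the pushforward entirely: for $v=(0,mH,0)$ on a general $K3$, the moduli space $M_v$ is a compactified relative Jacobian, and the image of the relative Abel--Jacobi map $\mathcal C^{(g-1)}\to\mathcal J^{g-1}$ (the relative theta divisor) is a uniruled divisor whose ruling curves are the sections over pencils $\P H^0(S,\cO_S(mH)\otimes I_\xi)$ through $g-1$ general points; these are smooth rational curves, so Lemma~\ref{lem:neg} gives $q(E_0)<0$ directly (Proposition~\ref{prop:exc-k3}). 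The reduction from an arbitrary $v$ to this case is Corollary~\ref{cor:object_to_sheaves}.

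The second gap is your treatment of arbitrary connected components. You assert that every component of $\cM_\Lambda$ contains a marked Gieseker moduli space and then concede that this ``rests on the monodromy theory\dots only partially available at present,'' which leaves your argument incomplete for all but one component. The paper sidesteps monodromy altogether: the automorphisms $(X,f)\mapsto(X,g\circ f)$ of $\cM_\Lambda$ for $g\in\O(\Lambda)$ act transitively on the set of connected components (every component contains some marked copy of $M_v(S,\sigma)$ by parallel transport of the marking along a deformation chain, and any two markings differ by an isometry), and the property of containing a uniruled prime exceptional divisor is independent of the marking. You should incorporate this observation; without it, and without a correct construction of the initial divisor, the proof is not complete.
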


Since the K\"ahler variety parametrized by the very general point of $\cM_\Lambda$ has Picard group of rank zero, the locus where prime exceptional divisors exist is a priori a countable union of subvarieties of positive codimension. 

{\it Acknowledgments.} We are grateful to G. Ancona, K. Hulek, E. Macr\`i, A. Rapagnetta and K. Yoshioka for their useful comments.

%
\section{Preliminaries}\label{section preliminaries}
%

We will first introduce the various notions of singular symplectic varieties and discuss their basic properties in \ref{section symplectic varieties}, recall the Bogomolov--Beauville--Fujiki form in \ref{section bbf}, and then introduce their moduli spaces in \ref{section symplectic moduli} and \ref{section moduli of polarized symplectic}. In \ref{ss:mod}, we adapt a result due to Perego and Rapagnetta (to the framework of moduli spaces of stable objects) and in \ref{ss:PR}, we present a construction of theirs which is central for the proof of Theorem~\ref{thm:amp}. We conclude the section by showing that moduli spaces of sheaves with respect to a $v$-generic polarization on a projective $K3$ or abelian surface are primitive symplectic varieties. 

\subsection{Symplectic varieties}\label{section symplectic varieties}

Let $X$ be a normal complex variety.
Recall that, for any integer $p\geq 1$, the sheaf $\Omega_X^{[p]}$ of {\it reflexive holomorphic $p$-forms} on $X$
is $\iota_* \Omega_{X_{\reg}}^{p}$, where 
$$\iota : X_{\reg}\hookrightarrow X$$
is the inclusion of the regular locus of $X$. 
It can be alternatively (and equivalently) defined by the double dual $\Omega_X^{[p]}=(\Omega_X^{p})^{**}$.
Recall the following definition which is due to Beauville \cite{Bea00}. 
\begin{definition}\label{def:symp-sing}
Let X be a normal 
variety.
\begin{enumerate}
\item[i)] A \emph{symplectic form} on X is a closed reflexive $2$-form $\sigma$ on $X$
which is non-degenerate at each point of $X_{\reg}$.
\item[ii)]  If $\sigma$ is a symplectic form on $X$, the variety $X$ has \emph{symplectic
singularities} if for one (hence for every) resolution $f : \tilde X \to X$ of the singularities
of $X$, the holomorphic symplectic form $\sigma_{\reg} :=\sigma_{|X_{\reg}}$ extends to
a holomorphic $2$-form on $\tilde X$. In this case, the pair $(X,\sigma)$ is called \emph{symplectic variety}.
\end{enumerate}
\end{definition}

The local structure of symplectic singularities is described by the following proposition, which follows from a combination of \cite[Theorem 2.3]{Kal06}, \cite[Corollary 2.6]{Art69} and \cite[Lemma 1.3]{Nam11}. We refer to \cite[Theorem~3.4]{BL18} for further details regarding the validity of this statement in the complex analytic situation. The proposition will play a role in the proof of Theorem~\ref{thm:exc-def}.
\begin{prop}\label{prop:kaledin}
Let $(X,\sigma)$ be a symplectic variety and let $\Sigma \subset X$ be the singular locus {of $X_{\sing}$}. Then $\codim_X \Sigma \geq 4$ and every $x\in U:=X\setminus \Sigma$  has a neighborhood which is locally analytically isomorphic to $(\C^{2n-2},0)\times (S,p)$ where $2n=\dim X$ and $(S,p)$ is the germ of {a smooth point or} a rational double point on a surface. This isomorphism can be chosen to preserve the symplectic structure.\qed
\end{prop}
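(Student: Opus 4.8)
The statement is a combination of three known results, and the plan is essentially to assemble them correctly. First I would recall Kaledin's theorem on the local structure of symplectic singularities \cite[Theorem 2.3]{Kal06}: for a symplectic variety $(X,\sigma)$, formal-locally (or analytically-locally) along a stratum of the natural stratification, $X$ looks like a product of an affine space with a \emph{cone} over a lower-dimensional symplectic variety. Applying this along the open stratum $X_{\reg}$, and then along the next stratum (the generic points of $X_{\sing}$), one gets that near a generic point of $X_{\sing}$ the variety is analytically a product $(\C^{2n-2},0)\times(S,p)$ with $(S,p)$ the germ of an isolated surface symplectic singularity. By the classification of symplectic surface singularities — they are exactly the rational double points (Du Val/ADE singularities), cf. \cite[Lemma 1.3]{Nam11} and \cite[Theorem 2.3]{Kal06} — one identifies $(S,p)$ as an RDP (or a smooth point, where the stratum in question is actually part of $X_{\reg}$). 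The codimension bound $\codim_X\Sigma\geq 4$ is then immediate, since the singular locus of $X$ has generic transverse slice an RDP (codimension $2$ in the slice, so codimension $2n-(2n-2)+2=4$ in $X$... more precisely: $X_{\sing}$ has codimension $\geq 2$ and its own singular locus $\Sigma$ has codimension $\geq 2$ \emph{inside} $X_{\sing}$, forcing $\codim_X\Sigma\geq 4$); this is exactly the content extracted from Kaledin's stratification together with the observation that RDPs are isolated. The point about preserving the symplectic structure comes from the fact that Kaledin's local models carry their own symplectic forms and the isomorphism can be taken symplectic; alternatively, once one has a product decomposition $(\C^{2n-2},0)\times(S,p)$, the restriction of $\sigma$ decomposes by a Darboux-type argument into the standard form on the $\C^{2n-2}$ factor plus the symplectic form on the RDP resolution.

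The role of \cite[Corollary 2.6]{Art69} in the cited combination is the algebraization/analytification step: Kaledin's result is naturally formal, and Artin approximation upgrades a formal isomorphism between $X$ (near a point of $U$) and the model $(\C^{2n-2},0)\times(S,p)$ to an analytic (or étale-local) isomorphism, since both sides are of finite type and the functor of isomorphisms is of finite presentation. I would invoke \cite[Theorem 3.4]{BL18} for the statement that all of this goes through in the complex-analytic category, which is the setting needed here since $X$ need not be projective.

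The only genuine subtlety — and the step I would flag as requiring the most care — is the passage from "formal local product over the generic point of $X_{\sing}$" to "analytic local product at \emph{every} point $x\in U=X\setminus\Sigma$", uniformly. A priori Kaledin's statement is about the generic transverse slice along a stratum; one must argue that the transverse slice is constant (an RDP of fixed or varying ADE type) along each irreducible component of $X_{\sing}\setminus\Sigma$ and that the product structure extends over the whole component minus $\Sigma$. This is where $\Sigma$ being defined as the singular locus of $X_{\sing}$ (equivalently, the locus where the analytic-local structure degenerates) is exactly what makes the statement clean: over $U$, the slice type is locally constant, and lower semicontinuity of Milnor-type invariants together with the symplectic constraint pins it down. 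I would either cite this directly from \loc{} or give a short argument that the locus where $X$ fails to be analytically a product of $\C^{2n-2}$ with an RDP germ is closed of codimension $\geq 4$, which is precisely $\Sigma$. Everything else is bookkeeping: dimension count for the codimension bound, and matching the symplectic forms.
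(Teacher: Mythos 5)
Your proposal is correct and follows essentially the same route as the paper, which gives no written proof at all: it simply states that the proposition follows from combining \cite[Theorem 2.3]{Kal06}, \cite[Corollary 2.6]{Art69} and \cite[Lemma 1.3]{Nam11}, with \cite[Theorem 3.4]{BL18} for the complex-analytic setting --- exactly the ingredients you assemble, with Artin approximation playing precisely the formal-to-analytic role you describe. Your expanded discussion of the codimension bound and of the passage from the generic point of $X_{\sing}$ to all of $U$ is a reasonable fleshing-out of what the cited sources provide.
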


\begin{definition}\label{def:PSV}
A \emph{primitive symplectic variety} is a normal compact K\"ahler variety $X$ such that $h^1(X,\mathcal O_X)=0$ and $H^0(X,\Omega_X^{[2]})$ is generated by a holomorphic symplectic form $\sigma$ such that $X$ has symplectic singularities.
\end{definition}

For a normal variety $X$ such that $X_{\reg}$ has a symplectic form $\sigma$, Beauville's condition above that the pullback of $\sigma$ to a resolution of $X$ extends as a regular $2$-form is in fact equivalent to having canonical, even rational singularities by \cite{Elk81}, \cite[Corollary 1.8]{KS18}. For the definition and basic properties of K\"ahler forms on possibly singular complex spaces we refer the reader to e.g. \cite[Section 2]{BL18}.

In order to put Definition \ref{def:PSV} into perspective, recall first the following.
\begin{definition}\label{def:ISV}
An \emph{irreducible symplectic variety} is a normal  compact K\"ahler variety $X$ with canonical singularities
and such that for any  finite morphism $f:X'\to X$ which is \'etale in codimension one the reflexive pull-back $f^{[*]}\sigma$ of the symplectic form $\sigma$ on $X$
generates the exterior algebra of reflexive forms on $X'$. 
\end{definition}
Irreducible symplectic varieties appear in the Beauville--Bogomolov decomposition for numerically $K$-trivial log terminal K\"ahler varieties obtained in \cite[Theorem~A]{BGL20}, building on earlier work in the projective case of \cite{HP19, Dru18, GGK19, GKP16}. In the smooth case, being primitive symplectic or irreducible symplectic is equivalent by \cite[Theorem~1]{Sch20} while in the singular case an irreducible symplectic variety is primitive symplectic but not vice versa. 

To see that irreducibility implies primitivity, it is sufficient to check that $h^1(\mathcal O_X)=0$ whenever $X$ is irreducible symplectic. If $Y\to X$ is any desingularization, then since symplectic singularities are rational we have
$h^1(X,\mathcal O_X)=h^1(Y,\mathcal O_Y)$. The latter is equal to $h^0(Y,\Omega^1_Y)$, which is $\leq h^0(X,\Omega^{[1]}_X)$  (it is actually equal to $h^0(X,\Omega^{[1]}_X)$ by \cite{KS18}, but we do not need this deep result here). To conclude, notice that $ h^0(X,\Omega^{[1]}_X)=0$ by the definition of irreducible symplectic varieties. As counterexample to the converse, one may take for instance the Kummer singular surface $A_{/\pm1}$ which is primitive symplectic but has a cover by the abelian surface $A$ which is finite and \'etale in codimension one, hence it is not irreducible symplectic.  
For a review of the different notions of ``singular'' symplectic varieties we refer the reader to the survey \cite{Pe20}.

As a consequence of the results of \cite{BGL20}, a locally trivial deformation of an irreducible symplectic variety is irreducible symplectic. We sketch the argument.

\begin{lemma}\label{lemma irreducible symplectic locally trivial}
Let $X$ be an irreducible symplectic variety and let $\sX \to \Def^\lt(X)$ be its universal locally trivial deformation. Then $\sX_t$ is irreducible symplectic for any $t\in \Def^\lt(X)$, possibly after shrinking the representative.
\end{lemma}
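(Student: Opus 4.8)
The plan is to verify three properties along the locally trivial deformation $\sX \to \Def^\lt(X)$: normality and compactness are automatic (local triviality preserves the analytic type of singularities, and properness is preserved in a flat proper family), K\"ahlerianity holds after shrinking the base (K\"ahler forms deform, cf. \cite[Section 2]{BL18}), and canonicity of singularities is preserved because local triviality means the singularity germs are literally unchanged. So the real content is to show that the defining condition of Definition~\ref{def:ISV} — that for every finite morphism $f\colon X_t'\to \sX_t$ \'etale in codimension one, the reflexive pullback of the symplectic form generates the full exterior algebra of reflexive forms on $X_t'$ — propagates from the central fiber to nearby fibers.

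First I would record that the symplectic form itself deforms: since $X$ has symplectic (hence rational) singularities and the deformation is locally trivial, $\sX_t$ again has symplectic singularities, and one gets a relative reflexive $2$-form on $\sX/\Def^\lt(X)$ restricting to the given $\sigma$ on $X$; by semicontinuity and the fact that $h^0(X,\Omega_X^{[2]})=1$ (which for an irreducible symplectic variety follows from $h^{2,0}=1$ via the results quoted after Definition~\ref{def:PSV}, or directly from \cite{KS18}), the form $\sigma_t$ spans $H^0(\sX_t,\Omega_{\sX_t}^{[2]})$ for $t$ in a neighborhood of the origin. Next, and this is the crux, I would argue that the \'etale-in-codimension-one covers behave well in families. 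The key point is that the \'etale fundamental group of $(\sX_t)_{\reg}$, or rather the relevant finite quotients classifying covers \'etale in codimension one, is constant in the family after shrinking: this is where one invokes topological local triviality of the family of pairs $((\sX_t)_{\reg}\subset \sX_t)$, so that the Whitney-stratified total space retracts onto the central fiber and $\pi_1$ of the regular loci are identified. Hence any \'etale-in-codimension-one cover $X_t' \to \sX_t$ is the restriction of a cover $\sX' \to \sX$ (over a neighborhood of $0$) whose central fiber $X_0' \to X$ is such a cover of $X$.

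Then I would apply the hypothesis on the central fiber: on $X_0'$ the reflexive pullback of $\sigma$ generates the reflexive exterior algebra, meaning in particular $h^0(X_0', \Omega_{X_0'}^{[2k]}) = 1$ for all $k$ and $h^0(X_0', \Omega_{X_0'}^{[2k+1]})=0$. Since $X_0'$ is again a compact variety with rational singularities (quotient-type, or at worst the covers of rational double points, which are rational) and $\sX'/\Def^\lt(X)$ is flat and locally trivial, semicontinuity of $h^0(\Omega^{[p]})$ in the family $\sX'$ — using that reflexive differentials behave well, e.g. via a simultaneous resolution or via \cite{KS18}-type extension results — forces the same Hodge numbers on $X_t'$, and then $\sigma_t^{[k]}$, being a non-zero reflexive $2k$-form (its non-degeneracy on the regular locus is an open condition, preserved because $\sigma_t$ is symplectic), must span $H^0(X_t',\Omega_{X_t'}^{[2k]})$; likewise the odd-degree reflexive forms vanish. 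This is exactly the condition making $\sX_t$ irreducible symplectic.

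The main obstacle is the control of the covers: one must ensure (i) that every \'etale-in-codimension-one cover of $\sX_t$ extends over the base to a cover of the total space, which requires a Grauert--Remmert / $\pi_1$-constancy argument for the family of regular loci, and (ii) that the numerical invariants $h^0(\Omega^{[\bullet]})$ of these covers are semicontinuous in a locally trivial family — this is delicate for reflexive differentials on singular spaces and is really where one leans on the deep extension theorems of \cite{KS18} (or on a relative functorial resolution as used elsewhere in the paper). Everything else — normality, properness, the K\"ahler condition, the single symplectic form — is routine deformation theory once local triviality is in hand.
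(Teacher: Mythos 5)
Your proposal is correct and follows essentially the same route as the paper's proof: deform the symplectic form to nearby fibers, extend quasi-\'etale covers across the locally trivial family, and conclude by (semi)continuity of global reflexive forms. The paper simply packages the two nontrivial steps as citations to \cite{BGL20} --- Lemma~3.7/3.8 there for the correspondence between quasi-\'etale covers of $X$, of $\sX$, and of nearby fibers (and the local triviality of the resulting cover families), and Corollary~2.24 for the invariance of global reflexive forms under locally trivial deformations --- which is exactly the $\pi_1$-constancy and reflexive-forms machinery you sketch by hand.
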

\begin{proof}
By \cite[Corollary~2.24]{BGL20}, the global sections of the algebra of reflexive differential forms is invariant under locally trivial deformations. As any small deformation of a symplectic form remains symplectic, also on nearby fibers the algebra of holomorphic forms is generated by a nonzero $2$-form. Quasi-\'etale covers of $X$ give rise to quasi-\'etale covers of $\sX$ that prolong the one on $X$ by \cite[Lemma~3.7]{BGL20} and finally quasi-\'etale covers of $\sX$ are again locally trivial by \cite[Lemma~3.8]{BGL20}. The claim now follows from invoking once more the invariance of reflexive forms under locally trivial deformations.
\end{proof}

\subsection{Beauville--Bogomolov--Fujiki form}\label{section bbf}

Let $X$ be a primitive symplectic variety. Then there is a quadratic form $q_X$ on $H^2(X,\C)$, the so-called Beauville--Bogomolov--Fujiki form (\emph{BBF form} for short), see \cite[Definition~5.4]{BL18}. Up to scaling it is defined by the formula
\begin{equation}\label{eq bbf}
q_X(\alpha):= \frac{n}{2} \int_X \left(\sigma\bar\sigma\right)^{n-1}\alpha^2 + (1-n) \int_X \sigma^n\bar\sigma^{n-1}\alpha \int_X \sigma^{n-1}\bar\sigma^n\alpha.
\end{equation}
Due to \cite{Nam01b, Kir15, Mat15, Sch17, BL18} we know that $q_X$ is defined over $\Z$ and nondegenerate of signature $(3,b_2(X)-3)$, see Section~5 of \cite{BL18} and references therein. It is used to formulate the local Torelli theorem \cite[Proposition~5.5]{BL18}, satisfies the Fujiki relations \cite[Proposition~5.15]{BL18}, and is compatible with the Hodge structure on $H^2(X,\Z)$, see \ref{section symplectic moduli} below where this last property is crucial in the definition of the period map.  Moreover, it allows to identify second degree homology with cohomology, more precisely:

\begin{definition}\label{definition dual class}
Let $X$ be primitive symplectic. For $\alpha \in H^2(X,\Q)$, we define the \emph{dual class} $\alpha^\vee\in H^2(X,\Q)^\vee = H_2(X,\Q)$ by the condition
\[
q_X(\alpha,\beta) = \alpha^\vee(\beta).
\]
In the same way, we define $\gamma^\vee\in H^2(X,\Q)$ for a homology class $\gamma\in H_2(X,\Q)$. Clearly, $\alpha^{\vee\vee} = \alpha$.
\end{definition}

\subsection{Moduli spaces of marked primitive symplectic varieties}\label{section symplectic moduli}

Let $\Lambda$ be a lattice of signature $(3,n)$. For a primitive symplectic variety $X$, let $H^2(X,\Z)_\tf$ denote the torsion-free part of its second cohomology.  A $\Lambda$-marking of $X$ is an isometry $\mu : H^2(X,\Z)_\tf \to \Lambda$.  A \emph{$\Lambda$-marked primitive symplectic variety} is a primitive symplectic variety together with the choice of a $\Lambda$-marking. An isomorphism of $\Lambda$-marked primitive symplectic varieties $(X,\mu), (X',\mu')$ is an isomorphism $\vphi:X \to X'$ such that $\mu'=\mu \circ \vphi^*$.

\begin{definition}\label{definition marked moduli space}
We define $\sM_\Lambda$ to be the moduli space of $\Lambda$-marked primitive symplectic varieties of a fixed locally trivial deformation type, that is, its elements are isomorphism classes of $\Lambda$-marked primitive symplectic varieties $(X, \mu)$ where $X$ is a locally trivial deformation of a fixed primitive symplectic variety $X_0$, and the (not-necessarily-Hausdorff topology and) complex structure is obtained from patching Kuranishi spaces for locally trivial deformations together using that miniversal locally trivial deformations are universal, see \cite[Lemma~4.9]{BL18}. From unobstructedness of locally trivial deformations \cite[Theorem~4.7]{BL18}, one deduces that $\sM_\Lambda$ is smooth.
\end{definition}

Let $H=(H_\Z,F^\bullet,q)$ be a \emph{semi-polarized hyperk\"ahler Hodge structure} in the sense of \cite[Definition~8.1]{BL18}, i.e. $H_\Z$ is a $\Z$-module of finite type, $F^\bullet$ is a decreasing filtration on $H_\C:=H_Z\tensor \C$ such that $(H_\Z,F^\bullet)$ becomes a pure Hodge structure of weight $2$, and $q:H_\Z \to \Z$ is a non-degenerate bilinear form of signature $(3,\rk H_\Z -3)$ such that $H^{2,0} \subset H_C$ is an isotropic line and the real space underlying $H^{2,0}\oplus H^{0,2}$ is positive and perpendicular to $H^{1,1}$. As usual, we denoted $H^{p,q}:=F^p\cap \ol F^q$. Let us fix a lattice $\Lambda$ of signature $(3,n)$. Then the period domain for Hodge structures of hyperk\"ahler type on the lattice $\Lambda$ is
\begin{equation}\label{eq period domain}
\sD_\Lambda:=\{ x \in \P(\Lambda\tensor \C) \mid (x,x)=0,\, (x,\bar x) > 0\}
\end{equation}
where $\P(\Lambda\tensor \C)$ denotes the projective space of lines in $\Lambda\tensor \C$ and $x$ is to be interpreted as the $H^{2,0}$-part of the Hodge decomposition, from which the Hodge structure can be reproduced. It is easily seen that $\sD_\Lambda$ is connected. For any sublattice $\Lambda_0 \subset \Lambda$ of signature $(2,n)$, the set $\sD_\Lambda \cap \P(\Lambda_0 \tensor \C)$ has two connected components. We choose one of them and denote it by $\sD_{\Lambda_0}$. In fact, such a choice is determined by the choice of a component of the cone of the positive classes in $\Lambda_0^\perp$, and we can choose it in a coherent way for all such hyperplanes, by taking the component intersecting a fixed component of the positive cone of $\Lambda$.

The period map for $\Lambda$-marked primitive symplectic varieties is the map
\begin{equation}\label{eq period map}
\wp:\sM_\Lambda \to \sD_\Lambda, \qquad (X,\mu) \mapsto \mu(H^{2,0}(X)).
\end{equation}
It is holomorphic and a local isomorphism by the local Torelli theorem \cite[Corollory~5.5]{BL18}. Moreover, it is generically injective when restricted to a connected component  if $\rk \Lambda \geq 5$ by \cite[Theorem~1.1]{BL18} or if $X$ has only quotient singularities by \cite[Theorem~1.1]{Men20}.

\subsection{Moduli spaces of polarized primitive symplectic varieties}\label{section moduli of polarized symplectic}
Recall that a \emph{polarized variety} is a pair $(X,L)$ consisting of a projective variety $X$ and a primitive ample line bundle $L$ on $X$. 
\begin{definition}\label{definition polarized moduli}
For each $d\in \N$ and each locally trivial deformation type $\delta$ of primitive symplectic varieties, we denote $\gothM_{\delta,d}$ the coarse moduli space of polarized primitive symplectic varieties $(X,L)$ where $X$ is a primitive symplectic variety of type $\delta$ and $L$ is an ample line bundle of BBF square $d$. We write 
\[
\gothM_\delta:= \coprod_{d \in \N} \gothM_{\delta,d}.
\]
\end{definition}

Note that $\gothM_{\delta,d}$ exists and is a quasi-projective scheme by \cite[Proposition~8.7 and Lemma~8.8]{BL18} or \cite[Corollary~1.2]{BBT19}. For $b_2(X) \geq 5$, the global Torelli theorem \cite[Theorem~1.1]{BL18} allows to describe the moduli spaces of polarized primitive symplectic varieties. Even though we will not use it, we include it for convenience. The proof goes as in the smooth case, using results from \cite{BL18} instead of their smooth analogs.

\begin{proposition}\label{proposition moduli space polarized}
Let $d\in \N$ and $\delta$ be a locally trivial deformation type of primitive symplectic varieties and suppose that $\rk\Lambda \geq 5$. If $\gothM_0 \subset \gothM_{\delta,d}$ is an irreducible component, then there is a primitive vector $v \in \Lambda$ of square $d$ such that $\gothM_0$ is a Zariski open subset of $\Gamma \backslash \sD_{v^\perp}$ where $\Gamma \subset \O(\Lambda)$ is the subgroup leaving the period domain fixed.\qed
\end{proposition}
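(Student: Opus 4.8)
The plan is to follow the now-standard argument in the smooth case (as carried out for irreducible symplectic manifolds via the Torelli theorem), replacing each smooth input by its counterpart from \cite{BL18}. Note first that $d\in\N$ is automatically positive, since an ample class has positive BBF square; hence $v^\perp$ has signature $(2,n)$ for any $v\in\Lambda$ with $q(v)=d$, so the period subdomain $\sD_{v^\perp}$ of \ref{section symplectic moduli} makes sense. \emph{Step 1 (the vector $v$).} Pick a point $(X,L)$ of $\gothM_0$, choose a marking $\mu\colon H^2(X,\Z)_\tf\to\Lambda$, and set $v:=\mu(L)$; then $v$ is primitive and $q(v)=q_X(L)=d$. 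One checks that the $\O(\Lambda)$-orbit of $v$ depends neither on $(X,L)$ nor on $\mu$: changing $\mu$ replaces $v$ by an element of its $\O(\Lambda)$-orbit, and moving $(X,L)$ in the connected variety $\gothM_0$ changes $v$ only through parallel transport in the local system $R^2\pi_*\Z_\tf$ attached to (an \'etale cover carrying) a family, which acts by isometries. Hence the orbit, and in particular $q(v)$ and the divisibility of $v$, is constant along $\gothM_0$, which produces the desired $v$, fixed from now on together with a component $\sD_{v^\perp}$ adapted to it as in \ref{section symplectic moduli}.

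\emph{Step 2 (the period morphism).} Working \'etale-locally on $\gothM_0$, over a suitable \'etale cover $U\to\gothM_0$ there is a family $\pi\colon\mathcal X\to U$ of polarized primitive symplectic varieties together with a marking of $R^2\pi_*\Z_\tf$ carrying the polarization to the constant section $v$. Composing the induced classifying map $U\to\sM_\Lambda$ with $\wp$ yields a holomorphic map $U\to\sD_{v^\perp}$, since the polarization stays of type $(1,1)$ precisely over $\wp^{-1}(\sD_{v^\perp})$. Two markings normalized by $\mu(L)=v$ differ by an isometry fixing $v$, hence lying in $\Gamma$; therefore the composite $U\to\sD_{v^\perp}\to\Gamma\backslash\sD_{v^\perp}$ is independent of all choices, and --- using that $\Gamma\backslash\sD_{v^\perp}$ is a quasi-projective variety by Baily--Borel, so that gluing over the cover poses no problem --- it descends to a morphism $\Phi\colon\gothM_0\to\Gamma\backslash\sD_{v^\perp}$.

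\emph{Step 3 (local isomorphism and conclusion).} By the local Torelli theorem \cite[Corollary~5.5]{BL18}, $\wp$ is a local isomorphism on each component of $\sM_\Lambda$; since in the smooth ambient space $\sM_\Lambda$ the Hodge locus of $v$ maps locally isomorphically onto the smooth hyperplane section $\sD_{v^\perp}$ of the period quadric, the map $U\to\sD_{v^\perp}$ of Step 2 is a local isomorphism. As $\sD_{v^\perp}\to\Gamma\backslash\sD_{v^\perp}$ is finite and open, $\Phi$ is, \'etale-locally on its source, the composite of an open embedding with a finite open map; hence $\Phi$ is open and quasi-finite, and in particular $\dim\gothM_0=\rk\Lambda-3=\dim\Gamma\backslash\sD_{v^\perp}$ and $\Phi$ is dominant. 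It remains to see that $\Phi$ is generically injective: at a very general point of $\gothM_0$, any two preimages, once their markings are normalized so that $L\mapsto v$ and then adjusted by $\Gamma$ so that the periods coincide, become two marked primitive symplectic varieties with the same period both carrying the integral ample class $\mu^{-1}(v)$, hence are isomorphic by the global Torelli theorem \cite[Theorem~1.1]{BL18} (applicable since $\rk\Lambda\ge 5$), exactly as in the smooth case. A quasi-finite, open, generically injective dominant morphism onto a normal variety is an isomorphism onto a Zariski-open subset by Zariski's main theorem, which gives the claim.

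\emph{Main obstacle.} The one genuinely non-formal point is the generic injectivity of $\Phi$: passing from the period map on $\sM_\Lambda$ to the quotient $\Gamma\backslash\sD_{v^\perp}$ requires the full strength of \cite[Theorem~1.1]{BL18}, and one must verify carefully that the polarization cuts out an \emph{open} (ampleness) condition inside $\wp^{-1}(\sD_{v^\perp})$ and is transported compatibly under isomorphisms of marked varieties. The remaining fiddly point is purely bookkeeping: matching the sign of $v$ with the chosen component $\sD_{v^\perp}$, i.e. checking that every element of $\Gamma$ (the stabilizer of the component) indeed fixes $v$, so that the normalization $\mu(L)=v$ is preserved by $\Gamma$ and the descent in Step 2 is legitimate.
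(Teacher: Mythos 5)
Your proposal is correct and follows essentially the same route as the paper, which gives no detailed argument but states that ``the proof goes as in the smooth case, using results from \cite{BL18} instead of their smooth analogs''; your Steps 1--3 are precisely that standard argument (period map to $\Gamma\backslash\sD_{v^\perp}$, local Torelli, generic injectivity from \cite[Theorem~1.1]{BL18} for $\rk\Lambda\ge 5$, Baily--Borel and Zariski's main theorem), with the relevant singular inputs correctly substituted.
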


\subsection{Moduli spaces of stable objects on K3 or abelian surfaces.}\label{ss:mod}

We recall here some relevant definitions and results about moduli spaces of Bridgeland stable objects on K3 or abelian surfaces. 
These are a generalization of moduli spaces of stable (twisted) sheaves on the same class of (twisted) surfaces. Their construction is based on the existence of a Bridgeland stability condition $\sigma$, proved in \cite[Theorem 1.1]{bridg}. In the following, whenever we use a stability condition we suppose it lies in the component $\mathrm{Stab}^\dagger(S)$ constructed by Bridgeland.
 
Let $S$ be a projective $K3$ surface or an abelian surface. Set 
$$\tilde H(S,\Z):=H^{2\star} (S,\Z)=\Z\oplus H^2 (S,\Z) \oplus\Z. $$ 
An element 
$v=(v_0,v_1,v_2)\in \tilde H(S,\Z)$
is called \emph{Mukai vector} if $v_0\geq 0$, $v_1\in \NS(S)$, and if in case $v_0=0$ either $v_1$ is the first Chern class of an effective divisor, or $v_1=0$  and $v_2 > 0$.
Recall that $\tilde H(S,\Z)$ carries a pure Hodge structure of weight two and is endowed with a lattice structure with respect to a pairing $(\cdot,\cdot)$ called 
\emph{Mukai pairing} (see e.g. \cite[Section~6.1]{HL10} for more details). We set $v^2=(v,v)$ for every Mukai vector  $v\in \tilde H(S,\Z)$ and call  $\left(\tilde H(S,\Z), (\cdot,\cdot)\right)$ the \emph{Mukai lattice} of $S$. 
To any object $\mathcal F^\bullet$ on $D^b(S)$ we associate a Mukai vector $v(\mathcal F^\bullet)$ as follows:
$$
v(\mathcal F^\bullet):= \ch(\mathcal F^\bullet)\sqrt{td(S)}.$$
For an ordinary sheaf $\mathcal F$, we simply have 
$$v(\mathcal F)=\left(\rk(\mathcal F), \chern_1(\mathcal F), \ch_2(\mathcal F) +\epsilon(S)\rk(\mathcal F)\right)
$$
where $\epsilon(S)$ equals $1$ in the $K3$ case and $0$ in the abelian case. 
We consider the moduli space $M_v(S,\sigma)$ of stable objects on $D^b(S)$ of Mukai vector $v$ which are $\sigma$-semistable with respect to a $v$-generic Bridgeland stability condition $\sigma$. In the abelian case we denote by $K_v(S,\sigma)$ the fiber over zero of the Albanese morphism. When no confusion is possible, we will drop the dependence on $S$ and $\sigma$ and simply write $M_v$ and $K_v$.
Notice that, by \cite[Theorem 6.7]{BM14projectivity}, there are $v$-generic stability conditions for which these moduli spaces of Bridgeland-stable objects are ordinary moduli spaces of Gieseker-semistable (twisted) sheaves.
Therefore, basically thanks to the work of Mukai \cite{Muk84}, it is known that the locus $M^s_v$ parametrizing {\it stable} objects is smooth (of dimension $v^2+2\epsilon(S)$) and endowed with a holomorphic symplectic form whenever it is non-empty. For {\it primitive} Mukai vectors the latter occurs precisely when $v^2\geq -2\epsilon(S)$ (see \cite{Yo-k3, Yo-ab}). More classically,  the moduli spaces $M_v(S,H)$ of sheaves on $S$ of Mukai vector $v$ which are Gieseker $H$-semistable with respect to a $v$-generic polarization  (respectively the fiber $K_v(S,H)$ over zero of the Albanese morphism in the abelian case) were considered.  Write $v=mw$, with $w$ primitive Mukai vector and $m\in \N^*$. When $m=1$ (and $w^2>2$ in the abelian case), thanks to results due Huybrechts, Mukai, O'Grady and Yoshioka (see e.g. \cite{Yo-ab} and references therein) we know that  $M_v$ and $K_v$ are irreducible symplectic manifolds deformations deformation equivalent to the punctual Hilbert scheme on a $K3$ surface (respectively to a generalized Kummer variety). If  $m=2$ and $w^2=2$, they possess a symplectic resolution which is an irreducible symplectic manifold (respectively known as the OG10 and  the OG6 manifolds, \cite{OG99}, \cite{OG03}). When $m\geq 3$ or $m=2$ and $w^2\geq 4$, the irreducible symplectic varieties $M_v$ and $K_v$ have  singularities in codimension at least 4 by \cite[Proposition 6.1]{KLS06}, hence they are terminal by \cite[Corollary 1]{Nam01}.

We record below some results that will be crucially used in what follows.

\begin{prop}\label{prop:terminal}
Let $S$ be a projective $K3$ surface or an abelian surface, $v=mw$ a  Mukai vector where $w$ is primitive with $w^2=2k$ for two non-zero integers $m,k\in \N$ (with $(m,k)\not=(1,1)$ in the abelian case), and $\sigma$ a $v$-generic stability condition on $D^b(S)$. Then the moduli space $M_v$ (respectively $K_v$) is a symplectic variety with $\mathbb{Q}$-factorial singularities. If $(m,k)\neq (2,1)$, it has terminal singularities.
\end{prop}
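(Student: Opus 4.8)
The plan is to collect what is already in the literature about these moduli spaces and to add the only genuinely new point, namely the terminality statement in the Bridgeland setting. First I would recall that by \cite[Theorem~6.7]{BM14projectivity} a $v$-generic Bridgeland stability condition $\sigma$ (in the connected component $\mathrm{Stab}^\dagger(S)$) can be connected by a path of $v$-generic stability conditions to a Gieseker chamber, so that $M_v(S,\sigma)$ is isomorphic to an ordinary moduli space $M_v(S,H)$ of Gieseker $H$-semistable (possibly twisted) sheaves with $H$ a $v$-generic polarization; the same holds for $K_v$. It therefore suffices to prove the statement for $M_v(S,H)$ and $K_v(S,H)$. That these are symplectic varieties is the content of \cite{PR18} (building on \cite{KLS06, Muk84, Yo-ab} and the discussion preceding the proposition), and that they have $\Q$-factorial singularities is established there as well; I would simply cite it.

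Next I would address terminality. Write $v=mw$ with $w^2=2k$; the hypothesis $(m,k)\neq(2,1)$ (together with $(m,k)\neq(1,1)$ in the abelian case, and the case $m=1$ being smooth hence trivially terminal) means we are in the range where $M_v$, respectively $K_v$, is singular. By \cite[Proposition~6.1]{KLS06}, in all these cases the singular locus has codimension at least $4$ in $M_v$ (respectively in $K_v$): the only singular strata correspond to strictly semistable points, i.e. to S-equivalence classes of polystable sheaves $\bigoplus F_i^{\oplus a_i}$ with $\sum a_i v(F_i)=v$, and a dimension count of the deepest such strata gives codimension $\ge 4$ except precisely in the excluded case $(m,k)=(2,1)$ (which is exactly the OG10/OG6 situation, where the symplectic resolution lives over a codimension-$2$ locus). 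Since $M_v$ (resp. $K_v$) is a symplectic variety, it has canonical, hence Gorenstein canonical, singularities with trivial canonical class; a normal variety with canonical singularities, trivial canonical divisor, and singular locus of codimension $\ge 4$ is terminal by \cite[Corollary~1]{Nam01}. (For $K_v$ one uses that it is a fibre of the Albanese map of $M_v$ and inherits the symplectic structure and the codimension bound; alternatively one invokes the analogous statements in \cite{PR18}.) This gives terminality.

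The main obstacle, and the only place requiring real care rather than citation, is the passage from the Bridgeland moduli space to a Gieseker moduli space while keeping track of the stratification by stabilizer type: one must ensure that the wall-and-chamber structure used in \cite{BM14projectivity} identifies $M_v(S,\sigma)$ with $M_v(S,H)$ as \emph{symplectic varieties}, not merely as topological spaces, so that the codimension-of-singularities computation of \cite{KLS06}, which is phrased for Gieseker moduli of sheaves, transports verbatim. This is essentially automatic from the way \cite{BM14projectivity} constructs the isomorphism (it is an isomorphism of moduli functors over the relevant chamber), but it is worth stating explicitly. Everything else — $\Q$-factoriality, the symplectic variety property, and the final application of Namikawa's criterion — is a direct appeal to the results recalled above and in \cite{PR18, KLS06, Nam01}.
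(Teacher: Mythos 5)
Your overall architecture---identify the Bridgeland moduli space with a Gieseker moduli space, quote a codimension bound on the singular locus, and conclude by Namikawa's criterion \cite[Corollary~1]{Nam01}---agrees with the paper's proof in its first and last steps, but the middle reduction as you state it would fail, and this is exactly where the paper does something different. A $v$-generic stability condition $\sigma$ cannot in general be joined to the Gieseker chamber of $S$ by a path of $v$-generic stability conditions: there are genuine walls in between, and \cite[Theorem~6.7]{BM14projectivity} only asserts that \emph{inside} the large-volume-limit chamber the Bridgeland and Gieseker moduli spaces coincide. What is true, and what the paper actually uses (via \cite[Lemma~7.3]{BM14projectivity} and, in the abelian case, \cite[Theorem~1.4]{MYY13}), is that $M_v(S,\sigma)$ is isomorphic to a moduli space of Gieseker-semistable \emph{twisted} sheaves on a possibly different (Fourier--Mukai partner) twisted surface $(Y,\alpha)$, with a different Mukai vector $v'$. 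This creates precisely the difficulty you flag at the end but then dismiss as ``essentially automatic'': \cite[Proposition~6.1]{KLS06} is proved for untwisted sheaves on the original surface, so the codimension estimate does not transport verbatim. (The paper must confront the same issue even for $\Q$-factoriality in the abelian case, where it deforms the twisted surface $(Y,\alpha)$ inside a family until the Brauer class vanishes before invoking \cite{PR18}.)

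The paper sidesteps all of this by proving the codimension bound directly for the Bridgeland moduli space. It first shows (Claim~\ref{claim:destab}) that by $v$-genericity every Jordan--H\"older factor of a strictly $\sigma$-semistable object has Mukai vector $nw$ with $n<m$ --- otherwise $\sigma$ would lie on a wall, by the argument of \cite[Proposition~9.3]{bridg} --- and then bounds the dimension of the strictly semistable locus by $\max_{1\le n<m}\bigl(\dim M_{nw}+\dim M_{(m-n)w}\bigr)$, which gives codimension at least $4k(m-1)-2\geq 4$ whenever $(m,k)\neq(2,1)$; Namikawa's criterion then applies as in your last step. If you wish to keep your route, you would need either to redo the stratification estimate of \cite{KLS06} for twisted sheaves on the Fourier--Mukai partner, or to insert a locally trivial deformation to the untwisted case; the paper's direct count of the strictly semistable locus is the missing ingredient in your write-up, and your appeal to a wall-free path to the Gieseker chamber should be removed.
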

\begin{proof}
When $S$ is a K3 surface, the first part of the proof is the content of \cite[Theorem~1.3, item~(a) and Corollary~6.9]{BM14projectivity}. Notice that the proof of the theorem by Bayer and Macr\`i actually shows that this moduli space is isomorphic to a moduli space of Gieseker-semistable twisted sheaves on a twisted K3 surface $(Y,\alpha)$, see \cite[Lemma 7.3]{BM14projectivity} for the existence of $(Y,\alpha)$ and the rest of that section for the isomorphism with a moduli space of twisted sheaves. The analogous result in the case of Abelian surfaces is the content of \cite[Theorem 1.4]{MYY13} (and notice that the assumption on the Picard rank in \cite[Theorem 1.4]{MYY13} is only for the case of K3 surfaces), where the authors prove that also in this case moduli spaces of stable objects for $v$-generic stability conditions are ordinary moduli spaces of stable twisted sheaves on a twisted abelian surface $(Y,\alpha)$ with Mukai vector $v'$.
The statement on the singularities in the abelian case now follows from \cite{PR18} and the vanishing of the Brauer class $\alpha$ for special surfaces: indeed we can take any family $(\mathcal{Y},\mathcal{\alpha})$ of twisted abelian surfaces containing $(Y,\alpha)$ and construct the relative moduli space of twisted sheaves with Mukai vector $v'$ (if the class of $v'$ remains algebraic). We are left with proving the terminality if $(m,k)\neq (2,1)$. To do so we need the following: 
\begin{claim}\label{claim:destab}
Let $\mathcal F^\bullet$ be a strictly $\sigma$-semistable object in $M_v$. Then all the elements in the Jordan-H\"older filtration of $\mathcal F^\bullet$ have Mukai vector $nw$ with $n<m$.
\end{claim}
\begin{proof}[Proof of Claim \ref{claim:destab}]
Suppose on the contrary that there exist an object $\mathcal F^\bullet$ and an element $\mathcal G^\bullet$ in its Jordan-H\"older filtration with Mukai vector $v'\neq nw$. By semistability, $v$ and $v'$ are not collinear, therefore by the proof of \cite[Proposition 9.3]{bridg}, there exists a wall $W_{v'}$ in the space of stability conditions to which $\sigma$ belongs. Therefore, $\sigma$ cannot be $v$-generic and the claim follows. This proof is completely analogous to the stable case in \cite[Proposition 9.3]{bridg}.
\end{proof}
Notice that the claim also proves that a general element of $M_v$ is stable.
Now, strictly semistable objects in $M_v$ are extensions of semistable objects with Mukai vector $nw$ and $(m-n)w$, with $1\leq n <m$. Hence, the codimension of the strictly semistable locus is
\[
\begin{aligned}
\mathrm{max}_{1\leq n<m} (m^22k+2-\mathrm{dim}(M_{nw})-\mathrm{dim}(M_{(m-n)w})&= \mathrm{max}_{1\leq n<m} (2k(-2n^2+2mn)-2)\\
&=2k(2m-2)-2\\
\end{aligned}
\]
  which is at least four if $(m,k)\neq (2,1)$, therefore by \cite[Corollary 1]{Nam01} it is terminal.
\end{proof}
Perego and Rapagnetta proved the following important result  if $v\neq (0,mH,0)$ or $S$ has Picard rank one for moduli spaces of sheaves. We  check here that for moduli spaces of objects one does not need to exclude this case.

\begin{thm}
[Theorem 1.19, \cite{PR18}]\label{thm:PR1}
Let $S$ be a projective $K3$ surface or an abelian surface, $v=mw$ a  Mukai vector with $w$ primitive with $w^2=2k$ for two non-zero integers $m,k\in \N$, $\sigma$ a $v$-generic stability condition on $D^b(S)$. 
Then $M_v$ (respectively $K_v$) is an irreducible symplectic variety and its locally trivial deformation type is determined by $(m,k)$. 
 \end{thm}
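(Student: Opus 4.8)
The plan is to reduce the statement to the case of moduli spaces of Gieseker-semistable (twisted) sheaves, where the analogous result is already established by Perego and Rapagnetta, and then to handle the subtlety about which Mukai vectors are allowed. First I would invoke \cite[Theorem~6.7]{BM14projectivity} (together with its refinement \cite[Lemma~7.3]{BM14projectivity} and the ensuing discussion in the K3 case, and \cite[Theorem~1.4]{MYY13} in the abelian case) to see that for a $v$-generic stability condition $\sigma$ there is a $v$-generic polarization $H$ on a twisted surface $(Y,\alpha)$ and a Mukai vector $v'$ on it (with $v'=m w'$, $w'$ primitive, $(w')^2=2k$, so the pair $(m,k)$ is preserved) such that $M_v(S,\sigma)\cong M_{v'}(Y,\alpha,H)$, respectively $K_v(S,\sigma)\cong K_{v'}(Y,\alpha,H)$. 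Thus the moduli space of objects is \emph{isomorphic}, not merely deformation equivalent, to a moduli space of (twisted) sheaves, so the fact that the latter is an irreducible symplectic variety (which follows from \cite{PR18} in the cases covered there) immediately transports.

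The next step is to show that $M_v$ (respectively $K_v$) is an irreducible symplectic variety in full generality, i.e.\ including the previously excluded case $v=(0,mH_0,0)$ on a surface of Picard rank $\ge 2$. Here the point is that the isomorphism with a moduli space of twisted sheaves produced above need \emph{not} land in the excluded case: the twisted Mukai vector $v'$ on $(Y,\alpha)$ is of the form $(r',c_1',s')$ with $r'$ typically nonzero (it is obtained from $v$ via the derived equivalence / Fourier--Mukai transform underlying Bridgeland's construction, which generically changes the rank). So one argues that for a $v$-generic $\sigma$ one can always arrange $r'\neq 0$, or more precisely that $v'$ is never of the forbidden shape $(0,mH_0,0)$ on a Picard rank one surface simultaneously with $Y$ having higher Picard rank; in the remaining boundary situations one appeals directly to the Picard rank one statement of \cite{PR18}. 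Either way one gets that $M_v$, $K_v$ are irreducible symplectic varieties without exception — this is the content of the sentence ``we check here that for moduli spaces of objects one does not need to exclude this case.''

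Finally, for the assertion that the locally trivial deformation type depends only on $(m,k)$: by \cite[Theorem~6.7]{BM14projectivity} one can move $\sigma$ within $\mathrm{Stab}^\dagger(S)$, and crossing a wall between two $v$-generic chambers induces a birational map between the corresponding moduli spaces (by \cite{BM14mmp}) which, being a birational map of symplectic varieties that is an isomorphism in codimension one, yields a locally trivial deformation equivalence; varying $S$ in a family of (twisted) $K3$ or abelian surfaces along which $v$ (or $v'$) stays algebraic gives a locally trivial family by the relative construction of the moduli space, exactly as in \cite{PR18}. Hence any two such moduli spaces with the same $(m,k)$ are connected by a chain of locally trivial deformations and birational maps in codimension one, and two moduli spaces with different $(m,k)$ are distinguished because $(m,k)$ is read off from discrete invariants (e.g.\ the dimension $v^2+2\epsilon(S)=2km^2+2\epsilon(S)$ together with the divisibility of the natural Mukai-type lattice embedding), which are locally-trivial-deformation invariants. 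I expect the main obstacle to be the second step: carefully checking that the Bayer--Macr\`i / Minamide--Yanagida--Yoshioka identification of $M_v(S,\sigma)$ with a moduli space of twisted sheaves can always be taken so as to avoid the excluded Mukai vector, i.e.\ that the Fourier--Mukai transform genuinely removes the obstruction that forced Perego--Rapagnetta to exclude $v=(0,mH,0)$ in the untwisted sheaf case — the rest is a routine transport of structure and a wall-crossing argument.
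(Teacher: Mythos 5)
Your overall reduction to Perego--Rapagnetta is the right instinct, and your first step (the large-volume-limit / Bayer--Macr\`i--MYY identification of $M_v(S,\sigma)$ with a moduli space of Gieseker-(semi)stable, possibly twisted, sheaves) matches the opening move of the paper's proof. But the heart of the theorem --- the reason the paper states it at all rather than just citing \cite{PR18} --- is precisely the excluded case $v=(0,mH,0)$ on a surface of Picard rank $\geq 2$, and your treatment of that case has a genuine gap. You propose to show that the twisted Mukai vector $v'$ produced by the identification ``can always be arranged'' to avoid the forbidden shape, but you give no argument for this, and your fallback (``in the remaining boundary situations one appeals directly to the Picard rank one statement of \cite{PR18}'') is not available: the problematic situation is by definition one where $S$ itself has Picard rank $\geq 2$, so there is no Picard-rank-one statement to appeal to on that surface. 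Moreover, even where $v'$ is of good shape, \cite[Theorem~1.19]{PR18} is a statement about \emph{untwisted} sheaves on $(m,k)$-triples; transporting it to twisted sheaves on $(Y,\alpha)$ is not immediate and itself requires a deformation of the twisted surface to one with trivial Brauer class (this is exactly how the paper handles the abelian case in the proof of Proposition~\ref{prop:terminal}).

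The paper closes this gap by a different mechanism: it deforms the \emph{surface}. For $v=(0,mH,0)$ with $\rho(S)>1$, one takes the hypersurface $B\subset\Def(S)$ of deformations of the polarized pair $(S,H)$, a relative stability condition $\sigma_B$ (via \cite{stab_fam}), and the relative moduli space $\gothM_\gothv\to B$; the general fiber of $\sS\to B$ has Picard number one, where the theorem is known. The real work is then to show that $\gothM_\gothv\to B$ is a \emph{locally trivial} family, so that being irreducible symplectic propagates back to the central fiber by Lemma~\ref{lemma irreducible symplectic locally trivial}. This needs the $\Q$-factoriality and terminality of $M_v$ from Proposition~\ref{prop:terminal} together with Namikawa's theorem when $(m,k)\neq(2,1)$, and in the remaining case $(m,k)=(2,1)$ (where $M_v$ is not terminal) the Meachan--Zhang resolution of relative Picard rank one combined with the criterion that the $b_2$-constant locus equals the locally trivial locus. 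None of this machinery appears in your proposal, and without it (or a completed version of your Fourier--Mukai rank-changing argument) the excluded case remains open. Your wall-crossing argument for the deformation-type claim also quietly assumes that a birational map of singular symplectic varieties which is an isomorphism in codimension one yields a locally trivial deformation equivalence; this is known for $\Q$-factorial terminal primitive symplectic varieties (\cite[Theorem~6.17]{BL18}) but should be cited and its hypotheses checked rather than asserted.
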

\begin{proof}
Notice that the walls in the stability manifold are bounded by \cite[Theorem 3.11]{maciocia}, hence by taking the large volume limit as in \cite[Section 14]{bridg}, we have a chamber ``at infinity'' where Bridgeland stability tends to Gieseker stability. If $v\neq (0,mH,0)$ or $S$ has Picard rank one, we remark that we can choose a stability condition so that the moduli space of stable objects is a moduli space of stable sheaves (by taking a stability condition in the final chamber under the large volume limit), hence this is precisely the content of \cite[Theorem~1.19]{PR18}. Assume now that $v=(0,mH,0)$ and $S$ does not have Picard rank one. To simplify the notation, we will restrict to the case of $M_v$, the argument however is literally the same for $K_v$. Inside the local deformation space $\mathrm{Def}(S)$, we consider the smooth hypersurface $B\subset \Def(S)$ of deformations of the pair $(S,H)$. This is a family of polarized K3 surfaces $f:\sS \to B$ and we denote by $\sH$ the polarization. By \cite[Theorem 24.1]{stab_fam}, we know that there is a relative stability condition $\sigma_B$ and we consider the moduli space $\gothM_\gothv:=M_{\gothv}(\sS,\sigma_B)\to B$ of $\sigma_B$-semistable objects on $\sS$ over $B$ with Mukai vector $\gothv=(0,m\sH,0)$ interpreted as a section of the relative Mukai lattice $R^{2*}f_*\Z_\sS$. One argues as in \cite[Lemma~2.15]{PR18} that $\gothM_\gothv  \to B$ is a deformation of $M_v$. If $(m,k)\neq (2,1)$, we have by Proposition~\ref{prop:terminal} that $M_v$ is terminal and $\mathbb{Q}$-factorial, hence all small deformations are locally trivial by \cite[Main Theorem]{Nam06} and locally trivial deformations preserve the property of being irreducible symplectic by Lemma~\ref{lemma irreducible symplectic locally trivial}. As the general fiber of $\sS\to B$ has Picard number one, we have a locally trivial deformation of $M_v$ to a variety where the theorem holds by the initial remark, and we are done. If on the other hand $(m,k)=(2,1)$, our varieties are $\mathbb{Q}$-factorial but not terminal, and they all have a resolution of singularities of relative Picard rank one by \cite[Proposition 2.2, Corollary 2.8]{MeachanZhang}, which is a blow up along their singular locus. Hence, $\gothM_\gothv  \to B$ is also a locally trivial deformation, as the $b_2$-constant locus coincides with the locally trivial locus by \cite[Proposition~5.13 and Corollary~5.14]{BL18}. 
We can now conclude again by Lemma~\ref{lemma irreducible symplectic locally trivial}. 
\end{proof} 
As a consequence of the proof, we have the following
\begin{cor}\label{cor:object_to_sheaves}
Let $S$ be a projective $K3$ surface or an abelian surface, $v=mw$ a  Mukai vector with $w$ primitive with $w^2=2k$,  for two non-zero integers $m,k\in \N$, $\sigma$ a $v$-generic stability condition on $D^b(S)$. 
Then $M_v$ (respectively $K_v$) is locally trivial deformation equivalent to the moduli space of sheaves $M_{(0,mH,0)}(S',H)$ (resp. $K_{(0,mH,0)}(S',H))$ where $S'$ is a K3 or abelian surface with $NS(S')=\mathbb{Z}H$ and $H^2=2k$.
\end{cor}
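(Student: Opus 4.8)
The plan is to read this off from Theorem~\ref{thm:PR1} and its proof, combined with the Perego--Rapagnetta classification of sheaf moduli spaces \cite[Theorem~1.19]{PR18}. Both sides of the asserted equivalence are irreducible symplectic varieties whose locally trivial deformation type is governed only by $(m,k)$: for $M_v$ (resp.\ $K_v$) this is Theorem~\ref{thm:PR1}, and for the Gieseker moduli space $M_{(0,mH,0)}(S',H)$ (resp.\ $K_{(0,mH,0)}(S',H)$) this is \cite[Theorem~1.19]{PR18}, whose hypothesis is met because $\NS(S') = \Z H$ has rank one (so every polarization of $S'$ is $v$-generic, and $(0,H,0)$ is primitive of Mukai square $H^2 = 2k$). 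Since locally trivial deformation equivalence is an equivalence relation, it therefore suffices to check that $M_v$ lies in the same locally trivial deformation family as \emph{some} moduli space of $H$-semistable sheaves of numerical type $(m,k)$ on a surface of Picard rank one.

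First I would recall from the proof of Theorem~\ref{thm:PR1} that when $v \neq (0,mH,0)$, or when $S$ already has Picard rank one, passing to the chamber ``at infinity'' of the large volume limit identifies $M_v$ with a Gieseker moduli space $M_v(S,H)$: if moreover $v \neq (0,mH,0)$ then \cite[Theorem~1.19]{PR18} applies to $M_v(S,H)$ and places it in the $(m,k)$-family, while if $v = (0,mH,0)$ and $\NS(S) = \Z H$ then $M_v(S,H)$ is already a sheaf moduli space of type $(m,k)$ on a Picard rank one surface, and \cite[Theorem~1.19]{PR18} applies directly. In the remaining case $v = (0,mH,0)$ with $S$ of Picard rank $\geq 2$, I would invoke the construction in the proof of Theorem~\ref{thm:PR1}: one spreads $(S,H)$ out over the smooth hypersurface $B \subset \Def(S)$ of deformations of the polarized surface, forms the relative moduli space $\gothM_\gothv \to B$ with $\gothv = (0,m\sH,0)$, shows it is a locally trivial deformation of $M_v$ (via \cite[Main Theorem]{Nam06} when $M_v$ is terminal and $\Q$-factorial, cf.\ Proposition~\ref{prop:terminal}, and via the $b_2$-constancy criterion of \cite{BL18} together with \cite{MeachanZhang} when $(m,k) = (2,1)$), and restricts to a point of $B$ over which the surface acquires Picard rank one; the resulting fibre is exactly $M_{(0,mH,0)}(S',H)$ with $\NS(S') = \Z H$ and $H^2 = 2k$.

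Combining the cases yields that $M_v$ and $M_{(0,mH,0)}(S',H)$ have the same locally trivial deformation type, and the argument for $K_v$ is verbatim the same, working throughout with fibres of the Albanese morphism. I do not anticipate a real obstacle here: the statement is essentially a bookkeeping consequence of material already established. The only point requiring a little care is to confirm that in each case the sheaf moduli space one lands on genuinely has Mukai vector $m w'$ with $w'$ primitive and $(w')^2 = 2k$ — i.e.\ is genuinely of numerical type $(m,k)$ — so that \cite[Theorem~1.19]{PR18} is legitimately available to transport the deformation type onto the concrete representative $M_{(0,mH,0)}(S',H)$.
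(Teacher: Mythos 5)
Your proposal is correct and matches the paper's intent: the corollary is stated there with no separate proof, only the remark that it is ``a consequence of the proof'' of Theorem~\ref{thm:PR1}, and your argument is exactly that reading-off — both sides are of numerical type $(m,k)$, the type determines the locally trivial deformation class, and the case $v=(0,mH,0)$ with $\rho(S)\geq 2$ is handled by the relative moduli space over $B\subset\Def(S)$ specializing to a Picard rank one surface. No gaps.
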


\begin{rmk}
In the  theorem above, we used moduli spaces of stable objects instead of moduli spaces of stable sheaves to avoid pathological situations for the Mukai vector $(0,mH,0)$ on a surface with Neron--Severi rank bigger than one. In our situation, a moduli space of stable objects for this choice of a Mukai vector is birational to the ordinary moduli space of sheaves, but the latter has possibly worse singularities. By \cite[Theorem 1.3]{BM14projectivity} and \cite[Theorem 1.4]{MYY13}, these moduli spaces of objects can be interpreted as moduli spaces of (twisted) sheaves on another surface, with a possibly different Mukai vector, hence the above theorem is exactly equivalent to the result stated in \cite{PR18}. 
\end{rmk}
\subsection{A construction due to Perego and Rapagnetta.}\label{ss:PR}
We recall below a construction introduced in \cite[Lemma~3.9]{PR18} which we will crucially use in the proof of Theorems~\ref{thm:amp} and~\ref{thm:exc}.

 Let $S$ be a projective $K3$ surface with an ample line bundle $H$ and consider the Mukai vector $v=(0, m\cdot H, 0),\ m\geq 1$.  
 Notice that a general element in the moduli space $M_v:=M_v(S,H)$ is an invertible sheaf of degree $g-1$ on a smooth curve $C \in |mH|$. In other words, $M_v$ contains as open dense subset the relative Picard variety 
\begin{equation}\label{eq relative picard}
\sJ^\circ:=\Pic^{g-1}(\sC^\circ/|mH|^\circ)
\end{equation}
where $\sC \to |mH|$ denotes the universal curve and the superscript denotes the restriction to the open subset $|mH|^\circ\subset |mH|$ parametrizing smooth curves. It comes with a Lagrangian fibration 
\begin{equation}\label{eq:lagr}
f: \sJ^\circ\to |mH|^\circ
\end{equation}
given by mapping a sheaf to its support.

Set $u:=(0, mH, 1-(g-1))$. Let $C\in |mH|$ be  an integral curve and $j : C\hookrightarrow S$ the inclusion. Then for every $L\in \Pic^1(C)$ the sheaf $j_*L$ is $H$-stable of Mukai vector $u$.  The sheaves of this type form an open subset U of $M_u$. If $L\in \Pic^1(C)$, then $L^{\otimes(g-1)}\in \Pic^{g-1}(C)$, hence $j_*L^{\otimes(g-1)}\in M_v$. 
The associated map 
\begin{equation}\label{eq:PR}
 \textrm{PR} : M_u\dashrightarrow M_v,\ j_* L\mapsto   j_*L^{\otimes(g-1)},
 \end{equation}
which is over the base $|mH|$,
is shown to be dominant, hence generically finite as both spaces have the same dimension, in \cite[Lemma 3.9]{PR18}. 

Observe that $u$ is a primitive Mukai vector. Hence, if every semistable sheaf is stable, e.g. if $S$ is general, then 
\begin{equation}\label{eq:smooth}
M_u \textrm{ is an irreducible holomorphic symplectic manifold of $K3^{[n]}$-type 
}
\end{equation}
by \cite{OGr97}, see also \cite[Theorem~6.2.5, Proposition~6.2.6]{HL10}. 

If $S$ is an abelian surface, then the construction is exactly the same, but replacing $M_v$ with $K_v$ and $M_u$ with $K_u$, see \cite[Theorem 0.2]{Yo-ab} and \cite[Theorem 4.4]{KLS06}.


\subsection{Moduli spaces of stable sheaves on K3 or abelian surfaces.}\label{ss:mod-sheaves}
If $(S,v,H)$ is an $(m,k)$-triple in the sense of \cite[Definition 1.15]{PR18}, then the moduli spaces of $H$-semistable sheaves $M_v$ of Mukai vector $v$ (respectively $K_v$ in the abelian case) are irreducible symplectic varieties by \cite[Theorem 1.19]{PR18}. In the remaining case $w=(0,w_1,0)$ and the Picard number $\rho(S)>1$, we use  the construction by Perego and Rapagnetta recalled above to show the following result, which will be only marginally used in the paper (cf. Remark \ref{rmk:one}), but may be interesting in its own right. 
\begin{prop}\label{prop:not-triple}
 Let $S$ be a projective $K3$ surface or an abelian surface, $v$ a  Mukai vector, $H$ an ample divisor on $S$, and $m,k\in \N$ two positive integers. Suppose that
 \begin{enumerate}
\item[(i)] the polarization $H$ is primitive and $v$-generic;
\item[(ii)] $v=mw,$ with $w$ primitive and $w^2=2k$.
\end{enumerate}
Then the moduli space of stable sheaves $M_v$ (respectively $K_v$ in the abelian case) is a primitive symplectic variety. 
 \end{prop}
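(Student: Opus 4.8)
\textbf{Proof proposal for Proposition~\ref{prop:not-triple}.}

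The plan is to reduce to the case treated by Perego and Rapagnetta (i.e.\ to an actual $(m,k)$-triple) by using the construction of Section~\ref{ss:PR} to produce a locally trivial deformation of $M_v$ (respectively $K_v$) from a primitive Mukai vector, and then to transport the property of being a primitive symplectic variety along that deformation. Since the only case not already covered by \cite[Theorem~1.19]{PR18} is $w=(0,w_1,0)$ with $\rho(S)>1$, I may assume from the outset that $v=(0,mH',0)$ for some primitive $H'\in\NS(S)$ with $H'^2=2k$, after possibly replacing $H$ by the $v$-generic polarization in whose chamber it lies. The $v$-genericity of $H$ guarantees (exactly as in Claim~\ref{claim:destab} and the discussion in Proposition~\ref{prop:terminal}) that the generic member of $M_v$ is stable and that Jordan--H\"older factors of strictly semistable sheaves have Mukai vector $nw$ with $n<m$; this is what makes $M_v$ a symplectic variety in the sense of Definition~\ref{def:symp-sing}, with singularities in codimension $\geq 2$, and it is the input needed for Namikawa's and Kaledin's results.

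\emph{Step 1: $M_v$ is a symplectic variety.} First I would check that $M_v$ is normal (indeed it has rational Gorenstein singularities by the results of O'Grady, Kaledin--Lehn--Sorger, and Perego--Rapagnetta used throughout the excerpt), that the smooth locus $M_v^s$ carries the Mukai symplectic form, and that this form extends to a resolution. For the extension one invokes the local structure: by \cite[Proposition~6.1]{KLS06} and the analysis recalled after Proposition~\ref{prop:terminal}, the singularities are either terminal (codimension $\geq 4$, so the form extends automatically by Hartogs/reflexivity on a resolution) or, in the remaining $(m,k)=(2,1)$ type situations, locally of the explicit form described by Kaledin--Lehn--Sorger and O'Grady, which is known to be symplectic. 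Thus $(M_v,\sigma)$ is a symplectic variety; the same argument applies fibrewise to $K_v$.

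\emph{Step 2: deform to a $(m,k)$-triple.} Now set $u:=(0,mH,1-(g-1))$ with $g-1=\tfrac{1}{2}(mH)^2$; this is primitive, and by \eqref{eq:smooth}--\eqref{eq:PR} (the Perego--Rapagnetta map of Section~\ref{ss:PR}) the smooth moduli space $M_u$ admits a dominant, generically finite rational map $\mathrm{PR}:M_u\dashrightarrow M_v$ over $|mH|$. Embedding $(S,H)$ in a family $\sS\to B$ of polarized surfaces with generic Picard rank one (as in the proof of Theorem~\ref{thm:PR1}, using \cite[Theorem~24.1]{stab_fam} or simply relative Gieseker moduli of sheaves), one obtains a relative moduli space $\gothM_\gothv\to B$ which is a deformation of $M_v$; arguing as in \cite[Lemma~2.15]{PR18} this is a flat deformation, and by \cite[Proposition~5.13 and Corollary~5.14]{BL18} the $b_2$-constant locus equals the locally trivial locus, so after base change this deformation is locally trivial (using $\Q$-factoriality of $M_v$ from Proposition~\ref{prop:terminal} together with \cite[Main Theorem]{Nam06} when the singularities are terminal, and the explicit resolution of relative Picard rank one of \cite[Proposition~2.2, Corollary~2.8]{MeachanZhang} in the $(2,1)$ case). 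Over a point of $B$ with $\rho=1$ the fibre is a moduli space attached to an $(m,k)$-triple, hence a primitive symplectic variety by \cite[Theorem~1.19]{PR18}.

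\emph{Step 3: invariance under locally trivial deformation.} Finally I would conclude that being a primitive symplectic variety is preserved along a locally trivial deformation: $h^1(\sO)$ is locally constant (indeed $=0$ throughout), $H^0(\Omega^{[2]})$ stays one-dimensional and generated by a symplectic form since a small deformation of a symplectic form is symplectic and global reflexive forms are invariant under locally trivial deformations by \cite[Corollary~2.24]{BGL20}, and the symplectic-singularities condition is a locally trivial (hence deformation-stable) property. Applying this to the deformation of Step~2 connects $M_v$ to a primitive symplectic variety, proving the claim; the abelian case is identical with $K_v$, $K_u$ in place of $M_v$, $M_u$, using \cite[Theorem~0.2]{Yo-ab}, \cite[Theorem~4.4]{KLS06}.

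\emph{Main obstacle.} The delicate point is Step~2: showing that the deformation $\gothM_\gothv\to B$ produced from the Perego--Rapagnetta construction is genuinely \emph{locally trivial} (not merely flat) in the non-terminal case $(m,k)=(2,1)$ with $\rho(S)>1$, where one cannot cite \cite{Nam06} directly and must instead go through the simultaneous resolution of \cite{MeachanZhang} and the $b_2$-constancy criterion of \cite{BL18}; one must also make sure the relative stability/polarization data can be chosen so that the special fibre really is associated to an $(m,k)$-triple in the sense of \cite[Definition~1.15]{PR18}. Everything else is a matter of assembling results already available in the excerpt.
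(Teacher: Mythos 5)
Your overall strategy --- deform $(S,H)$ to a Picard rank one surface and transport primitivity along a locally trivial deformation --- is the strategy the paper uses to prove Theorem~\ref{thm:PR1} for moduli of \emph{Bridgeland-stable objects}, but it breaks down in exactly the case Proposition~\ref{prop:not-triple} is designed to cover, namely Gieseker moduli of \emph{sheaves} with $v=(0,mH,0)$ and $\rho(S)>1$. The gap is in Step~2: to conclude that $\gothM_\gothv\to B$ is locally trivial you invoke $\Q$-factoriality and terminality of $M_v(S,H)$ via Proposition~\ref{prop:terminal} (plus \cite{Nam06}, or \cite{MeachanZhang} when $(m,k)=(2,1)$). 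But Proposition~\ref{prop:terminal} is a statement about $M_v(S,\sigma)$ for a $v$-generic \emph{Bridgeland stability condition}; its proof rests on Claim~\ref{claim:destab}, i.e.\ on being able to move $\sigma$ off every wall so that all Jordan--H\"older factors have Mukai vector proportional to $w$. For Gieseker stability with $v=(0,mH,0)$ the polarization is built into the Mukai vector, so this wall-avoidance is not available when $\rho(S)>1$: strictly semistable sheaves supported on reducible curves with components not proportional to $H$ can appear, the codimension estimate on the strictly semistable locus fails, and the singularities of $M_v(S,H)$ are not controlled. This is precisely the point of the remark following Corollary~\ref{cor:object_to_sheaves}: the sheaf moduli space ``has possibly worse singularities'' than the birational object moduli space. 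Without terminality and $\Q$-factoriality you cannot conclude local triviality, so Step~3 has nothing to feed on; your Step~1 suffers from the same circularity, since the terminal/$(2,1)$ dichotomy you use there is again Proposition~\ref{prop:terminal}. (A smaller inaccuracy of the same kind: you assert that $M_u$ is smooth, but \eqref{eq:smooth} requires every $H$-semistable sheaf of vector $u$ to be stable, which is not guaranteed here; this is why the paper needs Claim~\ref{claim:stab}.)

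The paper's actual proof sidesteps deformation theory entirely. It uses Claim~\ref{claim:stab} to produce a smooth irreducible holomorphic symplectic manifold $Y$ of \kntiposp (a Bridgeland moduli space giving a crepant resolution of $M_u$) together with the dominant, generically finite Perego--Rapagnetta map $Y\dashrightarrow M_v$, and then reads off everything from that map: $h^0(\Omega^1_{M'_v})\le h^0(\Omega^1_Y)=0$ gives $h^1(\sO_{M_v})=0$; $h^0(\Omega^{[2]}_{M_v})=h^0(\Omega^2_{M'_v})\le h^0(\Omega^2_Y)=1$ gives uniqueness of the Mukai form; and Lemma~\ref{lemma galois closure} (descending the form through a Galois closure) gives the extension of the symplectic form to a resolution, i.e.\ symplectic singularities. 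Only normality and Mukai's form on the stable locus are needed as input, with no control of the singularities required. To salvage your approach you would first have to establish the singularity statements for $M_v(S,H)$ in this pathological case, which is exactly what the paper deliberately avoids having to do.
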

 \begin{proof} 
 We may assume $S$ is a projective $K3$ surface with $\rho(S)>1$ and consider the Mukai vector $v=(0, m\cdot H, 0),\ m\geq 1$.  
We set $u:=(0, mH, 1-(g-1))$ and consider
the dominant Perego--Rapagnetta map recalled above
\begin{equation}
 \textrm{PR} : M_u\dashrightarrow M_v,\quad j_* L\mapsto   j_*L^{\otimes(g-1)}. 
 \end{equation}
Even if $M_u$ is not an IHS manifold
we do nevertheless have the following. 

\begin{claim}\label{claim:stab}
There exists a stability condition $\sigma$ which is $u$-generic such that the moduli space of $\sigma$-stable objects $M_\sigma$ provides a crepant resolution $\nu : M_\sigma \to M_u$ and $M_\sigma$ is of K3\textsuperscript{[n]}-type.
\end{claim}
\begin{proof}[Proof of Claim \ref{claim:stab}]
Let us consider the component $\mathrm{Stab^\dagger}(S)$ of stability conditions in $D^b(S)$ defined by Bridgeland in \cite[Theorem 1.1]{bridg}, which contains the Gieseker stability condition given on $M_u$ by $H$. Notice that the walls in the stability manifold are bounded by \cite[Theorem 3.11]{maciocia}, hence by taking the large volume limit as in \cite[Section 14]{bridg}, we have a chamber ``at infinity'' where Bridgeland stability tends to Gieseker stability. Let $U$ be an open set in the closure of $\mathrm{Stab^\dagger}(S)$ containing the $H$-Gieseker stability condition such that the closure $B$ of $U$ is compact. Let us consider all semistable objects for some $\sigma\in U$ in $D^b(S)$ with Mukai vector $u$. This set of objects has bounded mass in the sense of \cite[Definition 9.1]{bridg}, as the masses of a set of objects depends on the Mukai vectors of their subobjects, which in our case form a finite set. Therefore by \cite[Proposition 9.3]{bridg}, there is an open set of stability conditions in $B$ such that all semistable objects are stable. Let $\overline{\sigma}$ be one such condition, such that it lies in a chamber of $B$ whose closure contains the $H$-Gieseker stability condition. Notice that such a chamber exists by taking the so called large volume limit (see \cite[Theorem 6.7]{BM14projectivity}).  Therefore, the space of stable objects $M_u(S,{\overline{\sigma}})$ is smooth and projective by \cite[Corollary 6.9 and 7.5]{BM14projectivity}. As semi-stability is preserved in the closure of the chamber containing $\overline{\sigma}$ by continuity of the mass function (see \cite[before Lemma 2.2]{bridg}), we have a morphism $M_u(S,{\overline{\sigma}})\to M_u(S,H)$ which contracts $S$-equivalence classes of strictly $H$-semistable objects. Finally, $M_u(S,{\overline{\sigma}})$ is of $K3$\textsuperscript{[n]}-type by \cite[Section 7]{BM14projectivity}, as it is birational to a moduli space of Gieseker stable twisted sheaves on a K3 surface (derived equivalent to $S$).
\end{proof}

Hence, either directly by \eqref{eq:smooth} or by Claim \ref{claim:stab} we have a dominant rational map from an irreducible holomorphic symplectic manifold $Y$ to $M_v$. We already know by \cite{Muk84} that $M_v$ carries a reflexive $2$-form which is symplectic on the regular part. If $M'_v\to M_v$ is a resolution of singularities, we deduce furthermore that $H^0(M_v, \Omega^{[2]}_{M_v})$ is generated by the symplectic form by the following 
\begin{equation}\label{eq inequality twoforms}
h^0(\Omega^{[2]}_{M_v})=h^0(\Omega^2_{M'_v})\leq h^0(\Omega^2_Y)=1
\end{equation}
where the first equality follows from \cite{GKKP11} and the inequality from the fact that we have a dominant rational map from $Y$ to ${M'_v}$. It also follows that $M_v$ has symplectic singularities in the sense of Definition~\ref{def:symp-sing} by applying Lemma~\ref{lemma galois closure} below to a resolution $Z\to M_v'$ of indeterminacy of $Y\ratl M_v$. Note that $Y$ is smooth and symplectic. Moreover, if $M'_v\to M_v$ is a resolution of singularities, we have 
\begin{equation}
h^1(\cO_{M_v})=h^1(\cO_{M'_v})=h^0(\Omega^1_{M'_v})\leq h^0(\Omega^1_Y)=0
\end{equation}
and the conclusion follows.

In the abelian case the proof is identical, once one uses \cite{MYY13} instead of \cite{BM14projectivity}. 
 \end{proof} 

\begin{lemma}\label{lemma galois closure}
Let $h:Z \to X$ be a surjective generically finite morphism from a smooth projective variety to a normal projective variety. If $\sigma$ is a $k$-form on $X_\reg$ whose pullback along $h$ extends to a regular $k$-form on Z, then for any resolution $\pi:X'\to X$ the pullback $\pi^*\sigma$ extends to a regular $k$-form as well.
\end{lemma}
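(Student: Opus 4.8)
The plan is to reduce to a situation where the generically finite map is actually finite, and then to a Galois cover, so that one can average the extended form. First I would factor $h : Z \to X$ through its Stein factorization or, more concretely, use a Hironaka-type flattening/Galois closure: replace $Z$ by the normalization $\wt Z$ of $X$ in a finite Galois extension of $\C(X)$ containing $\C(Z)$, and then by a resolution of singularities of $\wt Z$; call the resulting smooth projective variety $W$, with maps $W \to Z \to X$ and $W \to X$ generically finite, the latter admitting an action of a finite group $G$ with $W/G$ birational to $X$. Since $\sigma_\reg$ pulls back to a regular $k$-form on $Z$ by hypothesis, it pulls back to a regular $k$-form on $W$ as well (pullback of a form regular on an open set, along a morphism to that open set composed with a further morphism, stays regular after composing with a morphism of smooth varieties whose image is not contained in the bad locus — more carefully, the pullback to $W$ agrees on a dense open with the pullback of the regular form on $Z$, hence extends). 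So without loss of generality we may assume $h : Z \to X$ is such that there is a finite group $G$ acting on $Z$ with $\C(Z)^G = \C(X)$, i.e. $Z \to X$ is generically Galois.

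Next I would handle the finiteness issue on the resolution side. Let $\pi : X' \to X$ be the given resolution. Form a common resolution: take a smooth projective variety $Z'$ with proper birational morphisms $Z' \to Z$ and $Z' \to X'$ making the square commute over $X$ (e.g. resolve the main component of $Z \times_X X'$). The pullback of $\sigma_\reg$ to $Z'$ is regular, being the pullback of the regular $k$-form on $Z$ along $Z' \to Z$. Now the key point: a regular $k$-form on a smooth projective variety is a birational invariant, so to show $\pi^*\sigma$ extends regularly to $X'$ it suffices to show it extends regularly to \emph{some} smooth projective model birational to $X$; and I will produce such a model as a resolution of a quotient. Concretely, since $Z \to X$ is generically Galois with group $G$, the form $\frac{1}{|G|}\sum_{g \in G} g^*(h^{[*]}\sigma)$ on $Z$ is $G$-invariant and regular, and its restriction to the dense open where $Z \to X$ is étale descends to $\sigma_\reg$ on the corresponding open of $X$; hence on a resolution $Y'$ of the quotient $Z/G$ the form $\sigma_\reg$ extends regularly (invariant regular forms on $Z$ descend to regular reflexive forms on $Z/G$, which has quotient hence rational — in fact klt — singularities, so by \cite{GKKP11} they come from a regular form on the resolution $Y'$). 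Since $Y'$ and $X'$ are both smooth projective and birational, the regular $k$-form on $Y'$ corresponds to one on $X'$ extending $\pi^*\sigma$, which is what we want.

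The step I expect to be the main obstacle is the descent/averaging argument: making precise that a $G$-invariant regular $k$-form on the smooth $Z$ induces a \emph{reflexive} regular $k$-form on the singular quotient $Z/G$ whose restriction to the étale locus is exactly $\sigma_\reg$, and then invoking the extension theorem for reflexive differentials on klt (here quotient) singularities of \cite{GKKP11} to get a genuine regular form on a resolution. One has to be careful that the birational identification between the relevant open subsets of $X$ and $Z/G$ matches $\sigma_\reg$ with the descended form, and that "resolution of $Z/G$" and "$X'$" are connected by the birational invariance of regular forms on smooth projective varieties — this is standard but needs the projectivity (or at least properness/Kählerness) hypothesis, which is why it is stated. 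Everything else — Galois closure, common resolutions, the fact that pullback of a regular form along a dominant morphism of smooth varieties is regular — is routine.
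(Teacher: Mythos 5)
Your proposal is correct and follows the same skeleton as the paper's proof: pass to the Galois closure of the Stein factorization, observe that the pulled-back form is regular and $G$-invariant (your explicit averaging is harmless but redundant, since the form is generically a pullback from $X$ and hence already invariant), descend it to the quotient as a reflexive form, and transport the result to $X'$. The genuine difference is in the last step. The paper builds the equivariant resolution $\wt Z$ so that it maps \emph{onto} $X'$ from the start (by resolving the graph of $\wt Y \ratl X'$ equivariantly); then $\wt Z/G \to X'$ is a proper birational morphism onto the smooth $X'$, and the reflexive form descends by the elementary fact that a $k$-form defined off a codimension-two subset of a smooth variety extends. You instead take an arbitrary resolution $Y'$ of the quotient, invoke the extension theorem of \cite{GKKP11} for reflexive differentials on klt (here quotient) singularities to lift the reflexive form to a regular form on $Y'$, and then use birational invariance of $H^0(\Omega^k)$ of smooth projective varieties to move it to $X'$. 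Both routes are valid; yours imports heavier machinery (GKKP, which the paper cites elsewhere anyway) where the paper's graph construction keeps the final descent elementary. One small point to make explicit: for the averaging and the quotient $W/G$ to make sense you need the resolution of the Galois closure to be chosen $G$-equivariantly (as the paper does); also note that the Galois closure lives over the Stein factorization $Y$ rather than over $Z$ itself, so $W\to Z$ is a priori only a rational map --- you correctly handle this by pulling back the regular form along a rational map between smooth projective varieties, which is the same device the paper uses.
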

\begin{proof}
Replacing $Z$ by a further birational modification, we may assume that $f$ lifts to $X'$. Now we consider a diagram
\[
\xymatrix{
\wt Z \ar@/^4mm/[rr]\ar[d] & Z \ar[r]\ar[d] &  X' \ar[d]^\pi \\
\wt Y \ar[r] & Y \ar[r]^f & X\\
}
\]
where $Z\to Y$ is the Stein factorization of $h$, the morphism $\wt Y \to Y$ is the Galois closure of $f$, and $\wt Z \to \wt Y$ is an equivariant resolution of singularities for the action of the Galois group $G$ on $\wt Y$ that also admits a generically finite morphism $\wt Z \to X$. Such a resolution be obtained as follows. Composition with $\pi^{-1}$ gives a rational map $\wt Y \ratl X'$. Then the closure of the graph of this map is a closed subvariety $\Gamma \subset \wt Y \times X'$ which has a $G$-action (trivial on the second factor, note that $\Gamma$ is $G$-stable as $\wt Y \to X$ is $G$-invariant) and a morphism to $X$. Then $\wt Z$ can be obtained from $\Gamma$ by taking an equivariant resolution. Now we claim:
\begin{enumerate}
	\item The pullback of $\sigma$ to $\wt Z$ extends as a regular $k$-form $\wt \sigma$ on $\wt Z$.
	\item This form descends to $X'$.
\end{enumerate}
For the first claim, it suffices to note one can always pull back forms along rational maps between smooth varieties. As $f^*\sigma$ extends to $Z$ by assumption, the claim follows. For the second claim we observe that $\wt Z \to X'$ is $G$-invariant and factors thus through the quotient by $G$. Being a pullback from $Y$, the form $\wt\sigma$ is invariant and thus descends back to $\wt Z/G$ (as a reflexive $k$-form). Finally, any reflexive $k$-form descends along the birational contraction $\wt Z/G \to X'$.
\end{proof}

%
\section{Deforming rational curves on primitive symplectic varieties}\label{section rational curves np}
%
The goal of the section is to extend some  results in the smooth case concerning the deformation theory of rational curves to primitive symplectic varieties.

The general framework will be the following. Given a compact K\"ahler variety $X$ we will denote by $\overline{{ M_0}}(X, \alpha_0)$ the Kontsevich moduli stack of genus zero stable maps into $X$ of class $\alpha_0\in H_2(X,\Z)$. If $f$ is a stable map, we denote by $[f]$ the corresponding point of the Kontsevich moduli stack. 
We refer to \cite{BehrendManin96, FultonPandharipande95, AbramovichVistoli02} for details and constructions in the projective case and to e.g. Jason Starr answer's in \cite{Overflow} for a discussion of the Kontsevich moduli stack of genus zero stable maps in the K\"ahler case. We will often consider the relative situation as follows. 
Let $\pi : \mathcal X\ra B$ be a proper morphism of complex varieties whose fibers are compact K\"ahler varieties of dimension $2n$, and let $\alpha$ be a global section of the local system $R^{4n-2}\pi_*\Z$. Suppose that $\alpha$ is fiberwise of Hodge type $(2n-1,2n-1)$. Consider the relative Kontsevich moduli stack $\overline{{\mathcal M_0}}(\mathcal X/B, \alpha)$ which parametrizes genus zero stable maps $f : C\ra X$  to fibers $X=\mathcal X_b$, $b\in B$, of $\pi$ such that $f_*[C]=\alpha_b$. 
The canonical morphism $\overline{{\mathcal M_0}}(\mathcal X/B, \alpha)\ra B$ is proper. 
\begin{definition}\label{definition uniruled}
We say that an irreducible subvariety $Z\subset X$ is {\it uniruled} if there exists an irreducible subvariety $T\subset \overline{{ M_0}}(X, \alpha_0)$ such that the evaluation morphism 
$$
 {\textrm ev}_T: \mathcal C_T\to Z \subset X
$$
restricted to the universal curve $\mathcal C_T\to T$ over $T$ is dominant. We refer to such a component $T$ as a \emph{ruling}. By {\it a general curve in the ruling of $Z$} we mean the morphism
$$
 {\textrm ev}_t : C_t\to Z
$$
for general $t\in T$.
\end{definition}

\begin{rmk}\label{rmk:uniruled-np}
Notice that a ruling does not have to be unique. Nevertheless, using the MRC-fibration and the symplectic form, it is easy to see that there is a unique covering family of \emph{irreducible} rational curves if the divisor is irreducible. Let us assume for simplicity that $D$ is projective. Then from \cite[Part I, Theorem~V.3.1]{MP97} we infer that there exists a proper modification $\nu:\wt D \to D$ from a smooth variety $\wt D$ and a proper morphism $p:\wt D \to B$ with rationally connected fibers whose very general fiber contains all rational curves it meets. The map $p$ is a  resolution of indeterminacy of the MRC-fibration; its existence in the nonprojective case is a direct consequence of \cite[Th\'eor\`eme~1.1]{Ca81}. As $\wt D$ is smooth, the general fiber of $p$ is irreducible. Since the pull back $\nu^*\sigma$ of the symplectic form has generically a one dimensional radical, $p$ has relative dimension one and the images of fibers of $p$ constitute the sought-for unique covering family. The expression {\it the general curve of the ruling of a uniruled divisor $D$} will then refer to the general curve in this unique covering  family of {irreducible} rational curves. 

Clearly, there can still be different rulings in the sense of Definition~\ref{definition uniruled}. However, if there is a birational contraction whose exceptional locus is the given irreducible uniruled divisor, there is a distinguished ruling determined by the family of general fibers of the contraction. Notice that by what we said before, the contraction is necessarily unique. Such contraction exists on a birational model for prime exceptional divisors, thanks to Theorem \ref{thm:exc-def}, item (1).
\end{rmk} 

The argument presented in  Remark \ref{rmk:uniruled-np} immediately yields the following. 
\begin{lem}\label{prop:voisin}
Let $X$  be primitive symplectic variety of dimension $2n$. Let 
$D\subset X$ be a uniruled divisor. Then through the general point of $D$ there are finitely many rational curves. 
\end{lem}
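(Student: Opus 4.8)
The statement is an immediate consequence of the analysis carried out in Remark~\ref{rmk:uniruled-np}, so the plan is simply to make that argument precise. First I would reduce to the situation where $D$ is irreducible: a uniruled divisor is a finite union of irreducible components, each of which is uniruled (a ruling of $D$ must dominate at least one component, and that component inherits a ruling), and a general point of $D$ lies on exactly one component, so it suffices to bound the number of rational curves through a general point of an irreducible uniruled divisor $D_0$. From now on write $D$ for such an irreducible uniruled divisor.

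Next I would invoke the MRC-fibration as recalled in Remark~\ref{rmk:uniruled-np}: there is a proper modification $\nu:\wt D \to D$ from a smooth variety together with a proper morphism $p:\wt D \to B$ whose very general fiber is rationally connected and contains every rational curve it meets; in the non-projective case the existence of such a model follows from \cite[Th\'eor\`eme~1.1]{Ca81} (and from \cite[Part~I, Theorem~V.3.1]{MP97} in the projective case). The key input from the symplectic structure is that the pullback $\nu^*\sigma$ of the holomorphic symplectic form has generically a radical of dimension exactly one: it cannot be zero since $\sigma$ is non-degenerate on $X_{\reg}$ and $D$ is a divisor, and it cannot be larger because on a uniruled variety the symplectic form degenerates along the rationally connected fibration by the standard Bogomolov--type argument (a holomorphic $2$-form pulls back to zero on a rationally connected variety). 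Hence the general fiber of $p$ has dimension one, i.e.\ $p$ has relative dimension one, and since $\wt D$ is smooth the general fiber of $p$ is an irreducible rational curve.

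Now I would conclude as follows. Let $x$ be a general point of $D$; we may assume $x$ lies outside the (proper) locus where $\nu$ is not an isomorphism and outside the countable union of the images of the finitely-or-countably-many non-dominant subfamilies, so that the unique point $\wt x \in \nu^{-1}(x)$ lies in a very general fiber $F$ of $p$. Any rational curve $R$ through $x$ in $D$ lifts (after normalizing) to a rational curve $\wt R$ through $\wt x$ in $\wt D$; since $F$ is very general it contains every rational curve it meets, so $\wt R \subset F$, and as $F$ is an irreducible rational curve this forces $\wt R = F$. Therefore there is exactly one such curve, namely $\nu(F)$. More generally, if one does not assume $x$ is quite this general, one still gets finiteness: the curves through $x$ correspond to fibers of $p$ meeting $\wt x$, and since distinct fibers of $p$ meet in codimension $\geq 1$ on the surface swept out, a standard dimension count (or the properness of $\overline{M_0}(X,\alpha_0)\to B$ combined with the above) shows that only finitely many components of $\overline{M_0}(X,\alpha_0)$ whose curves cover $D$ can pass through a general point. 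The only subtlety — and hence the point I would be most careful about — is the verification that $\nu^*\sigma$ has one-dimensional radical, i.e.\ ruling out the degenerate case where $D$ itself is rationally connected or the symplectic form degenerates more; this is where the hypothesis that $X$ is primitive symplectic of dimension $2n$ (so that $\sigma$ is genuinely non-degenerate on $X_{\reg}$, which has complement of codimension $\geq 2$) is used.
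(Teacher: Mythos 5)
Your proof is correct and is essentially the paper's own argument: the paper proves this lemma by simply invoking Remark~\ref{rmk:uniruled-np} (MRC-fibration of a resolution of the irreducible divisor, one-dimensional radical of the pulled-back symplectic form, hence relative dimension one, hence a unique irreducible curve through a very general point), which is exactly what you spell out. The only slight imprecision is in your justification that the radical is exactly one-dimensional: that the radical is \emph{at most} one-dimensional is linear algebra for a hyperplane in a symplectic vector space (not a consequence of uniruledness), while the rational connectedness of the fibers is what shows the fiber directions lie \emph{inside} the radical and hence bounds the fiber dimension by $1$ — but this does not affect the validity of the argument.
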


If $X$ is {\it smooth}, it is well known (see e.g. \cite{BHT11}) that 
\begin{equation}\label{eq:dim}
\dim_{[f]} \overline{{ M_0}}(X, f_*[C]) \geq \dim(X) +\deg f^*T_X - 3.
\end{equation}

We place ourselves in the following

\begin{setting}\label{setting rational curves} 
We assume that $X$ is a primitive symplectic variety of dimension $2n$, $C$ is a connected union of normally crossing smooth rational curves, and $f : C \to X$ is a morphism which is generically injective on each irreducible component of $C$. We suppose furthermore that no intersection between two irreducible components of $C$ is sent to the singular locus of $X$. If needed, we will denote by $R$ the image of $f$ in $X$. 
\end{setting}

It is well known that if $X$ is a smooth IHS of dimension $2n$, the lower bound (\ref{eq:dim}) can be improved by one, namely $\overline{{ M_0}}(X, f_*[C])$ has dimension at least $2n-2$ and moreover $R$ deforms along its Hodge locus in $\Def(X)$ whenever the equality holds (cf. \cite{Ran95,CMP19, AV15}).

Let ${\mathcal X}\rightarrow \Def^\lt(X)$ be a local universal family of locally trivial deformations of $X$.
Recall that  $\Def^\lt(X)$ is smooth by \cite[Theorem 4.11]{BL18}. 
 Let $B$ be the Hodge locus associated to $[R]$ in $\Def^\lt(X)$. In particular, by the local Torelli theorem \cite[Corollary 5.8]{BL18}, $B$ is a smooth divisor in $\Def^\lt(X)$. By abuse of notation we will denote by $[R]$ the corresponding  global section of Hodge type $(2n-1,2n-1)$ of the local system $R^{4n-2}\pi_*\Z$.
 
Recall from \cite{GKK10}, that there always exist strong log resolutions $\pi:Y\to X$ such that $\pi_*T_Y=T_X$ for any reduced complex space $Y$.

\begin{prop}\label{prop:defcurves}
Suppose we are in Setting~\ref{setting rational curves}, consider a local universal family ${\mathcal X}\rightarrow S\subset \Def^\lt(X)$ of locally trivial deformations of $X$, and let $\Pi:\sY \to \sX$ be a simultaneous resolution of singularities that is an isomorphism on the smooth locus of $\sX\to S$. We denote by $\pi:Y\to X$ the central fiber of $\Pi$. Suppose that $R$ is not contained in the singular locus of $X$. Let $g:C\to Y$ a lift of $f$ and $M$ be an irreducible component of $\overline{{ M_0}}(\mathcal X, f_*[C])$ containing $[f]$. Then the following holds:
\begin{enumerate}
\item\label{item dimension prop:defcurves} $M$ has dimension at least $2n-2- \deg (g^*K_Y)$;
\item\label{item noether lefschetz prop:defcurves} If $M$ has dimension $2n-2- \deg (g^*K_Y)$, then (possibly after shrinking the representative of $\Def^\lt(X)$) any irreducible component of $\overline{{\mathcal M_0}}(\mathcal X/B, [R])$
 containing $M$ dominates the Hodge locus $B$ where the class $[R]$ remains algebraic.
\end{enumerate}
\end{prop}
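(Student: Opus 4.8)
The plan is to reduce everything to the smooth variety $Y$, where the classical deformation theory of genus zero stable maps applies, and then transfer the conclusions back to $\sX$. First I would observe that since $R$ is not contained in $X_{\sing}$ and (by Setting~\ref{setting rational curves}) the nodes of $C$ do not map into $X_{\sing}$, the lift $g:C\to Y$ is an honest stable map whose image meets the $\pi$-exceptional locus in only finitely many points, and $\Pi$ induces a morphism of relative Kontsevich stacks $\overline{\mathcal M_0}(\sY/S,g_*[C])\to \overline{\mathcal M_0}(\sX/S,f_*[C])$ which is an isomorphism near $[g]$ (being an isomorphism over the smooth locus, which contains the generic point of each component of $R$). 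Actually care is needed here: $\Pi\circ g$ could a priori fail to be stable if a component of $C$ gets contracted, but since $\pi$ is an isomorphism on $X_\reg$ and $R\not\subset X_\sing$, no component is contracted and no instability is created; I would spell this out using that $\pi_*T_Y = T_X$ from \cite{GKK10}. Thus it suffices to prove the two assertions with $\sX$ replaced by $\sY$ and $f$ by $g$.

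For item~\eqref{item dimension prop:defcurves}: on the smooth variety $Y$ the bound \eqref{eq:dim} gives $\dim_{[g]}\overline{M_0}(Y,g_*[C])\ge \dim Y + \deg g^*T_Y - 3 = 2n + (-\deg g^*K_Y) - 3$. The gain of one, yielding $2n-2-\deg g^*K_Y$, comes exactly as in the smooth IHS case (Ran, Amerik--Verbitsky, \cite{CMP19,AV15,Ran95}): the holomorphic symplectic form $\sigma$ on $X$ pulls back to a holomorphic $2$-form $\pi^*\sigma$ on $Y$ (this is precisely the defining property of symplectic singularities, Definition~\ref{def:symp-sing}), and contracting with $\pi^*\sigma$ shows that the obstruction space $H^1(C,g^*T_Y)$ — or rather the relevant piece of it — is forced to have positive dimension, equivalently the expected dimension can be raised by one. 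I would invoke the relative version of this argument for $\sY\to S$, noting that $\pi^*\sigma$ spreads out to a relative $2$-form on $\sY/S$ because the symplectic form deforms (locally trivial deformations preserve the symplectic structure, cf. Lemma~\ref{lemma irreducible symplectic locally trivial} and \cite[Corollary~2.24]{BGL20}).

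For item~\eqref{item noether lefschetz prop:defcurves}: let $\mathcal M$ be a component of $\overline{\mathcal M_0}(\sY/S,g_*[C])$ containing (the image of) $M$, and let $\mathcal M \to S$ be the structure map. By the relative dimension estimate just proved applied fiberwise, every fiber of $\mathcal M\to S$ over a point of the image has dimension $\ge 2n-2-\deg g^*K_Y$; combined with $\dim M = 2n-2-\deg g^*K_Y$ and the fact that $M$ sits inside the central fiber $\overline{M_0}(Y,g_*[C])$, a dimension count shows that $\mathcal M$ has dimension exactly $\dim M + \dim B$ near $[g]$, so $\mathcal M\to B$ is dominant. The point that the image lands in the Hodge locus $B$ (rather than a larger subset of $S$) is automatic: the class $g_*[C]$ must remain of type $(2n-1,2n-1)$ on any fiber containing a deformation of the curve, hence the image of $\mathcal M$ in $S$ is contained in the Hodge locus of $[R]$, which is the smooth divisor $B$ by local Torelli \cite[Corollary~5.8]{BL18}; combined with dominance onto something of dimension $\dim B$ this forces surjectivity onto $B$ (after shrinking).

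\textbf{Main obstacle.} The delicate point is the first step — transferring the problem from the singular family $\sX$ to its simultaneous resolution $\sY$ — and in particular checking that passing to $Y$ does not lose or gain components of the Kontsevich space near $[f]$, that $g$ is stable, and that the relative symplectic $2$-form on $\sY/S$ genuinely produces the ``$+1$'' improvement in the presence of the exceptional locus (a priori $\pi^*\sigma$ degenerates along $\Exz(\pi)$, so one must know the general deformation of $g$ still meets $\Exz(\pi)$ in a controlled way). This is where the hypotheses of Setting~\ref{setting rational curves} and the property $\pi_*T_Y=T_X$ are essential, and it is the part of the argument that has no counterpart in the smooth case.
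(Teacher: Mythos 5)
Your reduction to the smooth family $\sY\to S$ and your treatment of item (2) (the bound $\dim\mathcal M\geq\dim B+2n-2-\deg g^*K_Y$ for a relative component, combined with semicontinuity of fiber dimension and the observation that the image in $S$ is forced into the Hodge locus) are essentially the paper's argument. The gap is in item (1), where you propose to obtain the improvement from $2n-3-\deg g^*K_Y$ to $2n-2-\deg g^*K_Y$ by ``contracting with $\pi^*\sigma$'' so that ``the obstruction space has positive dimension, equivalently the expected dimension can be raised by one.'' This is not a valid implication: a larger $H^1(C,g^*T_Y)$ makes the naive bound \emph{worse}, not better; what a semiregularity-type argument must actually show is that the obstructions land in a proper subspace of the obstruction space. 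Moreover, even the correct form of that argument requires the contraction $g^*T_Y\to\omega_C$ induced by the $2$-form to be nonzero along the curve, which is exactly what may fail on the resolution $Y$, where $\pi^*\sigma$ degenerates along $\Exz(\pi)$ --- you flag this yourself as the ``main obstacle'' but leave it unresolved, so your proof of (1) is incomplete as it stands.

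The paper obtains the extra $+1$ without ever invoking the symplectic form. Choose a smooth curve $T\subset S$ through $0$ that is \emph{not} contained in the Hodge locus $B$ of $[R]$ (recall $B$ is a divisor in $S$), and apply the bound \eqref{eq:dim} to the smooth $(2n+1)$-dimensional total space $\mathcal Y_T$ of the restricted family: any component of the relative Kontsevich space over $T$ through $[g]$ has dimension at least $\dim\mathcal Y_T-3-\deg g^*K_{\mathcal Y_T}=2n-2-\deg g^*K_Y$ (the canonical degrees agree because $Y$ is a fiber of $\mathcal Y_T\to T$). Since $T$ is transverse to $B$, the class $[R]$ is not Hodge on $X_t$ for $0\neq t\in T$ near $0$, so no deformation of $f$ can leave the central fiber and the entire component is contained in the fiber over $0$; this gives item (1) directly. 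This is the same mechanism as in \cite{Ran95,AV15,CMP19}, but it is a statement about the base direction being ``used up,'' not about the size of the obstruction space, and it needs no control of $\pi^*\sigma$ along the exceptional locus.
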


Note that such a lift $g$ exists and is unique as we are in Setting \ref{setting rational curves}. Moreover, the term $\deg (g^*K_Y)$ is always non-negative as $X$ has canonical singularities, and a simultaneous resolution as in the statement of the proposition always exists by \cite[Corollary~2.23]{BGL20}. In fact, it can be obtained as a deformation of any given resolution $\pi:Y\to X$ satisfying $\pi_*T_Y=T_X$.

\begin{proof}
To prove (1), observe that a generic deformation of $f$ can be lifted to $Y$, i.e. there exists an irreducible component of $\overline{{ M_0}}( Y, g_*[C])$ containing $[g]$ together with a birational map $M'\ratl M$.  Let $T\subset S$ be a smooth curve passing through the point $0\in S$ corresponding to $X$ and not contained in the Hodge locus $B:=\Hdg_{[R]}(X) \subset S$ of $[R]$. Recall from \cite[Lemma~4.13]{BL18} that $B$ is a divisor in $S$. Consider the restrictions to $T$
\begin{equation*}
\xymatrix{
\mathcal Y_T\ar[r]\ar[dr]& \mathcal X_T\ar[d]\\
& T
}
\end{equation*}
 of $\Pi$. 

Let $\mathcal M_T\to T$ be an irreducible component of $\overline{{\mathcal M_0}}(\mathcal X_T, [R])$ that contains $M$. Lifting rational curves from $\mathcal X_T \to T$ to $\mathcal Y_T \to T$, we determine an irreducible component $\mathcal M'_T\to T$ of $\overline{{\mathcal M_0}}(\mathcal X_T, [R])$ which dominates $\mathcal M_T$ generically finitely.

By (\ref{eq:dim}) we have 
\begin{eqnarray*}
\dim_{[f]}  \mathcal M_T = \dim_{[g]}  \mathcal M'_T &\geq& -\deg g^*K_{\mathcal Y_T} + \dim {\mathcal Y_T} - 3\\
&=& -\deg g^*K_Y + 2n - 2.
\end{eqnarray*}
Now by the choice of $T$ outside the 
Hodge locus of $[R]$, for $0\neq t\in T$ close to $0$, no deformation of $f$ parametrized by $\mathcal M_T$ can be contained in $X_t$ and item (1) follows.

To show (2), consider as above an irreducible component $\mathcal M\to S$ of $\overline{{\mathcal M_0}}(\mathcal X, f_*[C])$ containing $M$ and  again let $\mathcal M'\to S$ be the unique irreducible component of $\overline{{\mathcal M_0}}(\mathcal Y, g_*[C])$ obtained by lifting rational curves to the desingularizations. 

By (\ref{eq:dim}) we have 
\begin{equation*}
\dim_{[g]} \mathcal M' \geq  -\deg g^*K_{\mathcal Y} + \dim {\mathcal Y} - 3\geq \dim S + 2n - 3 - \deg (g^*K_Y),
\end{equation*}
from which we deduce 
$$
\dim_{[f]} \mathcal M\geq \dim S + 2n - 3 - \deg (g^*K_Y)= \dim B + 2n - 2 - \deg (g^*K_Y).
$$
Since the image in $S$ of $\mathcal M$ is necessarily contained in the divisor $B$ given by the Hodge locus  and the fibers of such a component all have dimension at least $2n-2- \deg (g^*K_Y)$ by item (1), if a fiber has dimension $2n - 2- \deg (g^*K_Y)$, it follows from semi-continuity of the fiber dimension that $\mathcal M$ has to dominate B, which shows the result.
\end{proof}

In practice, Proposition \ref{prop:defcurves} can be useful to deform families of rational curves covering a divisor only when the general member of the family does not intersect the singular locus as the following elementary dimension count shows. 

\begin{lem}\label{lem:lower}
Let $X$  be a  primitive symplectic variety of dimension $2n$. Let $f:C\to X$  a genus zero stable map whose deformations in $X$ cover an irreducible divisor $D$. Then $f$ deforms in a family of  dimension at least $2n-2$. 
\end{lem}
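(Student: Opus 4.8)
The statement asserts a lower bound on the dimension of a covering family of genus zero curves that sweep out a divisor $D \subset X$. My plan is to reduce this to the dimension estimate of Proposition~\ref{prop:defcurves}, so the whole game is to show that the correction term $\deg(g^*K_Y)$ vanishes for the \emph{general} member of the covering family. First I would fix an irreducible component $T \subset \overline{M_0}(X, f_*[C])$ whose associated evaluation morphism dominates the divisor $D$; by replacing $f$ with a general member of this family we may assume $[f]$ lies in such a general position. I would then want to reduce to Setting~\ref{setting rational curves}: after passing to the normalization of the image curve we may assume $f$ is generically injective on each component, and since through a general point of $D$ the curve varies, a general such curve meets neither the singular locus of $X$ (which has codimension $\geq 2$ in $X$, hence the locus it sweeps out in $D$ is a proper closed subset) nor, in particular, do its nodes map into $X_{\sing}$.

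Next, the key point: let $\pi : Y \to X$ be a strong log resolution with $\pi_*T_Y = T_X$ and $g : C \to Y$ the lift of $f$. Since $X$ has canonical (in fact symplectic, hence rational Gorenstein) singularities, we can write $K_Y = \pi^*K_X + E$ with $E$ an effective $\pi$-exceptional divisor; as $K_X \equiv 0$ (the symplectic form trivializes the canonical sheaf on the regular locus, and reflexivity extends this), $\deg(g^*K_Y) = \deg(g^*E) = (g_*[C] \cdot E)$. Now if the general curve in the ruling of $D$ avoids $X_{\sing}$, then its lift $g(C)$ avoids $\supp(E) \subset \pi^{-1}(X_{\sing})$, so $g^*E = 0$ and $\deg(g^*K_Y) = 0$. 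Plugging this into Proposition~\ref{prop:defcurves}(1) gives $\dim_{[f]} M \geq 2n - 2$ for the component $M$ of $\overline{M_0}(\mathcal X, f_*[C])$ containing $[f]$; since $M$ contains the original covering family $T$ (or a component dominating it), we conclude $\dim T \geq 2n-2$, which is the assertion.

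\textbf{Where the difficulty lies.} The one genuinely nontrivial input is the claim that the general curve of a covering family of a divisor avoids $X_{\sing}$. Naively one argues: $X_{\sing}$ has codimension $\geq 2$ in $X$, hence codimension $\geq 1$ in $D$, so the curves through a general point of $D$ — which vary in a family sweeping out all of $D$ — generically avoid it. But this requires knowing that the curves move enough transversally to $X_{\sing}$, i.e. that the sublocus of $D$ covered by those curves \emph{entirely contained in} $X_{\sing}$, together with the points where a moving curve crosses $X_{\sing}$, is not all of $D$. For the points where a curve crosses $X_{\sing}$ this is immediate from the divisor being irreducible of dimension $2n-1$ while $X_{\sing}$ has dimension $\leq 2n-2$, provided not every curve of the family lies in $X_{\sing}$; and a curve contained in $X_{\sing}$ cannot have its deformations cover the $(2n-1)$-dimensional $D$ since $\dim X_{\sing} \leq 2n-2$. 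So the only real subtlety is bookkeeping: a general member $f' : C' \to X$ of the ruling has image meeting $X_{\sing}$ in at most finitely many points, none of which can be a node of $C'$ since $X_{\sing}$ has codimension $\geq 2$ and we can avoid the finitely many nodes by genericity — this is exactly the content of Setting~\ref{setting rational curves} and what is needed to feed $g$ into Proposition~\ref{prop:defcurves}. I would present this dimension count as the heart of the lemma and then simply invoke Proposition~\ref{prop:defcurves}(1) with $\deg(g^*K_Y) = 0$.
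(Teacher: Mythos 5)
Your proposal misses the point of this lemma and, in the process, relies on a step that is false in general. The paper's proof is a one-line count that uses nothing about deformation theory: if $M$ is a component of $\overline{M_0}(X,f_*[C])$ containing $[f]$ and $\dim M = 2n-2-e$ with $e>0$, then the universal curve $\sC\to M$ has dimension $2n-1-e < 2n-1 = \dim D$, so the evaluation map $\sC\to D$ cannot be dominant, contradicting the hypothesis that the deformations of $f$ cover $D$. That is the entire argument; the lower bound is forced by the covering hypothesis alone, with no input from Proposition~\ref{prop:defcurves}, no resolution of singularities, and no discussion of $\deg(g^*K_Y)$.

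The gap in your route is the claim that the general curve of a family covering $D$ avoids $X_{\sing}$. For a primitive symplectic variety $X_{\sing}$ can have codimension exactly $2$ in $X$, hence codimension $1$ in $D$, and a one-dimensional member of a family sweeping out the $(2n-1)$-dimensional $D$ will in general meet a $(2n-2)$-dimensional subvariety of $D$ in finitely many points --- it does not avoid it. (Your parenthetical ``the locus it sweeps out in $D$ is a proper closed subset'' conflates ``proper closed'' with ``avoidable by a moving curve''.) This is precisely why the paper proves Theorem~\ref{theorem:term} and Corollary~\ref{corollary term}: avoidance of $X_{\sing}$ by the general ruling curve holds only under the \emph{terminality} hypothesis $\codim X_{\sing}\geq 4$, and in the general case the proof of Theorem~\ref{thm:uniruleddivisor} must instead use Proposition~\ref{prop:kaledin} together with the crepancy of the functorial resolution over the transversal ADE locus to kill the term $\deg(g^*K_Y)$. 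Your argument also inverts the logical order of the paper: Lemma~\ref{lem:lower} is needed (together with Lemma~\ref{prop:voisin}) to pin down the dimension as exactly $2n-2$ \emph{before} one knows anything about discrepancies, whereas Proposition~\ref{prop:defcurves}(1) by itself only gives the a priori weaker bound $2n-2-\deg(g^*K_Y)$.
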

\begin{proof}
Let $M$ be an irreducible component of $\overline{{ M_0}}( X, f_*[C])$ containing $[f]$ and let $\mathcal C\to M$ be the (domain of the) universal family restricted to $M$.  If by contradiction $\dim(M)=2n-2-e$ for some $e>0$, then $\mathcal C$ would have dimension $2n-1-e$ and  the evaluation morphism $\ev :\mathcal C\to D$ cannot be dominant.
\end{proof}

\begin{prop}\label{prop:expdim}
Let $X$  be a  primitive symplectic variety of dimension $2n$. Let $D\subset X$  be an irreducible uniruled divisor and $f:C\to D$  a general curve in the ruling (cf. Remark \ref{rmk:uniruled-np}). 
Then $f$ deforms in a family of  dimension $2n-2$.
\end{prop}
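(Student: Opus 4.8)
The plan is to combine the lower bound of Lemma~\ref{lem:lower} with a matching upper bound on $\dim M$, where $M$ is the component of $\overline{M_0}(X,f_*[C])$ through $[f]$ that rules $D$. Since $D$ is irreducible and $f$ is a general curve in \emph{the} ruling of Remark~\ref{rmk:uniruled-np}, the curves parametrized by the relevant sub-family are irreducible rational curves; after replacing $f$ by a general member we are in Setting~\ref{setting rational curves} (the general such curve is a smooth rational curve meeting $X_{\sing}$, if at all, only at finitely many interior points, and in fact we want it disjoint from $X_{\sing}$). The point to establish first is exactly that: by Lemma~\ref{prop:voisin} there are finitely many rational curves through a general point of $D$, and since $\codim_X X_{\sing}\geq 2$, a general point of $D$ lies off $X_{\sing}$; one then argues that a general curve in the ruling does not meet $X_{\sing}$ at all. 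This is where the local structure of symplectic singularities enters, as advertised in the introduction (cf. Corollary~\ref{corollary term} referenced there): on the terminal locus $U=X\setminus\Sigma$ of Proposition~\ref{prop:kaledin}, $X$ is \'etale-locally $(\C^{2n-2},0)\times(S,p)$ with $(S,p)$ a surface RDP, and a general ruling curve cannot be forced into the (codimension $\geq 2$) RDP locus.

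Granting that the general ruling curve $f:C\to X$ avoids $X_{\sing}$, I would apply Proposition~\ref{prop:defcurves}. Choose a simultaneous resolution $\Pi:\sY\to\sX$ as in that proposition and let $g:C\to Y$ be the lift of $f$; since $f(C)$ is disjoint from $X_{\sing}$ and $\pi:Y\to X$ is an isomorphism over $X_{\reg}$, we have $g(C)\subset Y\setminus \Exz(\pi)$, hence $g^*K_Y=g^*\pi^*K_X$. But $K_X$ is trivial (it is carried by the symplectic form on a primitive symplectic variety), so $\deg(g^*K_Y)=0$. Proposition~\ref{prop:defcurves}(1) then gives $\dim_{[f]} M\geq 2n-2$. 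Combined with the reverse inequality this would only re-prove Lemma~\ref{lem:lower}, so the real content is the \emph{upper} bound $\dim_{[f]} M\leq 2n-2$, which I would get from the fact that the curves in $M$ sweep out exactly a divisor $D$: the universal curve $\mathcal C_M\to M$ has dimension $\dim M+1$ and dominates $D$ via the evaluation map, while through a general point of $D$ there pass only finitely many curves of the family by Lemma~\ref{prop:voisin} (equivalently, the general fiber of $\ev:\mathcal C_M\to D$ is finite). Hence $\dim M+1=\dim\mathcal C_M\geq \dim D=2n-1$ would be the wrong direction; instead finiteness of the general fiber of $\ev$ gives $\dim\mathcal C_M=\dim D=2n-1$, so $\dim M=2n-2$.

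Let me restate the upper-bound step cleanly, since it is the crux. Let $\mathcal C_M\to M$ be the restriction to $M$ of the universal genus-zero curve and $\ev:\mathcal C_M\to X$ the evaluation. By hypothesis $\overline{\ev(\mathcal C_M)}=D$, so $\dim\mathcal C_M\geq 2n-1$. On the other hand, a general point $x\in D$ lies in $X_{\reg}$ (as $\codim X_{\sing}\geq 2$) and by Lemma~\ref{prop:voisin} only finitely many rational curves of $X$ pass through $x$; a fortiori the fiber $\ev^{-1}(x)$, which maps to such curves with finite fibers (a stable map of genus zero has finite automorphisms once a marked point is fixed, and distinct points of $\ev^{-1}(x)$ over the same unparametrized curve differ by reparametrization), is finite. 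Therefore $\ev$ is generically finite onto $D$ and $\dim\mathcal C_M=2n-1$, whence $\dim M=2n-2$.

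The main obstacle, as I see it, is not the dimension bookkeeping but the reduction at the start: verifying that a \emph{general} curve in the ruling genuinely misses $X_{\sing}$, so that $\deg(g^*K_Y)=0$ and Proposition~\ref{prop:defcurves} applies with the sharp bound. Without this, the correction term $\deg(g^*K_Y)$ could be strictly positive and the equality $\dim M=2n-2$ would fail. I expect the clean way to handle it is to invoke the corollary (Corollary~\ref{corollary term}) mentioned in the introduction, reducing to the terminal locus via Proposition~\ref{prop:kaledin} and the product structure $(\C^{2n-2},0)\times(S,p)$ there: a covering family of rational curves on a divisor, if a general member hit the singular locus, would force infinitely many rational curves through a general (necessarily smooth) point of $D$ via the RDP resolution fibers, contradicting Lemma~\ref{prop:voisin}. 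Once that is in place, everything else is the short dimension count above.
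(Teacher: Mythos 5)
The core of your argument --- the lower bound from Lemma~\ref{lem:lower} together with the upper bound obtained from the generic finiteness of the evaluation map $\ev\colon\mathcal C_M\to D$, which follows from Lemma~\ref{prop:voisin} --- is exactly the paper's proof; the paper merely phrases the upper bound as a contradiction (if $\dim M\ge 2n-1$ then $\dim\mathcal C_M\ge 2n$, and a general point of $D$ would carry a positive-dimensional family of rational curves). Your ``clean restatement'' paragraph is by itself a complete and correct proof.

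What is off is your diagnosis of the ``main obstacle''. The reduction to ruling curves avoiding $X_{\sing}$ is not needed: Lemma~\ref{lem:lower} already gives the sharp lower bound $\dim M\ge 2n-2$ with no control on $\deg(g^*K_Y)$, so the claimed equality does not ``fail'' when the correction term is positive --- Proposition~\ref{prop:defcurves} simply plays no role in this proposition (it is used later, in Theorem~\ref{thm:uniruleddivisor}, where one needs smoothness of $\mathcal M$ and dominance of the Hodge locus). Moreover, the avoidance statement you want is not available here: $X$ is not assumed terminal, Corollary~\ref{corollary term} requires terminality, and Theorem~\ref{theorem:term} only excludes that all ruling curves meet a subvariety of codimension at least $3$, so in general the ruling curves may very well meet the codimension-two stratum of $X_{\sing}$. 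Your heuristic for it (infinitely many curves through a general point of $D$ ``via the RDP resolution fibers'') also does not work, since those fibers are curves on a resolution $Y$ and collapse to points on $X$. None of this damages the dimension count, but the first and last paragraphs of your proposal should be deleted.
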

\begin{proof}
Let $f:C\to D\subset X$ be a curve in the family passing through the general point of $D$. 
We argue by contradiction. By Lemma \ref{lem:lower} we may then suppose that there exists an irreducible component  $M$ of $\overline{{ M_0}}( X, f_*[C])$ containing $[f]$  and having dimension $\geq 2n-1$. 
Consider the universal curve $\mathcal C\to M$ and the evaluation morphism
$$
ev: \mathcal C\to D\subset X.
$$
Since $\dim \mathcal C\geq 2n$, through the general point of $D$  we would have a family $\mathcal C_x\to M_x$ of rational curves with $\dim (M_x)\geq 1$. This would contradict Lemma \ref{prop:voisin} and we are done.
\end{proof}

The arguments in Remark~\ref{rmk:uniruled-np} in particular implied that only finitely many curves pass through a general point in a uniruled divisor. The following strengthening of that statement will be crucial in the proof of Theorem~\ref{thm:uniruleddivisor} and also yields Corollary~\ref{corollary term} below.

\begin{thm}\label{theorem:term}
Let $X$ be a compact K\"ahler variety with rational singularities, let $D\subset X$  be an irreducible uniruled divisor, and let $\sigma$ be a reflexive $2$-form on $X$ which is symplectic at the general point of $D$. If $\Sigma \subset X$ is a closed subvariety such that every curve in the ruling meets $\Sigma$, then $\codim_X\Sigma \leq 2$. 
\end{thm}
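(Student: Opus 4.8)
The plan is to argue by contradiction: suppose $\codim_X\Sigma \geq 3$ yet every curve in the ruling meets $\Sigma$. The key idea is that the symplectic form $\sigma$ forces the ruling of $D$ to be, generically over $D$, a foliation by rational curves whose leaves have a very rigid normal behaviour — in particular the curves sweep out $D$ with only a one-dimensional parameter of curves through a general point (indeed finitely many, by Remark~\ref{rmk:uniruled-np}), and the tangent direction of the ruling lies in the radical of $\sigma|_D$. First I would pass to the smooth locus: let $U := X_{\reg}$, $D^\circ := D \cap U$, and choose a resolution of indeterminacy $p : \wt D \to B$ of the MRC-fibration as in Remark~\ref{rmk:uniruled-np}, so that the general fiber of $p$ is an irreducible rational curve and $\nu^*\sigma$ has a one-dimensional radical along general fibers. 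Since the ruling is unique, it suffices to work with this family. The hypothesis says the image of the general fiber meets $\Sigma$.

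\textbf{The dimension count.} The heart of the matter is a parameter count. Let $T$ denote the (irreducible) base of the family of general fibers of $p$, so $\dim T = \dim D - 1 = 2n-2$, and let $\mathcal C_T \to T$ with evaluation $\ev : \mathcal C_T \to D$; here $\dim \mathcal C_T = 2n - 1$ and $\ev$ is dominant with general fiber finite (Lemma~\ref{prop:voisin}). Consider the incidence subvariety $\mathcal C_\Sigma := \ev^{-1}(\Sigma \cap D) \subset \mathcal C_T$. By assumption the projection $\mathcal C_\Sigma \to T$ is \emph{surjective}, so $\dim \mathcal C_\Sigma \geq \dim T = 2n - 2$. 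On the other hand $\mathcal C_\Sigma$ maps to $\Sigma \cap D$ via $\ev$; since $\Sigma$ has codimension $\geq 3$ in $X$ and $D$ is a divisor, $\Sigma \cap D$ has dimension $\leq 2n - 3$ (when nonempty it is proper in $\Sigma$ or at worst equal to $\Sigma$, which has dimension $\leq 2n-3$). Hence the general fiber of $\ev|_{\mathcal C_\Sigma} : \mathcal C_\Sigma \to \Sigma\cap D$ has dimension $\geq (2n-2) - (2n-3) = 1$. Thus through a general point $z$ of (a component of) $\Sigma \cap D$ there passes a \emph{positive-dimensional} family of curves of the ruling. The plan is then to derive a contradiction with the rigidity of the ruling: a positive-dimensional family of rational curves of the ruling through the \emph{same} point $z \in D$ would, after the standard bend-and-break / deformation argument, force these curves to sweep out a subvariety $Z \ni z$ with $\dim Z \geq 2$ all of whose curves pass through $z$; projecting away from $z$ (or using that $\sigma$ restricted to $Z$ degenerates only along the ruling direction) this contradicts either the one-dimensionality of the radical of $\sigma|_D$ or the finiteness statement of Lemma~\ref{prop:voisin} — more precisely, it contradicts the fact (Remark~\ref{rmk:uniruled-np}) that through the general point of $D$ there pass only finitely many curves of the ruling, since a point of $\Sigma \cap D$ of this type can be connected through the family to general points of $D$.

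\textbf{Making the last step rigorous.} The subtle point, and the one I expect to be the main obstacle, is that the above contradiction is phrased at a \emph{special} point $z \in \Sigma$, whereas the finiteness statement of Lemma~\ref{prop:voisin} and the radical computation are only known at the \emph{general} point of $D$. To bridge this, I would argue as follows. Let $\mathcal C_\Sigma^0$ be a component of $\mathcal C_\Sigma$ dominating $T$ (such exists since $\mathcal C_\Sigma \to T$ is surjective and $T$ is irreducible — pick a component surjecting onto $T$). Then the general curve $C_t$ parametrized by $T$ meets $\Sigma$; equivalently, the evaluation $\ev$ restricted over a general $t$ hits $\Sigma \cap D$. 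Now consider the universal curve over $T$ as a family of maps $C_t \to D \subset X$; composing with a projection or using that $\Sigma \cap D$ has dimension $\le 2n-3$ while $T$ has dimension $2n - 2$, the locus in $\mathcal C_T$ mapping to $\Sigma \cap D$ has a component dominating $T$ and with one-dimensional fibers over $\Sigma\cap D$ as computed above. Choose a general point $z$ in the image of that component in $\Sigma \cap D$; then $z$ lies on infinitely many curves of the ruling. But the set of points of $D$ lying on infinitely many curves of the (unique) ruling is a proper closed subset $B_\infty \subsetneq D$, and by genericity of $t$ a general curve $C_t$ is \emph{not} contained in $B_\infty$; yet $C_t$ meets $\Sigma \cap D \subset B_\infty$ only in the finite set $C_t \cap \Sigma$, so $C_t \cap B_\infty$ is finite — this is consistent and \emph{not} yet a contradiction. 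The genuine contradiction comes by a second count: the curves through a fixed $z \in \Sigma$ form a family of dimension $\geq 1$ inside $M := \overline{M_0}(X, f_*[C])$, which (since $D$ is uniruled with these curves and they all pass through $z$) sweep out a subvariety $D_z \subseteq D$ with $z \in D_z$ and $\dim D_z \geq 2$; the symplectic form $\sigma$ restricted to $D_z$ has rank $\leq 1$ (its radical contains the $(2n-2)$-dimensional tangent space of the fibers in all directions transverse to... ) — and here I would invoke the Mumford-type rigidity: a family of rational curves all through one point on a variety carrying a holomorphic form whose radical is one-dimensional along the ruling is impossible in dimension $\geq 2$, as in the proof in \cite{AV15}. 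I therefore anticipate that the cleanest route is: reduce to $X_{\reg}$, build the incidence variety, extract the one-dimensional fiber, and then cite verbatim the bend-and-break argument used in \cite{CMP19, AV15} to rule out a $2$-dimensional subvariety ruled by rational curves through a common point on a (reflexive-)symplectic variety. Closing this last gap carefully — in particular ensuring the subvariety $D_z$ is genuinely $2$-dimensional and not contracted — is where the real work lies.
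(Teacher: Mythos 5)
Your first half --- forming the incidence variety $e^{-1}(\Sigma)\subset\cC$, observing that a component of it dominates $T$ (hence has dimension $2n-2$) while mapping to $\Sigma\cap D$ of dimension $\le 2n-3$, so that the fibers over $\Sigma$ are positive-dimensional --- is exactly the paper's construction (the paper calls a resolution of such a component $T'$ and notes $T'\to\Sigma$ has relative dimension $\codim_D\Sigma-1\ge 1$). But the second half has a genuine gap, and you have correctly located it yourself: you try to derive the contradiction \emph{at the special point} $z\in\Sigma$, where neither the symplecticity of $\sigma$ nor the finiteness of Lemma~\ref{prop:voisin} is available, and your proposed repair (bend-and-break / Mumford-type rigidity as in the projective arguments of \cite{AV15,CMP19}) is both left unexecuted and unavailable in the compact K\"ahler setting --- indeed Remark~\ref{remark projective case} explains that the Mumford/Hacon--M\textsuperscript{c}Kernan route only works in the projective case, and avoiding it is the whole point of proving the theorem for K\"ahler $X$. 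It is also simply not true that a one-dimensional family of rational curves through a fixed point of a symplectic variety is impossible (think of planes of lines in Mukai flop centers), so the contradiction cannot be purely local at $z$.

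The missing idea is to move the contradiction back to the \emph{general} point of $D$ via a descent argument for the form itself. Let $\cC'$ be the unique component of $T'\times_T\cC$ dominating $\cC$, with $\P^1$-fibration $a:\cC'\to T'$, canonical section $s:T'\to\cC'$ landing over $\Sigma$, and generically finite surjection $e':\cC'\to D$. Since $a$ is generically a smooth $\P^1$-fibration, the (reflexive, by \cite[Theorem~14.1]{KS18}) pullback $\sigma_{\cC'}=(\iota\circ e')^{[*]}\sigma$ descends to a $2$-form $\eta=s^*\sigma_{\cC'}$ on $T'$; and because $e'\circ s$ factors through $\Sigma$, the form $\eta$ is itself a pullback along $b:T'\to\Sigma$. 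Hence $\sigma_{\cC'}$ is generically a pullback along $f=b\circ a:\cC'\to\Sigma$, whose fibers have dimension $\ge 2$, so $\operatorname{rank}\sigma_{\cC'}\le\dim\cC'-2=2n-3$ at a general point. Since $e'$ is generically finite onto $D$, this says $\iota^*\sigma$ has rank $<2n-2$ at a general point of $D$, contradicting \cite[Lemma~3.3]{CMP19} and the hypothesis that $\sigma$ is symplectic at the general point of $D$. Without this descent step (or an equivalent), your argument does not close.
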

\begin{proof}
Without loss of generality we may assume that $\Sigma$ is a proper subset of $D$ and is irreducible. We fix the ruling given by Remark~\ref{rmk:uniruled-np} and assume that 
\begin{equation}\label{eq:every}
\textrm {every curve in the ruling meets } \Sigma. 
\end{equation}
From this, we will deduce that the rank of the pull back of the holomorphic $2$-form $\sigma$ on $X$ along the inclusion $\iota:D_\reg \cap X_\reg \to X_\reg$ is $< 2n-2$ at a general point where $\dim X=2n$. This is impossible, see \cite[Lemma 3.3]{CMP19}, and will conclude the proof of the theorem by contradiction.

The rank of $\iota^*\sigma$ can be determined after pull back along any surjective morphism $\cC' \to D$ as long as $\cC'$ is reduced (since such a morphism will be smooth at the generic point of $\cC'$). We will construct a generically finite such map together with a fibration $f:\cC'\to \Sigma$ with fiber dimension $\geq 2$ such that the pullback of $\sigma$ to $\cC'$ is generically a pullback from $\Sigma$. Let
\[
\xymatrix@C=12mm@R=8mm{
\cC \ar[r]^(0.4){e}  \ar[d]   &  D\supset \Sigma \\
T& \\
}
\]  
be the ruling family of curves on $D$ where the parameter variety $T$ and the family $\cC$ of curves over it are compact and irreducible, in particular, the evaluation morphism $e$ is surjective.  Here, we choose $\cC \to T$ to be smooth over a dense open set in $T$, i.e. $e$ is the normalization when restricted to the generic fiber of $\cC \to T$. 

Notice that, although by assumption every curve $C_t,\ t\in T,$ intersects $\Sigma$, we do not in general obtain a map $T\to \Sigma$ sending $t$ to a point of intersection of the corresponding curve with $\Sigma$ as there could be many points of intersection. To remedy this we proceed as follows. 
 Let $T'$ be a resolution of singularities of an irreducible component of $e^{-1}(\Sigma) \subset \cC$ such that the induced maps $T' \to \Sigma$ and $T'\to T$ are both dominant (note that $e^{-1}(\Sigma)$ could have several components). Then $T'\to T$ is finite (as the generic ruling curve will not be contained in $\Sigma$) and $T'\to \Sigma$ has relative dimension equal to $\codim_D \Sigma -1$. 
This last claim follows because all curves in the ruling meet $\Sigma$ by assumption. We denote by $\cC'$ the unique irreducible component of $T'\times_T \cC$ that dominates $\cC$ and have thus obtained the following diagram.
\begin{equation}\label{eq:key-diag}
\xymatrix{
\cC' \ar@/_2em/[dd]_f \ar[d]_a\ar[r]^p & \cC \ar[d]\ar@/^1.5em/@[blue][ddr]^{e}  &&\\
T' \ar[d]_b\ar[r] \ar@/_/[u]_s& T & &\\
\Sigma \ar@{^(->}@[red][rr]& & D \ar@{^(->}[r]^\iota & X\\
}
\end{equation}
Here, $s$ is the canonical section of $a$. Note that the diagram is not commutative. This is because $\cC'$ dominates $D$ via $e':=e\circ p$, but $f$ maps to $\Sigma$ which is strictly contained in $D$. However, the diagram becomes commutative if we either delete the red or the blue arrow. We replace $\cC'$ by a resolution of singularities and assume henceforth that it is smooth. If we denote by $\sigma_{\cC'}$ the pull back (of reflexive forms, see \cite[Theorem~14.1]{KS18} -- here we use the hypothesis of rationality of the singularities of $X$) of $\sigma$ along $\iota\circ e'$, it remains to show that $\sigma_{\cC'}$ is a pull back along $f$. By construction, $a$ is a smooth $\P^1$-fibration over a Zariski open set $U\subset T'$, so clearly the restriction $\sigma_{\cC'}\vert_{a^{-1}(U)}$ descends to a holomorphic $2$-form on $U$. This coincides however with $s^*\sigma_{\cC'}\vert_U$ and therefore has a holomorphic extension to the whole of $T'$. As $\cC'$ is smooth and $\sigma_{\cC'}-a^*s^*\sigma_{\cC'}$ is torsion, we obtain that $\sigma_{\cC'} = a^*\eta$ where $\eta=s^*\sigma_{\cC'}$ is a holomorphic $2$-form on~$T'$. 

Now consider the commutative diagram obtained from (\ref{eq:key-diag}) by deleting the blue arrow. We have to show that $\eta$ descends along $b$. But this is now immediate: by construction, $\eta$ is the reflexive pullback along $T'\to X$ (recall that $T'$ was chosen nonsingular) and this morphism factors through $T'\to \Sigma$.
\end{proof}

\begin{corollary}\label{corollary term}
Let $X$  be a  primitive symplectic variety {\it with terminal singularities}. 
Let $D\subset X$  be an irreducible uniruled divisor. Then the general curve $R$ in the ruling of $D$ (cf. Remark \ref{rmk:uniruled-np}) does not meet $X_{\sing}$. 
\end{corollary}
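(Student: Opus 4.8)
The plan is to deduce the corollary directly from Theorem~\ref{theorem:term} by a codimension argument, combined with the local structure of symplectic singularities provided by Proposition~\ref{prop:kaledin}. Write $\dim X = 2n$ and let $\Sigma = X_{\sing}$. Since $X$ has terminal singularities, by Proposition~\ref{prop:kaledin} we have $\codim_X(X_{\sing})_{\sing} \geq 4$, and away from that locus $X$ looks locally like $\C^{2n-2}$ times a surface rational double point or a smooth point; since terminal symplectic singularities cannot contain a divisor of $ADE$-singularities (an $A$-$D$-$E$ singularity on a surface is canonical but not terminal, and crossing with $\C^{2n-2}$ preserves this), in fact $\codim_X X_{\sing} \geq 4$ — this is the well-known fact that symplectic terminal singularities have singular locus of codimension at least four, and I would either cite it from Namikawa or derive it from Proposition~\ref{prop:kaledin}.

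First I would set up the dichotomy: either the general curve $R$ in the ruling of $D$ meets $X_{\sing}$, or it does not. Suppose for contradiction that every curve in the ruling meets $\Sigma := X_{\sing}$ (note that the ruling here is the \emph{unique} covering family of irreducible rational curves from Remark~\ref{rmk:uniruled-np}, so ``the general curve meets $X_{\sing}$'' is equivalent to ``every curve in the ruling meets $X_{\sing}$'', since $X_{\sing}$ is closed and the curves sweep out $D$). A primitive symplectic variety has rational singularities (indeed canonical, by the discussion after Definition~\ref{def:PSV}), so Theorem~\ref{theorem:term} applies with $\sigma$ the symplectic form — which is non-degenerate on all of $X_\reg$, hence in particular symplectic at the general point of $D$. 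The theorem then forces $\codim_X \Sigma \leq 2$.

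The contradiction is then immediate: $\codim_X X_{\sing} \geq 4 > 2$. Hence not every curve in the ruling meets $X_{\sing}$, i.e. the general curve $R$ in the ruling avoids $X_{\sing}$, which is the assertion. The only point requiring care — and the one I would expect to be the main (though minor) obstacle — is justifying $\codim_X X_{\sing} \geq 4$ for primitive symplectic varieties with terminal singularities. This follows by combining Proposition~\ref{prop:kaledin} (which gives that, outside a locus of codimension $\geq 4$, the singularities are transverse $ADE$) with the observation that a surface $ADE$ singularity times $\C^{2n-2}$ is canonical but never terminal, so the terminality hypothesis rules out the transverse-$ADE$ locus being nonempty; alternatively one simply cites Namikawa's result. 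With that in hand the proof is a two-line deduction, so I would keep the write-up short.
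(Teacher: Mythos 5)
Your proof is correct and takes essentially the same route as the paper: apply Theorem~\ref{theorem:term} with $\Sigma=X_{\sing}$ and contradict the fact that a symplectic variety has terminal singularities if and only if $\codim_X X_{\sing}\geq 4$. The paper simply cites \cite[Corollary~1]{Nam01} for that last fact, whereas you additionally sketch how to rederive it from Proposition~\ref{prop:kaledin}; both are fine.
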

\begin{proof}
This follows directly from Theorem~\ref{theorem:term} and the fact that by \cite[Corollary 1]{Nam01}, $X$ has terminal singularities if and only if $\codim ( X_{\sing})\geq 4$. 
\end{proof}

\begin{remark}\label{remark projective case}
It is worth noting that the proof of the above theorem is much easier in the projective case. Let us take a resolution of singularities $\pi:Y\to X$. By a result of Hacon--M\textsuperscript{c}Kernan, the fiber $\pi^{-1}(x)$ over every $x\in X$ is rationally chain connected by \cite[Corollary~1.5]{HM07}. If $\{C_t\}_{t\in S}$ is a family of rational curves all passing through a given point $x\in D$ and such that no curve is contained in $X_\sing$, we see that all points in the strict transforms $\wt C_t$ lie in the same ``rational orbit'', i.e. are equivalent under rational equivalence. Then the result follows easily from Mumford's theorem, see e.g \cite[Proposition 22.24]{Voisin2002}. Note that instead the proof of Theorem~\ref{theorem:term} does not need the result of Hacon--M\textsuperscript{c}Kernan, but uses Kebekus--Schnell \cite[Theorem 14.1]{KS18} instead. Note that the projective precursor \cite{Keb13} of the latter used \cite{HM07} in the proof.
\end{remark}

\begin{remark}\label{remark rcc nonprojective case}
Even though it is not needed in this article, it is interesting to note that there is an analog of \cite[Corollary~1.5]{HM07} for non-projective primitive symplectic varieties. First note that it is sufficient to show this for one resolution of singularities because the statement holds for smooth varieties and given two resolutions we can always find a third one dominating both.

Thus, we may choose a resolution $\pi : Y\to X$ satisfying $\pi_*T_Y=T_X$. Hence, we obtain a simultaneous resolution of singularities $\cY\to \cX=\{\pi_t:Y_t \to X_t\}_{t\in \Delta}$ deforming $\pi$ by \cite[Corollary~2.23]{BGL20} so that in $\Delta$ we have a dense set $\{t_i\}_{i\in I}$ of projective points (use \cite[Corollary~6.11]{BL18}). Notice that the simultaneous resolution is in fact locally trivial as a morphism (i.e. has local trivializations over $X$). In particular, the fibers of the resolution deform globally trivially. Therefore, the fiber $\pi^{-1}(x)$ is rationally chain connected as the fibers $\pi_{t_i}^{-1}(x)$ are. 
\end{remark}

We can now prove our main deformation-theoretic result. 

\begin{proof}[Proof of Theorem \ref{thm:uniruleddivisor}]
(1) By Proposition~\ref{prop:expdim}, the map $f$ deforms in a family of dimension $2n-2$. By Theorem~\ref{theorem:term}, the general deformation of $f$ in $X$ can meet $X_\sing$ only in its codimension two irreducible components, and from Proposition~\ref{prop:kaledin} we infer that the singularities of $X$ are transversal ADE-surface singularities along such components. As observed in \cite[Lemma~4.9]{BL18}, the functorial resolution of Bierstone--Milman and Villamyor \cite{BM97,Vil89} applied to the locally trivial family $\sX\to S$ gives a simultaneous resolution $\Pi:\sY \to \sX$ (here we need that $S$ is reduced). We also refer to \cite[Chapter~3]{Kol07} and \cite[Section~4]{GKK10} for properties of this resolution. By functoriality with respect to smooth morphisms we conclude that $\Pi$ is crepant over the locus of transversal ADE singularities. Now we can lift the generic deformation of $f$ in $X$ along $\Pi$ and apply Proposition~\ref{prop:defcurves}. Note that the term $\deg g^*K_Y$ vanishes because $\Pi$ is crepant at the generic deformation of $[f]$. 
Then $\mathcal M$ is smooth at a general deformation of $[f]$ because the dimension equals the expected dimension. 

To show statement (2), one can argue as in the smooth case. 
It suffices to consider the case where $B$ has dimension $1$ and passes through a very general point of the Hodge locus.
Let $\mathcal M$ be an irreducible component  of  the space of relative genus zero stable maps $\overline{\mathcal{ M}_0}(\mathcal X/B, f_*[C])$ in the local universal family $\mathcal X\to B\subset \Def^\lt(X)$ of locally trivial deformations $\mathcal{X}_t$ of $X$. Suppose $\mathcal M$ contains $M$,
Then $\mathcal M$ dominates $B$ by the previous item. Let $\mathcal D\subset \mathcal X\to B$ be the locus in $\mathcal X$ covered by the deformations of $f$ parametrized by $\mathcal M$. Since $\mathcal M$ dominates $B$, any irreducible component of $\mathcal D$ dominates $B$. Since the fiber of $\mathcal D\to B$ over $b_0$ is a divisor in $\mathcal X_{b_0}=X$, this means that at a smooth point the evaluation morphism
has maximal rank. Therefore, it remains maximal over an open set and as a consequence
 the fiber of $\mathcal D\to B$ at a general point $b$ is a divisor, which is uniruled by construction. To see that for every $b$ the corresponding variety $\mathcal X_b$ contains a uniruled divisor it suffices 
 to argue with the irreducible component of the relative Hilbert scheme which contains
 $D$. By the above such a component surjects onto the base $B$ and yields a uniruled divisor in each fiber.
 
 The N\'eron-Severi group of a general fiber $\mathcal X_b$ is generated by the dual of $[R]$, which shows the result on the cohomology class. Notice however that we cannot deduce that the class of the divisor $D_b$ inside $\mathcal X_b$ equals that of $D$. Indeed, the restriction $\mathcal M_{b_0}$ of $\mathcal M$ over $b_0$ could well be reducible 
$$
\mathcal M_{b_0}= M \cup (\bigcup_j  M_j),
$$ and therefore $\mathcal D_{b_0}$ could contain $D$ as well as all the other images $D_j$ of the evaluation morphisms 
associated to the other components $M_j$.
Nevertheless, the curves in the ruling of each of the  $D_j$'s lie in the same homology class $\alpha$.
Therefore, by deforming to a general point $b$ where the N\'eron-Severi group is generated by the dual of $\alpha$, 
we see that the divisor $\mathcal D_{b}$ has class proportional to a (positive) multiple of the dual of $\alpha$ and we are done. 
\end{proof}

\begin{remark}\label{remark terminal simplification}
Let us point out that if $X$ is terminal, the conclusion of item (1) of Theorem~\ref{thm:uniruleddivisor} follows directly from Corollary~\ref{corollary term} and  Proposition~\ref{prop:defcurves}, item~(2). In the general situation, the existence of a $\Q$-factorial terminalization would simplify the proof slightly to the effect that one would not need any properties of the functorial resolution but instead work with a simultaneous $\Q$-factorial terminalization (obtained by deforming a $\Q$-factorial terminalization of $X$, see \cite[Remark~2.29]{BGL20}) and apply Corollary~\ref{corollary term} to it. For primitive symplectic varieties, one possibly needs to pass to a different birational model in order to have a $\Q$-factorial terminalization, see \cite[Theorem~9.1]{BL18}, and therefore it seemed easier to work over the given variety at the cost of possibly introducing discrepancies.
\end{remark}

In the case of a prime exceptional divisor, i.e. an irreducible and reduced divisor $E$ on $X$ such that $q_X(E)<0$, using the Minimal Model Program we can do slightly better and deform at least a multiple of the initial divisor.   We refer the reader to e.g. \cite{BCHM10} and \cite{BBP13} for the relevant definitions that we will need in the proof. 

\begin{proof}[Proof of Theorem \ref{thm:exc-def}]
(1) Since $X$ has canonical singularities we can choose a rational $0<\epsilon \ll 1$ such that the pair $(X, \epsilon E)$ is klt. 
By e.g. the Boucksom--Zariski decomposition \cite[Theorem 1.1]{KMPP19} applied to $\epsilon E$ we see that it coincides with its negative part, and is also equal to the restricted base locus $\mathbb B_-(E)$. Recall that
\[
\mathbb B_-(E):=\bigcup_{A \textrm{ ample } \Q-\textrm{divisor}} \B(E+A)
\]
where $\B(\cdot)$ stands for the stable base locus. Now we can argue exactly as in the proof of \cite[Theorem A]{BBP13}, see also \cite[Theorem 3.3]{Dru11}. We recall the proof to record further information that will be used below.
By \cite[Lemma 1.14]{ELMNP06} there exists a  $\Q$-Cartier ample effective  divisor $A$ on $X$ such that $E$ is a component of the augmented base locus 
$\mathbb B_+(\epsilon E+A)$. Since $A$ is ample, we may furthermore assume that $(X, \epsilon E+A)$ is still klt. Using \cite[Corollary 1.4.2]{BCHM10} it is shown in \cite[Proof of Theorem A]{BBP13} that the MMP directed by $A$ leads to a birational model $f:X\dashrightarrow X'$ of $X$, together with a birational proper morphism $c:X'\to W'$ of relative Picard number $\rho(X'/W')=1$ whose exceptional locus coincides with the strict transform $E'$ of $E$.  
Therefore, $E'$ is uniruled by \cite{Kawa91} and since $\rho(X'/W')=1$ the dual to $E'$ is proportional to the class of the general curve $R'$ in its ruling. Therefore, the same conclusions hold for $E$, namely $E$ is uniruled and, since $f_*$ is an isometry, its dual $E^\vee$ is proportional to the class of a general curve $R$ of its ruling. The variety $X'$ is a locally trivial deformation of $X$ by the $\Q$-factoriality hypothesis and \cite[Theorem 6.17]{BL18}. 

Proposition \ref{prop:kaledin} implies that either $R'$ is a smooth rational curve or a union of two smooth rational curves meeting transversally. Indeed, the singularities of $W'$ are generically transversal ADE surface singularities along the image of the exceptional locus of $c:X'\to W'$. Hence, the dual intersection complex of a general fiber is a subgraph of an ADE graph. As the exceptional divisor is irreducible and the monodromy acts by a graph automorphism, the possibilities for $R'$ are as claimed.

Since, as observed above, $R'$ does not meet the indeterminacy locus of $f^{-1}$, the same holds for $R$. Moreover, for each irreducible component $C'$ of $R'$ we have
\begin{equation}\label{eq:norm}
N_{C'/X'} = \omega_{C'}\oplus \mathcal O_{C'}^{\oplus (2n-2)}
\end{equation}
The claim about the primitivity of $E$ now follows from the fact that 
$$
 E\cdot R = \deg (\omega_R)=-2.
$$

(2.a) Let $R$ be a general curve in the ruling of $E$ given by item (1). 
Let $\sX \to B:=\Hdg_{[R]}(X) \subset \Def^\lt(X)$ be the restriction of the Kuranishi family of locally trivial deformations of $X$ to the Hodge locus of $R$. We know by Theorem~\ref{thm:uniruleddivisor} that the deformations of $R$ inside $\mathcal X$ continue to cover a divisor $E_t$ on $\sX_t$. This divisor specializes to a divisor $E_0 \subset \sX_0=X$ such that $E\subset E_0$.  Let us write $E_0=mE+F$ for some $m>0$ and some effective divisor $F$ on $X$. It suffices to show that $F=0$. Suppose $F\neq 0$. For very general $t\in B$ and a rational curve $R_t \subset E_t$ which is a deformation of $R$, we see that $[R_t]^\vee$ is a positive multiple of $[E_t]$. Indeed, up to multiples there is only one Hodge class and both $[R_t]^\vee$ and $[E_t]$ pair positively with any K\"ahler class. In particular, $R_t.E_t = \lambda q_{X}(R)$ for some $\lambda > 0$ and for all $t\in B$.  

As deformations of $R$ cover $F$ and $E$, we must have $R.F \geq 0$ and $R.E\geq 0$. We therefore have
\[
\lambda q(R) = E_t.R_t= R.E_0 = mR.E + R.F \geq 0,
\]
a contradiction. This finishes the proof.

(2.b) 
By Theorem~\ref{thm:uniruleddivisor} an irreducible component $C'$ of $R'$ deforms along its Hodge locus  $B$ and the deformations of $C'$ inside 
$$\pi_{\mathcal X}: \mathcal X \to B\subset \Def^\lt(X)$$ 
continue to cover a divisor $E_b$ on $\sX_b$. Let $\pi_{\mathcal H'}:\cH' \to B$ be an irreducible component of the relative Douady space containing the point representing the pair $(X', C')$, which dominates $B$ and is smooth at $(X',C')$ by Theorem~\ref{thm:uniruleddivisor}, and let $\cC' \subset \cH' \times_B \cX$ be the universal subscheme which is smooth along $C'$.   We have the following commutative diagram
$$
\xymatrix{
\cC'\ar[d]_{p} \ar[r]^{ev} & \sX \ar[d]^{\pi_{\mathcal X}}\\
\cH'\ar[r]_{\pi_{\mathcal H'}} & B.
}
$$
Then we claim that 
\begin{equation}\label{eq:2}
\textrm{ the differential } d {ev}: T_{\cC'}\to {ev}^*T_{\cX}\textrm{  is injective along } C'.
\end{equation}
Indeed, we can argue exactly as in \cite[Proof of Lemma 5.1]{Mar13}, because (\ref{eq:norm}) holds as in the smooth case and because by Theorem~\ref{theorem:term} the tangent sheaf ${ev}^*T_{\cX}$ is locally free along $C'$. First, we notice that both 
$T_{\cC'}$ and ${ev}^*T_{\cX}$ have a filtration given respectively by 
$$
0\subset T_p \subset T_{\pi_{\cH'}\circ p} \subset T_{\cC'}
$$
and 
$$
0\subset T_C\subset (ev^*T_{X})_{|C'} \subset ({ev}^*T_{\cX})_{|C'}.
$$
The differential $d {ev}$ is compatible with the filtrations. 
Notice that  the first and third graded summands of both filtrations (restricted to $C'$) are, respectively 
$T_{C'}$ and  $\cO_{C'}^{\oplus (2n-2)}$ and $d {ev}$ obviously induces the identity on them. Therefore, it suffices to prove injectivity on the middle graded summand.
Condition (\ref{eq:norm}) implies
that the evaluation morphism 
$$
 H^0(C', N_{C'/X'})\otimes \cO_{C'}\to N_{C'/X'}
$$
is injective, which in turn implies the injectivity of differential $d {ev}$ on the middle graded summand restricted to $C'$. 

Thanks to (\ref{eq:2}) we obtain that the divisor ${ev}(\cC')$ is reduced over an open subset of $B$ and we are done.
\end{proof} 

The following general result is a converse to one of the statements of Theorem~\ref{thm:exc-def} which we will use in the proof of Theorem \ref{thm:exc}.
\begin{lem}\label{lem:neg}
Let $E\subset X$ be a uniruled prime $\Q$-Cartier divisor inside a projective primitive symplectic variety and chose a ruling. If the general curve of the ruling is either smooth or has two irreducible components meeting transversally in a single point, then $q_X(E)<0$.
\end{lem}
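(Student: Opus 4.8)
The plan is to argue by contradiction: suppose $q_X(E) \ge 0$. The strategy is to intersect $E$ with a general curve $R$ of the chosen ruling and derive a numerical contradiction, exactly as in the classical computation for $(-2)$-curves on $K3$ surfaces. First I would recall that since $E$ is uniruled with the general curve $R$ of the ruling covering $E$, and since deformations of $R$ sweep out $E$, one has $E \cdot R \ge 0$; more precisely, deforming $R$ inside $E$ one can move it away from any fixed subvariety of $E$ of smaller dimension, so $E\cdot R$ is computed by a curve not contained in $\Bs$ of anything problematic and is $\ge 0$ because two general members of the covering family meet $E$ non-negatively — actually the key point is that a general deformation $R'$ of $R$ is not contained in $E$'s bad locus, hence $E \cdot R \ge 0$. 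Wait — here one must be careful: $R \subset E$, so $E\cdot R$ need not be non-negative a priori. The correct input is the adjunction/normal bundle computation: by Theorem~\ref{thm:exc-def}(1) (or rather its proof) the general curve $R$ in a ruling of a uniruled divisor with the stated structure has $N_{R/X} \cong \omega_R \oplus \cO_R^{\oplus(2n-2)}$-type behaviour, so $E\cdot R = \deg N_{E/X}|_R = \deg(\text{det } N_{R/X}) - \deg(\text{det } N_{R/E}) = \deg\omega_R = -2$ when $R$ is smooth rational, and similarly $= -2$ in the reducible case (each component a $\PP^1$ meeting in one point).

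Next I would combine this with the BBF-form identity. On a primitive symplectic variety, for a $\Q$-Cartier divisor $E$ and a curve class $[R]$, there is a Fujiki-type relation expressing $E \cdot R$ (the topological intersection number) in terms of $q_X(E)$ and the pairing $q_X(E, \cdot)$ evaluated against $[R]$; concretely, $[R]^\vee$ is defined via the BBF form (Definition~\ref{definition dual class}), and the point is that $E \cdot R = c\, q_X(E, [R]^\vee{}^\vee) = c\, q_X(E, \gamma)$ for the class $\gamma \in H^2(X,\Q)$ with $\gamma^\vee = [R]$, up to a positive Fujiki constant $c$ depending only on the deformation type. Since the ruling covers $E$, the class $[R]$ pairs non-negatively with every Kähler class, hence so does $\gamma = [R]^{\vee\vee}$-dual; and the argument in Theorem~\ref{thm:exc-def}(2.a) shows that $[R]^\vee$ is, up to positive scalar, forced to be proportional to $[E]$ in the relevant situations (when the Picard rank is minimal) — but here we want a statement for arbitrary $X$, so instead I would argue directly: write $E\cdot R = c\, q_X(E,\gamma)$ with $\gamma$ the BBF-dual of $[R]$.

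The cleanest route is: since $E$ is effective and the ruling covers $E$, the curves of the ruling are not all contained in $\Bs|mE|$ for suitable $m$, which combined with $E$ being $\Q$-Cartier gives the projection-formula computation $E\cdot R = -2 < 0$ directly. Then I would show this forces $q_X(E) < 0$. The mechanism: the general curve $R$ of the ruling deforms in a $(2n-2)$-dimensional family covering $E$ (Proposition~\ref{prop:expdim}), so $R$ is a \emph{movable} curve class; by the description of the BBF form and positivity, $q_X(E, [R]^\vee{}^{\vee}) $ has a definite sign tied to how $E$ sits relative to the movable cone. More concretely, using that $[E]$ is (up to the factor $\tfrac12$) primitive and that $[R]^\vee$ is proportional to $[E]$ \emph{on the very general locally trivial deformation} — which one reaches by Theorem~\ref{thm:uniruleddivisor}, under which $R$ and $E$ both deform — one gets $q_X(E) = q_X(E)$ evaluated where $[E] \sim \lambda [R]^\vee$, $\lambda \in \Q$, so $E\cdot R = q_X(E, [R]^\vee{}^\vee) = $ (up to constant) $\lambda^{-1} q_X(E,E) = \lambda^{-1} q_X(E)$, and the sign of $\lambda$ is positive because both $[E]$ and $[R]^\vee$ pair positively against a common Kähler class (as in 2.a). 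Hence $q_X(E) = \lambda \cdot (E\cdot R) = \lambda \cdot (-2) < 0$.

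The main obstacle I anticipate is making rigorous the passage from the topological/numerical intersection number $E\cdot R = -2$ to a statement about $q_X(E)$ when $\rho(X)$ is arbitrary and $[R]^\vee$ need not be a priori proportional to $[E]$. The honest fix is to \emph{deform}: by Theorem~\ref{thm:uniruleddivisor}, $R$ and $E$ propagate along the Hodge locus $B$ of $[R]$ in $\Def^\lt(X)$, and at a very general point $b\in B$ one has $\NS(\cX_b) = \Q\cdot [R]^\vee$; there $[E_b]$ (the deformed divisor) is a rational multiple $\lambda [R]^\vee$ with $\lambda > 0$ by the Kähler-positivity argument of Theorem~\ref{thm:exc-def}(2.a), and $E_b \cdot R_b = E\cdot R = -2$ is deformation-invariant, while $E_b \cdot R_b = q_{\cX_b}(E_b, [R_b]^\vee{}^\vee) = \lambda\, q_{\cX_b}([R_b]^\vee,[R_b]^\vee{}^\vee)$ has the same sign as $q_{\cX_b}(E_b) = \lambda^2 q_{\cX_b}([R_b]^\vee)$, giving $q_X(E) = q_{\cX_b}(E_b) < 0$ by deformation-invariance of the BBF form. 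One subtlety to handle carefully: one must ensure $E$ genuinely deforms (not just a multiple) to conclude $q_X(E)<0$ rather than merely $q_X(E) \le 0$ — but Theorem~\ref{thm:exc-def}(2.b) or the strict inequality $E_b\cdot R_b = -2 \ne 0$ already rules out $q_X(E)=0$, so this is not a real gap.
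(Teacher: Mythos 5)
Your first step is the same as the paper's: adjunction along a general ruling curve $R\subset E$ gives $E\cdot R=\deg E|_R=\deg K_R=-2$. The divergence, and the problem, is in how you pass from this to $q_X(E)<0$. The paper does \emph{not} deform anything: it runs an MMP for $(X,\epsilon E)$ directed by an ample divisor (via \cite[Corollary~1.4.2]{BCHM10}), obtaining on a birational model a contraction of the strict transform of $E$ with relative Picard number one; this forces $E^\vee=\lambda[R]$ with $\lambda>0$ already on $X$ itself, whence $q_X(E)=\lambda\, E\cdot R=-2\lambda<0$. Your route instead tries to reach a very general point $b$ of the Hodge locus of $[R]$ where $\NS(\sX_b)=\Q\,[R_b]^\vee$ and to transport the intersection number there.

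The gap is in the sentence ``$E_b\cdot R_b=E\cdot R=-2$ is deformation-invariant.'' The divisor $E$ does \emph{not} deform along $\Hdg_{[R]}(X)$ unless $[E]$ is already proportional to $[R]^\vee$ --- which is precisely the statement you are trying to establish; what Theorem~\ref{thm:uniruleddivisor} deforms is the divisor swept out by the deformations of $R$, and its specialization to the central fibre is of the form $mE+F$ with $m>0$ and $F$ effective, not $E$. Consequently your argument can at best yield $q_X(mE+F)<0$, equivalently $q_X([R]^\vee)<0$ together with $q_X(E,[R]^\vee)=E\cdot R<0$; these two inequalities do not imply $q_X(E)<0$ when $[E]$ and $[R]^\vee$ are not proportional (take $q=\mathrm{diag}(1,-1)$, $[R]^\vee=(0,1)$, $[E]=(2,1)$). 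You cannot borrow the $F=0$ argument from the proof of Theorem~\ref{thm:exc-def}(2.a) either, since that argument uses $q_X(R)<0$, i.e.\ the exceptionality you are trying to prove --- the reasoning would be circular. The missing ingredient is exactly the proportionality $E^\vee\propto[R]$ at $X$ itself, and the paper supplies it with the relative Picard number one contraction coming from the MMP, not with a deformation argument. (A minor additional point: the pairing $E\cdot R$ is the natural pairing $H^2\times H_2\to\Q$, which by Definition~\ref{definition dual class} equals $q_X(E,[R]^\vee)$ on the nose; no Fujiki constant intervenes there.)
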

\begin{proof}
By the condition on the general curve $R$ in the ruling of $E$ and the adjunction formula we have that $E\cdot R=\deg E_{|R}=\deg K_R=-2$. If $H$ is a  $\Q$-Cartier effective ample divisor on $X$ such that the pair $(X, \epsilon E+H)$ is still klt, then any MMP for the  pair $(X, \epsilon E)$ directed by $H$ terminates and contracts the strict transform of $E$ by \cite[Corollary~1.4.2]{BCHM10}, compare with the proof of Theorem~{\ref{thm:exc-def}}. Note that having negative intersection number with $E$, the curve $R$ has to be in the restricted base locus of $E$ and as the curves of class $[R]$ cover $E$, we deduce $E=\B_-(E)$. Therefore, the dual $E^\vee$ must be equal to $\lambda R$ for some $\lambda>0$. As a consequence
$$
q_{X}(E)= \lambda E\cdot R= -2\lambda<0.
$$
\end{proof}

%
\section{Rational curves on moduli spaces of sheaves}\label{section rational curves on moduli}
%

We will start with a general lemma on the images of movable divisors under  generically finite, dominant rational maps $p:Y \ratl X$ between normal projective varieties, see Lemma~\ref{lemma rational big movable}. For this purpose, let us first discuss a notion of pushforward under such maps. If $D$ is a Cartier divisor on $Y$, we take a resolution of indeterminacies
\[
\xymatrix{
&Z\ar[dl]_\pi\ar[dr]^q&\\
Y\ar@{-->}[rr]^p&&X\\
}
\]
and define $p_*D$ as follows. Let $\sD \subset Y \times \abs D$ be the universal family over the linear system and denote by $\wt\sD \subset Z \times \abs D$ the strict transform of $\sD$. If $t\in\abs D$ denotes the point corresponding to $D$, then we define $p_*D:= q_* \wt\sD_t$ where $\wt\sD_t$ denotes the fiber over $t$. Note that by construction $p_*D$ is linearly equivalent to $p_*D'$ if $D$ is linearly equivalent to $D'$. Note that $p_*D$ is in general only a Weil divisor. This notion of pushforward has to be digested with care as the following examples show.

\begin{example}\label{example pushforward cremona}
We continue to use the above notation and let $D'$ be a general element of $\abs D$. Assume for simplicity that $D'$ is irreducible and not contracted by $p$. Then $p_*D$ is linearly equivalent to $\deg(p\vert_{D'})\cdot p(D')$ where by $p(D')$ we mean the closure of the image of $D'\vert_U$ where $U$ is the domain of definition of $p$. However, this does not hold for every element in $\abs D$. Let $Y = X = \P^2$ and take $p$ to be the Cremona transformation, i.e. $p([x:y:z])=[yz:xz:xy]$. If $L \subset Y$ is a line not meeting any of the three points of indeterminacy of $p$, then $p_*L = p(L)$ is a conic. Otherwise, the image $p(L)$ is either a line or a point, but in both cases, $p_*L$ is a union of two lines. We also have $p_*L=q_*\pi^*L$ in this case.
\end{example}

\begin{example}\label{example pushforward cubics}
Let $\Sigma:=\Bs\abs D \subset Y$ be the base locus of the linear system of $D$ and suppose that the blow up $\pi:X:=\Bl_\Sigma(Y) \to Y$ is normal. For $p:=\pi^{-1}:Y \ratl X$ we see that $p_*D \neq q_*\pi^*D$, even if $D$ is general in its linear system. (We have $Z=X$ and $q=\id$ here.) The linear system of cubics through $8$ general points in $\P^2$ gives an example where $D$ is even big and nef. More precisely, we take $Y$ to be the blow up of $\P^2$ in the $8$ given points, and we let $D$ be the strict transform of a cubic in $\P^2$ passing through these points so that following the above procedure yields $X$ as a blow up of $Y$ in one point. 
\end{example}

Now we come to the actual purpose of our discussion of pushforward.

\begin{lemma}\label{lemma rational big movable}
Let $p:Y \ratl X$ be a generically finite, dominant rational map between normal projective varieties, suppose that $X$ is $\Q$-factorial, and let $H \subset Y$ be a $\Q$-Cartier divisor. If $H$ is movable or big, then the same holds for $p_*H$.
\end{lemma}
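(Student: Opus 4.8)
The plan is to reduce both the movable and the big case to statements about a fixed resolution of indeterminacy and then push forward along the generically finite morphism $q$. First I would fix the resolution of indeterminacies $\pi:Z\to Y$, $q:Z\to X$ as in the definition of $p_*H$ preceding the statement, and write $\pi^*H = \wt H + F$ where $\wt H$ is the strict transform of a general member of $\abs{H}$ (extended $\Q$-linearly) and $F$ is $\pi$-exceptional and effective; by construction $p_*H$ is $\Q$-linearly equivalent to $q_*\wt H$. The key point is that $q$ is generically finite and surjective onto the $\Q$-factorial variety $X$, so pushforward of divisor classes along $q$ sends effective to effective, movable to movable, and big to big — the last because $q_*$ and $q^*$ on cycles satisfy $q_*q^* = (\deg q)\cdot\id$ on $\Q$-Cartier classes of $X$ (using $\Q$-factoriality to make $q^*$ defined), so volume is preserved up to the positive factor $\deg q$.

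For the \textbf{big case} this is essentially immediate: if $H$ is big on $Y$, then $\pi^*H$ is big on $Z$, hence $\wt H = \pi^*H - F$ with $F$ effective is still big on $Z$ (bigness is unaffected by subtracting an effective divisor only in the wrong direction, so instead I would argue that $\pi^*H$ big $\Rightarrow$ some multiple moves, and a general member of $\abs{m\pi^*H}$ has strict transform whose pushforward is big — more cleanly, $h^0(Z, \lfloor m\pi^*H\rfloor) = h^0(Y,\lfloor mH\rfloor)$ grows like $m^{\dim Y}$ and $q_*$ of these sections give sections of $q_*\mathcal O_Z(\lfloor m\pi^*H\rfloor)$, which one compares with $\mathcal O_X(\lfloor m\, p_*H\rfloor)$ up to bounded error coming from the exceptional and ramification divisors). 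Then $q$ generically finite gives $\vol_X(p_*H) \geq \tfrac{1}{\deg q}\vol_Y(H) > 0$.

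For the \textbf{movable case}, recall $H$ movable means $\B(\abs{mH})$ has codimension $\geq 2$ for $m$ divisible enough, equivalently $H$ lies in the closure of the cone generated by divisors with base locus of codimension $\geq 2$. I would take $m$ with $\codim_Y \B(\abs{mH}) \geq 2$; pulling back, $\pi^*(\abs{mH})$ has the property that its moving part $\abs{mH}_{\mathrm{mob}}$ on $Z$ is base-point-free in codimension one away from $\pi$-exceptional stuff, and a general member $\wt H_m$ avoids every codimension-one subvariety of $Z$ not coming from the $\pi$-exceptional or base locus — in particular $q(\wt H_m)$ is not contained in $\Sing X$ and is not a pushforward-killed component. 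Then $q_*(\wt H_m) = (\deg q) \cdot (p_*(mH))$ moves in a linear system on $X$ (the image system) whose base locus has codimension $\geq 2$, because a codimension-one base component of $q_*\abs{\,\cdot\,}$ would pull back to a codimension-one base component upstairs (here one uses $X$ normal and $\Q$-factorial so that $q^*$ of a Weil divisor is defined and one can compare). Hence $p_*H$ is movable.

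The \textbf{main obstacle} I anticipate is bookkeeping the difference between $p_*D$ (defined via strict transforms of the universal family, as the authors emphasize in Examples~\ref{example pushforward cremona} and~\ref{example pushforward cubics}) and the ``naive'' pushforward $q_*\pi^*D$: these genuinely differ, and the whole point of the lemma is that the \emph{geometric} pushforward $p_*D$ — which throws away the exceptional contributions — is what inherits positivity. So the careful step is to verify that for a \emph{general} member the strict transform pushforward agrees with $\deg(p|_{D'})\cdot p(D')$ (as noted in Example~\ref{example pushforward cremona}) and that this general-member description is the one governing movability/bigness of the class $p_*H$; the bigness estimate via $h^0$ and the movability statement about codimension of base loci both have to be run with this correct notion, tracking that the discarded divisors are genuinely non-moving and hence do not affect the conclusion. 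Once that identification is in place, the positivity transfer along the generically finite $q$ into the $\Q$-factorial $X$ is routine.
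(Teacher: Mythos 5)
Your treatment of the movable case is essentially the paper's argument: a divisorial fixed component $D$ of $\abs{p_*H}$ must lie in the image of $p$, and since $p$ is generically finite there are only finitely many divisors of $Y$ lying over $D$, one of which is then forced to be a fixed component of $\abs{H}$. The paper phrases this via preimages rather than via pulling back a Weil divisor along $q$ (so it never actually needs $q^*$ of a Weil divisor to be defined), but the content is the same and this part of your proposal is fine.

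The bigness case is where your argument has a genuine gap, and it sits exactly at the point you flag as the ``main obstacle'' without resolving it. Two concrete problems. First, the intermediate divisor $\wt H=\pi^*H-F$ need not be big on $Z$ even when $H$ is big on $Y$: in Example~\ref{example pushforward cubics} one has $Y=\Bl_{8\,\mathrm{pts}}\P^2$, $H=-K_Y$ big and nef, $Z=X=\Bl_{9\,\mathrm{pts}}\P^2$ and $\wt H=-K_X$ of self-intersection $0$; so the reduction ``push a big class on $Z$ forward along the generically finite $q$ and use $q_*q^*=(\deg q)\cdot\id$'' never gets off the ground. Second, the comparison of $q_*\cO_Z(\lfloor m\pi^*H\rfloor)$ with $\cO_X(\lfloor m\,p_*H\rfloor)$ ``up to bounded error'' is not available: the discrepancy is $q_*\wt{H_m}-m\,q_*\wt H$ (with $\wt{H_m}$ the strict transform of a general member of $\abs{mH}$), an effective divisor growing linearly in $m$ and pointing in the wrong direction for a lower bound on $h^0(m\,p_*H)$. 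The paper's own argument avoids volumes and sheaf pushforwards entirely: the assignment $D\mapsto p_*D$ maps $\abs{mH}$ into the single linear system $\abs{p_*(mH)}$ (all these pushforwards are linearly equivalent by construction) and is generically finite onto its image, again because only finitely many divisors of $Y$ lie over a given divisor of $X$; hence $\dim\abs{p_*(mH)}\ge\dim\abs{mH}$, which grows like $m^{\dim X}$. Be aware, though, that even this dimension count still requires relating $p_*(mH)$ to $m\,p_*H$, i.e.\ an additivity of $p_*$ which Example~\ref{example pushforward cubics} shows can fail (there $p_*(2H)=2\pi^*H$ while $2\,p_*H=2\pi^*H-2E$, and $p_*H$ is in fact not big although $H$ is). So the subtlety you correctly identified is real; a complete write-up has to either restrict to the situation in which the lemma is applied (where the general member of $\abs{H}$ picks up no excess along the indeterminacy locus, so that $p_*$ is additive on multiples of $H$) or reformulate the bigness conclusion in terms of the systems $\abs{p_*(mH)}$.
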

\begin{proof}
Recall that by definition of push forward a general element of $p_*\abs H$ has support equal to the image of a general element of $\abs H$ under $p$. In particular, if $D\subset X$ is a fixed component of $p_*\abs{H}$, then it must be in the image of $p$, in other words, $p$ is well-defined at a point lying over the generic point of $D$. A fortiori this holds for fixed components of $\abs{p_*H}$. As $p$ is generically finite, over a given divisor in $X$ there are at most finitely many divisors in $Y$. In particular, there is an irreducible component of $p^{-1}(D)$ which is a fixed component of $\abs H$. Thus, if $H$ is movable, also $p_*H$ is movable. Note that we have also seen that $\dim p_*\abs{H} \leq \dim\abs{p_*H}$. Applying this to multiples of $H$ shows bigness statement.
\end{proof}

\subsection{Uniruled ample divisors}  
We use the construction introduced in \cite[Lemma 3.9]{PR18} and already used in Proposition \ref{prop:not-triple}. 
Let $S$ be a projective $K3$ surface with an ample divisor $H$. Consider the Mukai vector $v=(0, m\cdot H, 0),\ m\geq 1$ and the moduli space $M_v:=M_v(S,H)$. From the proof of Proposition \ref{prop:not-triple}, we know that $M_v$ is generically finitely dominated by an irreducible holomorphic symplectic manifold $Y$ of $K3^{[n]}$-type and such that $\rho(Y)\geq 2$ by Mukai's isomorphism $v^\perp\cong H^2(Y)$. Here, $Y$ is a crepant resolution of the moduli space $M_{v'}(S,H)$ given by Claim~\ref{claim:stab}, where $v'=(0,m\cdot H,1-\frac{m^2H^2}{2})$ and the dominant map is given in \eqref{eq:PR}.  

We claim that the manifold $Y$ contains infinitely many ample uniruled divisors.
To see this we argue as in \cite[Corollary 4.7]{CMP19}. 
Since $Y$ is projective and has Picard rank at least two, its Picard lattice is indefinite and contains primitive elements of arbitrarily positive BBF square. The same holds for ample classes. Let $h$ be any ample divisor on $Y$ such that $q(h)\geq (2n-2)^2(n-1)$. Recall that if $h$ is a non-zero element of a (nondegenerate) lattice $\Lambda$, the \emph{divisibility} $\div(h)$ of $h$ is the nonnegative integer $t$ such that $h\cdot \Lambda=t\,\Z$. Let $\alpha\in H_2(Y,\mathbb{Z})$ be such that $\alpha^\vee=h/\div(h)$ where $(\cdot)^\vee$ denotes the dual as in Definition~\ref{definition dual class}. In particular, $\alpha$ is a Hodge class. 
We know that $H^2(Y,\Z)=\Lambda_{\rm K3} \oplus\left\langle 2-2n\right\rangle$ where $\Lambda_{\rm K3}$ denotes the unimodular K3-lattice, so the divisibility of $h$ is at most $2n-2$. Therefore, if $q(h)\geq (2n-2)^2(n-1)$, the class $\alpha$ has square at least $n-1$ so that \cite[Proposition~4.6]{CMP19} applies and our claim  follows from \cite[Theorem 4.5]{CMP19}. Notice that the natural pushforward from $H^2(Y,\mathbb{Q})$ to $H^2(M_v,\mathbb{Q})$ is surjective, so that the images of the uniruled divisors on $Y$ span the full Picard lattice of $M_v$, which has rank two for very general $S$ by Mukai's isomorphism (see \cite[Theorem 1.23]{PR18}).

As $M_v$ is $\Q$-factorial by \cite[Theorem A]{KLS06} and \cite[Theorem 1.1]{Pe10}, the push-forward of any ample uniruled divisor $D_Y$ on $Y$ yields a big and movable uniruled divisor $D_v$ on $M_v$ by Lemma~\ref{lemma rational big movable}. Being movable implies that $q_{M_v}(D_v)\geq 0$ which can be seen from the formula \eqref{eq bbf}. By termination of an MMP with scaling for the pair $(M_v,D_v)$, see \cite[Corollary~1.4.2]{BCHM10}, we obtain a birational model $(X,D)$ such that $D$ is nef. But $D_v$ is movable, so the MMP $M_v \ratl \ldots \ratl X$ only consists of flops. In particular, $X$ has $\Q$-factorial terminal singularities by \cite[Proposition 3.37]{KM98} and $q_{M_v}(D_v) = q_X(D) > 0$. Here we used that $D$ is big and nef so that its top self intersection and hence, by Fujiki's formula \cite[Proposition 5.15]{BL18}, its BBF square are positive. By the terminality of $M_v$, see \cite[Remark~1.21]{PR18}, Proposition \ref{prop:terminal}, and Theorem~\ref{theorem:term}, the general curve in the ruling of $D_{M_v}$ does not meet the singular locus of $M_v$. 
By all the above we obtain the following.
\begin{prop}\label{prop:amp-k3}
Let $S$ be a general $K3$ surface and $H$ a primitive ample divisor. Let $m\geq 1$ be a positive integer and consider the moduli space of stable sheaves $M_v$ of Mukai vector $v:=(0,mH,0)$ (and $v$-generic polarization). Then $M_v$ contains infinitely many non-proportional  uniruled divisors with different positive squares, and the general curve in the ruling of such a divisor avoids the singular locus of $M_v$. \qed  
\end{prop}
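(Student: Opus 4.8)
The plan is to specialise the discussion preceding the statement to the Mukai vector $v=(0,mH,0)$ and to assemble the ingredients. First I would invoke Proposition~\ref{prop:not-triple} — more precisely, Claim~\ref{claim:stab} from its proof together with the Perego--Rapagnetta map~\eqref{eq:PR} — to obtain a generically finite dominant rational map $p\colon Y\ratl M_v$ from an irreducible holomorphic symplectic manifold $Y$ of $K3^{[n]}$-type. Mukai's isomorphism $v^\perp\cong H^2(Y,\Z)$ forces $\rho(Y)\ge 2$ when $S$ is general, so the Picard lattice of $Y$ is indefinite and contains primitive \emph{ample} classes $h$ of arbitrarily large BBF square $q_Y(h)$. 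For such an $h$ with $q_Y(h)\ge (2n-2)^2(n-1)$ I would take $\alpha\in H_2(Y,\Z)$ with $\alpha^\vee=h/\div(h)$ (Definition~\ref{definition dual class}) and use the decomposition $H^2(Y,\Z)\cong\Lambda_{\rm K3}\oplus\langle 2-2n\rangle$ to bound $\div(h)\le 2n-2$, so that $q_Y(\alpha)\ge n-1$; then \cite[Proposition~4.6]{CMP19} applies and \cite[Theorem~4.5]{CMP19} yields an ample uniruled divisor $D_Y$ on $Y$ whose ruling curves are, up to proportionality, dual to $\alpha$. Letting $q_Y(h)$ range over its infinitely many admissible values produces infinitely many such $D_Y$, arguing as in \cite[Corollary~4.7]{CMP19}.

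The second step is to push these divisors forward. Since $M_v$ is $\Q$-factorial by \cite[Theorem~A]{KLS06} and \cite[Theorem~1.1]{Pe10}, Lemma~\ref{lemma rational big movable} shows that, taking $D_Y$ general in the ample linear system produced above, $D_v:=p_*D_Y$ is big and movable; moreover $D_v$ is supported on the image $p(D_Y)$, hence uniruled, being dominated via the generically finite $p$ by the uniruled $D_Y$. Movability together with formula~\eqref{eq bbf} gives $q_{M_v}(D_v)\ge 0$. To upgrade this to strict positivity I would run an MMP with scaling for the pair $(M_v,D_v)$, terminating by \cite[Corollary~1.4.2]{BCHM10} on a birational model $(X,D)$ with $D$ nef; as $D_v$ is movable, this MMP consists only of flops, so $X$ is $\Q$-factorial terminal by \cite[Proposition~3.37]{KM98} and $q_{M_v}(D_v)=q_X(D)$. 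Since $D$ is big and nef, $\int_X D^{2n}>0$, hence $q_X(D)>0$ by the Fujiki relations \cite[Proposition~5.15]{BL18}; thus $q_{M_v}(D_v)>0$. For the claim on the singular locus, I would recall that $M_v$ is terminal by \cite[Remark~1.21]{PR18} (see also Proposition~\ref{prop:terminal}), so $\codim_{M_v}\Sing(M_v)\ge 4$; Theorem~\ref{theorem:term}, applied to the ruling of $D_v$, forces every irreducible subvariety met by all ruling curves to have codimension $\le 2$, whence the general curve in the ruling of $D_v$ cannot meet $\Sing(M_v)$.

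Finally, for non-proportionality and the spread of the squares, the pushforward $H^2(Y,\Q)\to H^2(M_v,\Q)$ is surjective and, for very general $S$, $\Pic(M_v)$ has rank two by Mukai's isomorphism \cite[Theorem~1.23]{PR18}; hence the classes $[D_v]$ already span $\Pic(M_v)_\Q$, and bookkeeping of which positive BBF values are realised yields infinitely many non-proportional uniruled divisors on $M_v$ with pairwise distinct positive squares, which is the assertion.

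I expect the one genuinely delicate point — by now mostly bookkeeping, given Sections~\ref{ss:mod} and~\ref{section rational curves np} and Lemma~\ref{lemma rational big movable} — to be the transfer of positivity and of uniruledness across the non-finite pieces of the argument. One must check that $p_*D_Y$ carries an honest ruling, so that Theorem~\ref{theorem:term} is applicable, and that movability together with bigness really forces $q_{M_v}(D_v)>0$ and not merely $q_{M_v}(D_v)\ge 0$. The first is controlled by the explicit description of $p_*$ in Lemma~\ref{lemma rational big movable} (and the cautionary Examples~\ref{example pushforward cremona}--\ref{example pushforward cubics}), and the second by the flops-only MMP together with the Fujiki relations.
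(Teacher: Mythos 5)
Your proposal is correct and follows essentially the same route as the paper: the Perego--Rapagnetta map from the $K3^{[n]}$-type resolution $Y$, the existence of infinitely many ample uniruled divisors on $Y$ via \cite[Proposition~4.6 and Theorem~4.5]{CMP19} and the divisibility bound from $\Lambda_{\rm K3}\oplus\langle 2-2n\rangle$, pushforward via Lemma~\ref{lemma rational big movable} using $\Q$-factoriality, the flops-only MMP with Fujiki's relations to get strict positivity of the square, and terminality plus Theorem~\ref{theorem:term} for the ruling curves avoiding $\Sing(M_v)$. All the steps and citations match the paper's argument, so there is nothing to add.
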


As observed in \cite[Lemma 3.9]{PR18}, we have an analogous map in the abelian case. In this case, one argues exactly in the same way, replacing $M_u$ and $M_v$ by $K_u$ and $K_v$ and invoking \cite[Corollary 2.3]{MPcorr} instead of \cite[Corollary 4.7]{CMP19}. We then record the following:

\begin{prop}\label{prop:amp-ab}
Let $A$ be a general abelian surface and $H$ a primitive ample divisor. Let $m\geq 1$ be a positive integer and consider the Albanese fiber of the moduli space of stable sheaves $K_v$ of Mukai vector $v:=(0,mH,0)$ (and $v$-generic polarization). Then $K_v$ contains infinitely many non-proportional uniruled divisors with different positive squares and the general curve in the ruling of such a divisor avoids the singular locus of~$K_v$. \qed 
\end{prop}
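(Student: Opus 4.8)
The plan is to run the proof of Proposition~\ref{prop:amp-k3} essentially verbatim, performing the translation already flagged in \ref{ss:PR}: everywhere replace the moduli spaces of sheaves $M_u,M_v$ on the $K3$ surface by the Albanese fibres $K_u,K_v$ on the abelian surface $A$, replace \cite{BM14projectivity} by \cite{MYY13} in the inputs coming from Bridgeland stability, and replace the existence result for uniruled ample divisors on manifolds of $K3^{[n]}$-type, \cite[Corollary 4.7]{CMP19}, by its $\mathrm{Kum}_n$-type counterpart \cite[Corollary 2.3]{MPcorr}. Explicitly, with $v=(0,mH,0)$ one sets $u:=(0,mH,1-(g-1))$ where $g-1=\tfrac{m^2H^2}{2}$; this is a \emph{primitive} Mukai vector of the same square as $v$, and from \ref{ss:PR} (after \cite[Lemma 3.9]{PR18}, \cite[Theorem 0.2]{Yo-ab}, \cite[Theorem 4.4]{KLS06}) one has the dominant, generically finite rational map $\mathrm{PR}\colon K_u\dashrightarrow K_v$ of \eqref{eq:PR}.

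First I would exhibit a smooth irreducible holomorphic symplectic variety $Y$ of $\mathrm{Kum}_n$-type, with $\rho(Y)\ge 2$, dominating $K_v$ by a generically finite rational map $p$. If $A$ is general this is simply $Y=K_u$, which is then smooth of $\mathrm{Kum}_n$-type by the abelian analog of \eqref{eq:smooth}; in general one invokes the abelian version of Claim~\ref{claim:stab} (the same argument as in the proof of Proposition~\ref{prop:not-triple}, with \cite{MYY13} in place of \cite{BM14projectivity}) to obtain a crepant resolution $\nu\colon Y\to K_u$ with $Y$ of $\mathrm{Kum}_n$-type, and one takes $p:=\mathrm{PR}\circ\nu$. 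The inequality $\rho(Y)\ge 2$ follows from Mukai's Hodge isometry relating the transcendental part of $H^2(Y)$ with $u^\perp$, so that $\mathrm{NS}(A)$ and the exceptional (respectively $\delta$-)class of $Y$ contribute at least two independent algebraic classes.

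Next I would feed $Y$ into \cite[Corollary 2.3]{MPcorr}: being a projective manifold of $\mathrm{Kum}_n$-type with $\rho(Y)\ge 2$, $Y$ has indefinite Picard lattice and hence carries primitive ample classes $h$ with $q(h)$ arbitrarily large; since $H^2(Y,\Z)\cong U^{\oplus 3}\oplus\langle-2(n+1)\rangle$, the divisibility $\div(h)$ divides $2(n+1)$ and is bounded, so the Hodge classes $\alpha\in H_2(Y,\Z)$ with $\alpha^\vee=h/\div(h)$ have arbitrarily large square; thus for $q(h)\gg 0$ the hypotheses of \cite[Corollary 2.3]{MPcorr} are met and that result yields an ample uniruled divisor $D_Y\subset Y$ proportional to $h$, giving infinitely many non-proportional such divisors with pairwise distinct positive BBF squares. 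I would then push $D_Y$ forward along $p$: since $K_v$ is $\Q$-factorial (Proposition~\ref{prop:terminal}; see also \cite[Theorem A]{KLS06}, \cite[Theorem 1.1]{Pe10}), Lemma~\ref{lemma rational big movable} shows that $D_v:=p_*D_Y$ is big and movable, and it is uniruled because it is dominated by the images of the covering family of rational curves on $D_Y$. Movability gives $q_{K_v}(D_v)\ge 0$ through \eqref{eq bbf}; running an MMP with scaling for $(K_v,\epsilon D_v)$, which terminates by \cite[Corollary~1.4.2]{BCHM10} and, $D_v$ being movable, consists only of flops, produces a birational model $(X,D)$ with $D$ big and nef, $X$ having $\Q$-factorial terminal singularities by \cite[Proposition 3.37]{KM98}, and $q_{K_v}(D_v)=q_X(D)>0$ by the Fujiki relations \cite[Proposition 5.15]{BL18}. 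Finally, when $K_v$ is terminal (Proposition~\ref{prop:terminal}; see also \cite[Remark~1.21]{PR18}) one has $\codim_{K_v}(K_v)_\sing\ge 4$, so Theorem~\ref{theorem:term} applied with $\Sigma=(K_v)_\sing$ rules out every curve of the ruling meeting $\Sigma$, whence the general curve in the ruling of $D_v$ avoids the singular locus.

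The argument has no single hard point beyond the two black boxes it rests on --- \cite[Corollary 2.3]{MPcorr} for the $\mathrm{Kum}_n$-type existence statement and \cite[Lemma 3.9]{PR18} (with \cite{Yo-ab}, \cite{KLS06}) for the abelian Perego--Rapagnetta map --- so the real work is checking that these substitutions are legitimate and that the chain of birational operations is coherent: one must be sure that the push-forward of the chosen covering family of rational curves still covers $p_*D_Y$ (so that the image is genuinely uniruled with a controllable ruling), and that flops and the correspondence $p$ preserve both the BBF form and uniruledness; this is precisely where Lemma~\ref{lemma rational big movable}, Theorem~\ref{theorem:term} and the $\Q$-factoriality/terminality of $K_v$ must be combined. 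The one genuinely abelian-specific subtlety is the terminality input of the last step: by Proposition~\ref{prop:terminal}, $K_v=K_{mw}$ is terminal exactly when $(m,k)\ne(2,1)$, so in the OG6 case ($w^2=2$, $m=2$) the singular locus has codimension $2$ and the final assertion would require a separate treatment (for instance on the symplectic resolution) or a mild restriction of the hypotheses, whereas the existence of infinitely many non-proportional positive uniruled divisors is unaffected.
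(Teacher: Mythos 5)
Your proposal is correct and follows the paper's own route exactly: the paper's proof of Proposition~\ref{prop:amp-ab} consists precisely of the remark that one repeats the argument for Proposition~\ref{prop:amp-k3} with $M_u,M_v$ replaced by $K_u,K_v$ and \cite[Corollary 4.7]{CMP19} replaced by \cite[Corollary 2.3]{MPcorr}, which is what you carry out in detail (including the Perego--Rapagnetta map from \cite[Lemma 3.9]{PR18}, the push-forward via Lemma~\ref{lemma rational big movable}, the MMP/flop argument for positivity of the square, and Theorem~\ref{theorem:term} combined with terminality for the last assertion). Your closing caveat about the non-terminal case $(m,k)=(2,1)$ is a fair observation that the paper itself does not explicitly address.
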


\begin{proof}[Proof of Theorem \ref{thm:amp}] 
Let $\mathfrak M$ be any moduli space  of polarized primitive symplectic varieties equivalent under locally trivial deformations  to a moduli space  
$M_v(S,\sigma)$ (respectively to $K_v(S,\sigma)$).  By our hypothesis, $\sigma$ is  $v$-generic.
By Corollary \ref{cor:object_to_sheaves}, there is a point in $\mathfrak M$ corresponding to a moduli space of sheaves $X=M_v(S,H)$ (respectively to $X=K_v(S,H)$ in the abelian case) where $v=(0,mH,0)$ and $NS(S)=\mathbb{Z}H$.

 By Propositions \ref{prop:amp-k3} and \ref{prop:amp-ab} the variety $X$ contains infinitely many uniruled divisors $D_v$ of different positive squares. Let $h$ be a positive primitive generator of the image of $\langle D_v\rangle $ under the marking $\phi:H^2(X,\mathbb Z )\to \Lambda$. 
Let $\mathfrak M_h$ be the connected component of $\mathfrak M$ containing  $X$ endowed with the polarization $D_v$. By Theorem \ref{thm:uniruleddivisor} at each point $b$ of $\mathfrak M_h$ the corresponding variety $X_b$ contains a uniruled divisor whose dual class is a positive multiple of the polarization $h$ and we are done. 
\end{proof} 

\begin{rmk}\label{rmk:one}
If $S$ is a $K3$ (or an abelian) surface with $\rho\geq 2$, $H$ an ample divisor on it, and $v=(0,mH,0)$ such that $H$ is $v$-generic, the proof above still yields the existence of a connected component of the moduli space of marked locally trivial deformations of the moduli spaces of sheaves $M_v(S,H)$ (or $K_v(S,H)$). However, as long as we do not control the singularities of $M_v(S,H)$, this variety could possibly have Picard rank one in which case we cannot obtain infinitely many nonproportional uniruled divisors.  
\end{rmk}

\begin{rmk}\label{rmk:monodromy}
A natural follow-up question to Theorem \ref{thm:amp} would consist in determining the cardinality of connected components of $\mathfrak{M}$ which are not covered by the above construction. This would require two ingredients: first one would need to compute precisely the classes of the uniruled divisors, aiming to obtain all (or with finitely many exceptions) isometry orbits of positive divisors. Then one would need to determine their orbit under the monodromy group as these orbits give connected components of $\mathfrak{M}$. The monodromy group is a finite index subgroup of the isometry group of $H^2$ by \cite[Theorem 1.1 (1)]{BL18}, and it is given by all isometries which can be obtained by parallel transport.  A careful study of the Perego--Rapagnetta map could give the first ingredient, but it could still happen that infinitely many of these isometry orbits split into more than one monodromy orbit. Therefore, even if we were able to construct uniruled divisors in all but finitely many isometry orbits, we might still fail to cover countably many monodromy orbits and hence countably many components of $\gothM$. To control the splitting behavior, one would need to determine the monodromy groups for the moduli spaces $M_v$.
\end{rmk}

\subsection{Prime exceptional divisors}\label{ss:exc}

Let $S$ be a general $K3$ surface and a $H$ primitive ample divisor. Let $m\geq 1$ be a positive integer and consider the moduli space of sheaves $M_v$ of Mukai vector $v:=(0,mH,0)$ (and $v$-generic polarization). It has dimension $2g$, where $g$ is the arithmetic genus of $mH$. A general element in this moduli space is an invertible sheaf of degree $g-1$ on a smooth curve $C \in |mH|$. In other words $M_v$ contains as open dense subset the relative Picard variety 
$ \mathcal{J}^{g-1}_{|mH|^\circ}\to |mH|^\circ$ 
fibered over the open subset $|mJ|^\circ\subset |mJ|$ parametrizing smooth curves. For a general choice of $(g-1)$-points $\xi=P_1+P_2+ \dots + P_{g-1}$ on S, there is a pencil $\mathbb P^1_\xi\cong \mathbb P H^0(S, \mathcal O_S(mH)\otimes I_{\xi})$ of curves passing through them.
This pencil comes with a map $\mathbb P^1_\xi \to M_v$ sending a point $t$ in the pencil to the sheaf $\mathcal{O}_{C_t}(P_1+P_2+\dots + P_{g-1})$ in the Picard variety of the curve $C_t$ and thereby gives rise to a section of the lagrangian fibration (\ref{eq:lagr}) restricted to the pencil. This also proves that the rational curve defined in this way is smooth. 

By letting the points vary, we have a $(2g-2)$ dimensional family of rational curves. 
To see that we cover a divisor $D$ in $M_v$ observe that for any smooth $C \in |mH|$ the image of Abel-Jacobi morphism
$C^{(g-1)}\to J^{g-1}(C)$ is a divisor (which is a translate of the theta divisor, see e.g. \cite[p.~338]{GH94}). Therefore, the image of relative Abel-Jabobi morphism over $|mJ|^\circ$
$$
 \mathcal C^{(g-1)}_{|mJ|^\circ} \to \mathcal{J}^{g-1}_{|mJ|^\circ} \subset M_v
$$
is a uniruled divisor which has negative square by Lemma \ref{lem:neg}.

By the terminality of $M_v$ proven in Proposition \ref{prop:terminal} and by Theorem~\ref{theorem:term}, the general curve in the ruling of $D$ does not meet the singular locus of $M_v$. 

By all the above we obtain the following.
\begin{prop}\label{prop:exc-k3}
Let $S$ be a general $K3$ surface and a $H$ primitive ample divisor. Let $m\geq 1$ be a positive integer and consider the moduli space of stable objects $M_v$ of Mukai vector $v:=(0,mH,0)$ (and $v$-generic stability condition). Then $M_v$ contains a prime exceptional divisor and the general curve of its ruling avoids the singular locus of $M_v$. \qed
\end{prop}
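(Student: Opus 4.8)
## Proof Proposal

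The plan is to construct explicitly a $(2g-2)$-dimensional family of smooth rational curves on $M_v$ that sweeps out a divisor, verify that this divisor is prime exceptional (i.e. has negative BBF square), and then invoke Theorem~\ref{theorem:term} together with the terminality of $M_v$ to conclude that the general ruling curve avoids the singular locus. Since everything in the displayed Proposition~\ref{prop:exc-k3} has in fact already been spelled out in the paragraphs immediately preceding its statement, the ``proof'' is essentially a recollection of that discussion; I would write it as follows.

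\begin{proof}
By \cite[Theorem~6.7]{BM14projectivity} we may choose the $v$-generic stability condition $\sigma$ so that $M_v(S,\sigma)$ coincides with the Gieseker moduli space $M_v(S,H)$; it therefore suffices to treat the latter. First I would recall that $M_v$ has dimension $2g$, where $g$ is the arithmetic genus of a curve in $|mH|$, and that it contains the relative compactified Jacobian $\mathcal{J}^{g-1}_{|mH|^\circ}\to |mH|^\circ$ as a dense open subset, together with the Lagrangian fibration \eqref{eq:lagr}. Then, fixing $g-1$ general points $\xi=P_1+\dots+P_{g-1}$ on $S$, the pencil $\PP^1_\xi=\PP H^0(S,\mathcal O_S(mH)\otimes I_\xi)$ of curves through $\xi$ yields a map $\PP^1_\xi\to M_v$, $t\mapsto \mathcal O_{C_t}(P_1+\dots+P_{g-1})$, which is a section of the restriction of \eqref{eq:lagr} to the pencil; in particular its image is a smooth rational curve. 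Letting $\xi$ vary gives a $(2g-2)$-dimensional family of such curves.

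Next I would show that this family covers a divisor $D\subset M_v$ and that $D$ is exceptional. For a fixed smooth $C\in|mH|$, the image of the Abel--Jacobi map $C^{(g-1)}\to J^{g-1}(C)$ is a translate of the theta divisor, in particular a divisor; running this in families over $|mH|^\circ$ shows that the image of the relative Abel--Jacobi morphism $\mathcal C^{(g-1)}_{|mH|^\circ}\to \mathcal{J}^{g-1}_{|mH|^\circ}\subset M_v$ is a uniruled divisor $D$, ruled by the curves just constructed. Since these curves are smooth rational, Lemma~\ref{lem:neg} (applicable because $M_v$ is projective, primitive symplectic by Theorem~\ref{thm:PR1}, $\Q$-factorial by Proposition~\ref{prop:terminal}, and the ruling curve is smooth) gives $q_{M_v}(D)<0$, so $D$ is prime exceptional.

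Finally, by Proposition~\ref{prop:terminal} the moduli space $M_v$ has terminal singularities (here $(m,k)\neq(2,1)$ is automatic since $k=\tfrac12 m^2H^2\geq m^2$ when $S$ is general, or more simply because the relevant case would force $w^2=2$ with $w=(0,H,0)$ of square $H^2\ge 2$, handled by the codimension estimate), hence $\codim(M_{v,\sing})\geq 4$ by \cite[Corollary~1]{Nam01}. Theorem~\ref{theorem:term}, applied to $X=M_v$, the uniruled divisor $D$, and the symplectic form $\sigma$ on $M_v$, then shows that a subvariety met by every curve in the ruling has codimension at most $2$; since $M_{v,\sing}$ has codimension at least $4$, the general curve in the ruling of $D$ cannot meet $M_{v,\sing}$. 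This proves the proposition.
\end{proof}

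The main (and only genuine) obstacle is the verification that $D$ is \emph{prime} exceptional rather than merely a uniruled divisor of negative square: one must be sure the family constructed really sweeps out an irreducible divisor and that the generic ruling curve is irreducible and smooth, so that Lemma~\ref{lem:neg} applies and gives $E\cdot R=\deg K_R=-2<0$. The transversality/irreducibility of the theta divisor in the relative setting over $|mH|^\circ$, and the genericity of $\xi$ ensuring the pencil $\PP^1_\xi$ is base-point free away from $\xi$ so that the section is an embedded $\PP^1$, are the points that need care; everything else is bookkeeping with results already established (Theorem~\ref{thm:PR1}, Proposition~\ref{prop:terminal}, Theorem~\ref{theorem:term}, Lemma~\ref{lem:neg}).
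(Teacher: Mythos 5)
Your proposal follows the paper's own argument essentially verbatim: the pencil $\PP^1_\xi$ of curves in $|mH|$ through $g-1$ general points giving a section of the Lagrangian fibration, the relative Abel--Jacobi image as the uniruled divisor, Lemma~\ref{lem:neg} for negativity of the square, and terminality plus Theorem~\ref{theorem:term} to keep the general ruling curve off the singular locus. One caveat: your parenthetical claim that $(m,k)\neq(2,1)$ is automatic is wrong --- with $v=mw$ and $w=(0,H,0)$ one has $k=H^2/2$, not $\tfrac12 m^2H^2$, so the case $m=2$, $H^2=2$ (the OG10 setting, where $M_v$ is \emph{not} terminal) is not excluded by your computation; the paper itself silently passes over this case when invoking Proposition~\ref{prop:terminal}, so this does not affect the comparison, but your stated justification should be deleted or repaired.
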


\begin{proof}[Proof of Theorem \ref{thm:exc}] 
Let $\mathcal M_0\subset \mathcal M_\Lambda$ be the connected component containing the  moduli space $M_v(S,H)$ (endowed with some marking). For the latter, Proposition \ref{prop:exc-k3} ensures the existence of a prime exceptional divisor $D$. 

Note that by the generality assumption on $S$, the moduli space $M_v(S,H)$ has $\Q$-factorial singularities thanks to \cite[Remark~1.21]{PR18} and therefore up to choice of a marking lies in the same connected component as $M_v(S,\sigma)$. Let us choose a marking $\phi$ on $M_v(S,H)$, put $e:=\phi (D)\in \Lambda$, and consider the Hodge locus $\mathcal D_0 \subset \sM_0$ of $e$. By Theorem \ref{thm:uniruleddivisor}, there exists an non-empty open subset $\mathcal D^o_0$ of $\mathcal D_0$ whose points $(X,f)$ correspond to varieties $X$ all contain a uniruled prime exceptional divisor. On special points of $\mathcal D_0$ the exceptional divisor may not be prime anymore. In this case, one of its components (with the reduced structure) must have negative square and therefore be prime exceptional. We obtain the statement on all the connected components of $\mathcal M_\Lambda$ by the action of the orthogonal group $(X,f)\mapsto (X,g\circ f)$ for $(X,f)\in\mathcal M_0$ and $g\in \O(\Lambda)$.
\end{proof} 

\bibliography{literatur}
\bibliographystyle{alpha}

\end{document}